\tikzset{every node/.style={align=center}}
\newcommand\cA{\mathcal{A}}
\newcommand\cB{\mathcal{B}}
\newcommand\cE{\mathcal{E}}
\newcommand\cF{\mathcal{F}}
\newcommand\cG{\mathcal{G}}
\newcommand\cH{\mathcal{H}}
\newcommand\cK{\mathcal{K}}
\newcommand\cL{\mathcal{L}}
\newcommand\cN{\mathcal{N}}
\newcommand\cO{\mathcal{O}}
\newcommand\cP{\mathcal{P}}
\newcommand\cQ{\mathcal{Q}}
\newcommand\cS{\mathcal{S}}
\newcommand\cT{\mathcal{T}}
\newcommand\cV{\mathcal{V}}
\newcommand\cX{\mathcal{X}}
\newcommand\bE{\mathbb{E}}
\newcommand\bN{\mathbb{N}}
\newcommand\bP{\mathbb{P}}
\newcommand\bQ{\mathbb{Q}}
\newcommand\bR{\mathbb{R}}
\def \ud{\mathrm{d}}
\def \e{\mathrm{e}}
\def \AA{\mathrm{Adm}}
\newcommand{\eps}{\varepsilon}
\newcommand{\ind}{\mathbf{1}}
\newtheorem{theorem}{Theorem}[section]
\newtheorem{lemma}[theorem]{Lemma}
\newtheorem{proposition}[theorem]{Proposition}
\newtheorem{remark}[theorem]{Remark}
\newtheorem{assumptions}[theorem]{Assumption}
\title[DPP for classical and singular control with discretionary stopping]{Dynamic programming principle \\ for classical and singular stochastic control \\ with discretionary stopping}
\author[T.\ De Angelis and A.\ Milazzo]{Tiziano De Angelis \and Alessandro Milazzo}
\subjclass[2020]{60G07, 60G40, 93E20, 49L20}
\keywords{dynamic programming principle, stochastic control, singular control, discretionary stopping}
\address{T.\ De Angelis: School of Management and Economics, Dept.\ ESOMAS, University of Torino, C.so Unione Sovietica 218bis, 10134, Torino, ITALY; Collegio Carlo Alberto, P.za Arbarello 8, 10122, Torino, ITALY.}
\email{\href{mailto:tiziano.deangelis@unito.it}{tiziano.deangelis@unito.it}}
\address{A.\ Milazzo: Department of Mathematics, Uppsala University, Box 480, 75106, Uppsala, SWEDEN.}
\email{\href{mailto:alessandro.milazzo@math.uu.se}{alessandro.milazzo@math.uu.se}}
\numberwithin{equation}{section}
\begin{document}
	
\begin{abstract}
We prove the dynamic programming principle (DPP) in a class of problems where an agent controls a $d$-dimensional diffusive dynamics via both classical and singular controls and, moreover, is able to terminate the optimisation at a time of her choosing, prior to a given maturity. The time-horizon of the problem is random and it is the smallest between a fixed terminal time and the first exit time of the state dynamics from a Borel set. We consider both the cases in which the total available fuel for the singular control is either bounded or unbounded. 

We build upon existing proofs of DPP and extend results available in the traditional literature on singular control (Haussmann and Suo, SIAM J.\ Control Optim., 33, 1995) by relaxing some key assumptions and including the discretionary stopping feature. We also connect with more general versions of the DPP (e.g., Bouchard and Touzi, SIAM J.\ Control Optim., 49, 2011) by showing in detail how our class of problems meets the abstract requirements therein.  
\end{abstract}
	
\maketitle
	
\section{Introduction}

In this paper, we prove the dynamic programming principle (DPP) for problems of the form:
\[
\sup_{\tau,\alpha,\xi}J(\tau,\alpha,\xi)
\]
where $\tau$ is a stopping time and $\alpha$ and $\xi$ are, respectively, a classical control and a singular control. The objective function $J(\tau,\alpha,\xi)$ is in the form of an expected reward that depends on the path of the stochastic dynamics of a controlled diffusion process $X^{\alpha,\xi}$ and on the amount of control exerted. The optimisation runs over a random time horizon which is determined as the smallest between a deterministic time $T$ and the first time the state-dynamics leaves a given Borel set $\cO$. Furthermore, we treat both the so-called finite- and infinite-fuel problem, meaning that the total amount of singular control that can be exerted is either capped or uncapped (see Karatzas \cite{karatzas1981monotone,karatzas1985probabilistic}).

The DPP is easy to state, and rather intuitive, and it quite literally forms the backbone of the whole stochastic (and deterministic) control theory. That is why it is important to develop a full understanding of all the mechanisms underpinning DPP and its rigorous mathematical proof across different classes of stochastic control problems. Moreover, DPP goes hand in hand with the theory of viscosity solutions for partial differential equations (see Crandall, Ishii and Lions \cite{crandall1992user}). In that respect, the DPP is the first step to prove that the value function of a stochastic control problem is a viscosity solution to a suitable (problem-specific) Hamilton Jacobi Bellman  (HJB) equation. The link to viscosity theory opens the door to the study of stochastic control problems via PDE methods that are often more versatile than the classical techniques based on Sobolev-type solutions (strong and weak). Notably, Bayraktar and Sirbu (see, e.g., \cite{bayraktar2012stochastic,bayraktar2013stochastic,sirbu2014stochastic,sirbu2015asymptotic}) developed an alternative approach to showing that the value functions of certain stochastic control problems/games are viscosity solutions of the corresponding HJB equations. Their approach does not require the DPP, which is instead obtained as a by-product. It does not seem to us that a direct application of those results is immediate in our set-up.  

Over the course of the past three decades mathematicians have increasingly come to realise that there are numerous subtleties hidden in a rigorous mathematical proof of the DPP. Perhaps the best known of such subtleties concerns the so-called `measurable selection' (see, e.g., Soner and Touzi \cite{soner2002dynamic} and references therein) which is needed in order to concatenate $\eps$-optimal controls starting from random initial conditions. That becomes problematic when the value function is only known to be measurable but it is not an issue when, for example, the value function is known to be continuous. Work by Bouchard and Touzi \cite{bouchard2011weak} and Bouchard and Nutz \cite{bouchard2012weak} develop a notion of {\em weak} DPP that overcomes the measurable selection problem without even requiring continuity of the value function (an extension of those ideas to the case of non-linear expectations is provided by Dumitrescu et al.\ in \cite{dumitrescu2016weak}). It is also worth mentioning an approach based on optimisation over families of probability measures, associated to controlled dynamics, on the space of c\`adl\`ag paths as in, e.g., El Karoui and Tan \cite{karoui2013capacities,el2013capacities} or Zitkovic \cite{zitkovic2014dynamic}. Further delicate technical problems arise from the use of {\em regular conditional probabilities} and the role played by null sets when changing the so-called reference probability system while following the trajectories of the controlled dynamics. Those difficulties are indicated in the monograph by Fabbri, Gozzi and Swiech \cite{fabbri2017stochastic} and in the work by Claisse, Talay and Tan \cite{claisse2016pseudo}, which we take as the main building blocks for our study. 

For an overview of classical results on the DPP we refer to traditional monographs on stochastic control (e.g., Krylov \cite{krylov2008controlled}, Fleming and Soner \cite{fleming2006controlled}, Yong and Zhou \cite{yong1999stochastic}) and to the references therein. Due to the singular control feature, our work is closely related to work by Haussmann and Suo \cite{haussmann1995singularExistence,haussmann1995singularDPP} who originally developed the DPP for singular control problems and their connection with viscosity solutions of suitable HJB equations. Ma and Yong \cite{ma1993regular, ma1999dynamic} extended the results by Haussmann and Suo to a more general set-up and gave sufficient conditions under which the value function of the problem is the {\em unique} continuous viscosity solution of a suitable HJB equation. In a one-dimensional setting Chiarolla \cite{chiarolla1997singular} obtained analogous results when the diffusion coefficient of the singularly controlled dynamics is not Lipschitz, which poses additional technical difficulties in proving continuity of the value function. Atar and Budhiraja \cite{atar2006singular} study DPP for state-constrained singular stochastic control problems and obtain that the value is the {\em unique} viscosity solution of the corresponding HJB equation. In particular, their controlled state process is a Brownian motion constrained to evolve inside a cone.

Our proof of the DPP encompasses a framework that is more general than in the papers from the paragraph above (except that we do not have a constraint on the controlled dynamics as in \cite{atar2006singular}). We include the discretionary stopping feature, the exit time from the domain $\cO$ and we allow the cost of exerting singular controls to be state-dependent. We also avoid making specific assumptions on the problem data, which other papers normally introduce as sufficient conditions for growth estimates and continuity of the value function. Instead we shift the focus to mild regularity of the objective function $J$ and pathwise uniqueness of the controlled dynamics (Assumptions \ref{ass:Xind} and \ref{ass:obj}). Of course our assumptions are implied by the more specific ones made, e.g., in \cite{ma1999dynamic}. Continuity of the value function is also not required but it is replaced by a weaker condition on the convergence of the expected values of suitable stochastic processes (Assumption \ref{ass:DCT}). Again, that requirement is satisfied if, for example, the value function can be shown to be continuous. Finally, we do not impose that the gain functions and costs appearing in the objective function $J$ have a sign.

Works in \cite{bouchard2012weak, bouchard2011weak,karoui2013capacities,zitkovic2014dynamic} provide the DPP under great generality. It seems reasonable to expect that a suitable adaptation and combination of the results and techniques in those papers would allow to devise a DPP in our setup. However, the task is highly non-trivial. The generality attained in \cite{bouchard2012weak, bouchard2011weak,karoui2013capacities,zitkovic2014dynamic} relies in part on abstract overarching assumptions, including concatenability of controls and stability under conditioning (in the language of \cite{karoui2013capacities}) that need to be verified on a case-by-case basis. Our work is self-contained and complements those results: we present a constructive approach based on probabilistic concepts and tools from the general theory of stochastic processes, under assumptions for which we also provide simple sufficient conditions with wide applicability. The overall philosophy in our paper is certainly inspired by \cite{fabbri2017stochastic} but the actual derivation of key technical results requires a different line of argument, due to the structural differences between our set-up and that in \cite{fabbri2017stochastic}.

The class of problems we consider encompasses modelling features that have already attracted sustained attention from the scientific community. For applications in singular control, it is interesting to consider exit times from a domain $\cO$ as they do appear, for example, in the famous optimal dividend problem (see, e.g., Jeanblanc and Shiryaev \cite{jeanblanc1995optimization}); in that context, $\cO$ is the solvency region. Discretionary stopping is also a desirable feature in several models addressed in the literature. In the case with only classical controls and without exit time from a domain we find work by Karatzas and Zamfirescu \cite{karatzas2006martingale} characterising martingale properties of the value process. In a similar setting but adopting relaxed controls (a notion close to that of randomised controls) we find work by Bassan and Ceci \cite{ceci2004mixed}, who prove that the value function is a viscosity solution of a suitable HJB equation. Explicit solutions in some particular problems of singular control with discretionary stopping over an infinite-time horizon are obtained by Davis and Zervos in \cite{davis1994problem} for one dimensional controlled dynamics. In a similar set-up, Morimoto \cite{morimoto2010singular} adds also an exit time of the controlled dynamics (a controlled geometric Brownian motion) from an interval of the real line. Using variational methods and penalisation techniques \cite{morimoto2010singular} proves that the value function solves a suitable HJB equation. A finite-fuel singular control problem with discretionary stopping and infinite-time horizon is solved by Karatzas et al.\ in \cite{karatzas2000finite} in closed form using free boundary methods applied to a parametric family of ordinary differential equations. Chen and Yi \cite{chen2012problem} use (parabolic) PDE methods and free boundary theory to solve a finite-time horizon problem of singular control with discretionary stopping with one dimensional controlled dynamics. Our contribution to this stream of the literature is to provide a rigorous derivation of the DPP, which was missing so far, in sufficient generality to cover all the models mentioned above and more.

The paper is organised as follows. In Section \ref{sec:notation} we collate some notation that will be used throughout the paper. In Section \ref{sec:problem} we set-up the problem and state standing assumptions. The main results of the paper are presented in Section \ref{sec:results} but their proofs are given in subsequent sections. In particular, in Section \ref{sec:ind} we prove independence of the value function of our problem from the reference probability system adopted; this leads to the equivalence of weak and strong formulation of the problem and to several useful equivalences in law of the controlled dynamics under different reference probability systems. In Section \ref{Sect:DPP} we use the technical results from Section \ref{sec:ind} to finally prove the DPP and the other results stated in Section \ref{sec:results}. The paper is completed by a technical appendix gathering useful results (largely known) on regular conditional probabilities.

\section{Notation and terminology}\label{sec:notation}

In this section we summarise notations and terminology adopted throughout the paper. While this is a useful compendium of symbols, it can be skipped at a first read as all concepts will be introduced in the paper when they first appear.
\begin{itemize}
\item[\bf(a)] $T>0$ is the time horizon, $d, d', l\in\bN$ denote dimensions of various state processes.\medskip
\item[\bf(b)] For a vector $x\in\bR^d$, we denote by $|x|$ its Euclidean norm. For $x,y\in\bR^d$, we denote by $\langle x,y\rangle=\sum_{i=1}^d x^iy^i$ the inner product. Given a set $A\in\bR^d$, we denote $A^c=\bR^d\setminus A$. For $z\in\bR$, we denote $(z)^\pm=\max\{0,\pm z\}$.
\medskip
\item[\bf(c)] For a Polish space $\Gamma$, we denote by $\cB(\Gamma)$ the Borel $\sigma$-algebra on $\Gamma$.\medskip
\item[\bf(d)] For a random variable $X:(\Omega,\cF)\to(\Gamma,\cS)$, defined on a probability space $(\Omega,\cF,\bP)$ and with values in a measurable space $(\Gamma,\cS)$, we denote by $\cL_\bP(X)$ its law. Notice that, in particular, $X$ could be a stochastic process (see, e.g., \cite[Page 24]{kallenberg2006foundations}).\medskip
\item[\bf(e)] For a bounded variation function $f:[0,T]\to\bR^d$ with $f=(f^1,\ldots,f^d)$, we denote by $f^\pm=(f^{1,\pm},\ldots,f^{d,\pm})$ the two components of its Jordan decomposition. Namely, for every $i=1,\ldots,d$, we have
\begin{align}
f^i(s)&=f^{i,+}(s)-f^{i,-}(s), \quad\text{with $f^{i,\pm}$ non-decreasing.}\label{JordanDecomp} 
\end{align}		
For each $i$, we denote by $V_{[t,s]}(f^i)$ the variation of $f^i$ on $[t,s]$:		 
\begin{align}
V_{[t,s]}(f^i)&=f^{i,+}(s)-f^{i,+}(t)+f^{i,-}(s)-f^{i,-}(t), \label{Variation} \qquad s\in[t,T].
\end{align}
Then, the variation of $f$ reads $V_{[t,s]}(f):=\sum_{i=1}^d V_{[t,s]}(f^i)$. \medskip
\item[\bf(f)] $M_{d,d'}$ is the collection of the $d\times d'$-dimensional real matrices.\medskip
\item[\bf(g)] $C([0,T];\bR^{d})$ is the collection of continuous functions $\varphi:[0,T]\mapsto\bR^d$.\medskip
\item[\bf(h)] For $t\in[0,T]$, a reference probability system starting at time $t$ is a 5-tuple 
\begin{equation*}
\nu:=(\Omega, \cF,\bP,\{\cF^t_s\}_{s\in[t,T]},W),
\end{equation*}
where:
\begin{itemize} 
\item[(i)] $(\Omega,\cF,\bP)$ is a complete probability space; 
\item[(ii)] $W=(W_s)_{s\in[t,T]}$ is a $d'$-dimensional Brownian motion on $(\Omega,\cF,\bP)$ starting at time $t$, i.e., $\bP(W_t=0)=1$; 
\item[(iii)] $\cF^{t,0}_s:=\sigma(W_u: u\in[t,s])$ and $\cF_s^t$ is the augmentation of $\cF^{t,0}_s$ with the $\bP$-null sets. 
\end{itemize}
The class $\cV_t$ contains all reference probability systems starting at time $t\in[0,T]$.
\medskip
\item[\bf(i)] We say that a reference probability system $\nu\in\cV_t$ is standard if there exists a $\sigma$-algebra $\cF^0$ such that $\cF^{t,0}_T\subseteq\cF^0\subseteq\cF$, where $\cF$ is the completion of $\cF^0$ with the $\bP$-null sets and $(\Omega,\cF^0)$ is a standard measurable space. Recall that a measurable space is standard if it is Borel isomorphic to, e.g., $(\bN,\cB(\bN))$ with $\cB(\bN)$ the Borel $\sigma$-algebra for the discrete topology.\medskip
\item[\bf(j)] For $t\in[0,T]$, the {\em canonical reference probability system} (starting at time $t$) is the 5-tuple 
\begin{equation*}
\nu^*:=(\Omega^*,\cF^*,\bP^*,\{\cB^t_s\}_{s\in[t,T]},W^*),
\end{equation*}
where:
\begin{itemize} 
\item[(i)] $\Omega^*:=\{\omega\in C([t,T];\bR^{d'}): \omega(t)=0\}$; 
\item[(ii)] $\bP^*$ is the Wiener measure on $(\Omega^*,\cB(\Omega^*))$ that makes the canonical process $(s,\omega)\mapsto W^*_s(\omega)=\omega(s)$ a Brownian motion starting at time $t$; 
\item[(iii)] $\cF^*$ is the completion of $\cB(\Omega^*)$ with the $\bP^*$-null sets; 
\item[(iv)] $\cB^{t,0}_s:=\sigma(W^*_u: u\in [t,s])$ and $\cB^t_s$ is the augmentation of $\cB^{t,0}_s$ with the $\bP^*$-null sets.
\end{itemize}\medskip
\item[\bf(k)] For a fixed $\nu\in\cV_t$, we denote by $\cA^\nu_t$ the collection of $\{\cF^t_s \}$-progressively measurable processes $\alpha=(\alpha_s)_{s\in[t,T]}$, taking values in a (possibly compact) subset $\cK\subseteq\bR^l$ ($l\in\bN$ as in (a)).\medskip
\item[\bf(l)] For a fixed $\nu\in\cV_t$ and a given $u\in[t,T]$, we denote by $\cX^\nu_u$ the collection of processes $\xi=(\xi_s)_{s\in[t,T]}$ such that
\begin{enumerate}[(i)]
\item $\xi$ is $\bR^d$-valued and $\{\cF^t_s\}$-adapted.
\item $\xi$ is left-continuous and of bounded variation $\bP$-a.s.
\item $\xi_{s}=0$ for every $s\in[t,u]$, $\bP$-a.s.
\item $\bE\big[|V_{[u,T]}(\xi)|^{p}\big]<\infty$, for some fixed $p>0$ (depending on the problem).
\end{enumerate}\medskip
Similarly, for a given random variable $\zeta\in[0,\infty)$, $\bP$-a.s., we denote by $\cX^\nu_u(\zeta)$ the class of finite-fuel controls, i.e., those for which condition (iv) above is replaced by \medskip
\begin{itemize}
	\item[(iv')] $\bP\big(V_{[u,T]}(\xi)\leq \zeta\big)=1$.
\end{itemize}\medskip
For $\xi$ belonging to either $\cX^{\nu}_u$ or $\cX^{\nu}_u(\zeta)$ its Jordan decomposition reads $\xi=\xi^+-\xi^-$, $\bP$-a.s.\medskip 
\item[\bf(m)] Fix $u\in[t,T]$ and $0\le z\le \bar z<\infty$. Given a control $\xi\in\cX^\nu_t(\bar z-z)$ and a $\cF^t_u$-measurable random variable $Z\ge z$, $\bP$-a.s., we set
\[
\sigma_Z:=\inf\{s\geq u: V_{[u,s]}(\xi)\geq \bar z- Z\}\wedge T
\] 
and define the truncation of $\xi^\pm$ at $Z$ (after time $u$) by $(\xi^\pm_{s\wedge\sigma_Z})_{s\in[t,T]}$. The increments after time $u$ of the truncated process are denoted by
\begin{equation*}
[\xi^\pm]^{u,Z}_s:=\xi^\pm_{s\wedge\sigma_Z}-\xi^\pm_u,\quad s\in[u,T].
\end{equation*}
\medskip
\item[\bf(n)] For a fixed $\nu\in\cV_t$ and a given $u\in[t,T]$, we denote by $\cT^\nu_u$ the collection of $\{\cF^t_s\}$-stopping times such that $u\leq\tau\leq T$, $\bP$-a.s.\medskip
\item[\bf(o)] For a fixed $\nu\in\cV_t$ and a given $u\in[t,T]$ and $z\in[ 0,\infty)$, we denote 
\[
	\AA^\nu_t=\cT^\nu_t\times\cA^\nu_t\times\cX^\nu_t\quad\text{and}\quad \AA^\nu_{t,z}=\cT^\nu_t\times\cA^\nu_t\times\cX^\nu_t(z).
\]
\item[\bf(p)] For a fixed $\nu\in\cV_t$ and a given $u\in[t,T]$, we denote by $\cE^\nu_u$ the collection of processes $\eta=(\eta_s)_{s\in[t,T]}$ such that
\begin{enumerate}[(i)]
\item $\eta$ is $\{\cF^t_s\}$-adapted.
\item $\eta$ is left-continuous and non-decreasing $\bP$-a.s.
\item $\eta_s\in\{0,1\}$ for every $s\in[t,T]$ and $\eta_s=0$ for every $s\in[t,u]$, $\bP$-a.s.
\end{enumerate}\medskip
\item[\bf (q)] For a fixed $\nu\in\cV_t$ we denote by $\cP_\Omega$ the $\sigma$-algebra of $\{\cF^{t,0}_s\}$-predictable sets. Recall that $\cP_\Omega$ is a $\sigma$-algebra on $[t,T]\times\Omega$ and it is generated by the sets of the form 
\[
(s,u]\times A\quad\text{with $t\leq s< u\leq T$ and $A\in\cF^{t,0}_{s-}$}
\] 
and of the form
\[
\{t\}\times A, \quad\text{with $A\in\cF^{t,0}_{t}$}.
\] 
Notice that by left-continuity of the raw Brownian filtration, we have $\cF^{t,0}_{s-}=\cF^{t,0}_{s}$ for every $s\in(t,T]$ (and by convention $\cF^{t,0}_{t-}:=\cF^{t,0}_t$). A process $Y:[t,T]\times\Omega\to\bR^d$ that is $\cP_\Omega/\cB(\bR^d)$-measurable is called $\{\cF^{t,0}_s\}$-predictable.
\end{itemize}

\section{Problem formulation}\label{sec:problem}
Let $p>0$, $d,d',l\in\bN$. Fix $t\in[0,T]$ and let $\cV_t$ be the collection of reference probability systems starting at time $t$. That is, $\nu\in\cV_t$ means:
\begin{equation*}
\nu=(\Omega, \cF,\bP,\{\cF^t_s\}_{s\in[t,T]},W),
\end{equation*}
where $(\Omega,\cF,\bP)$ is a complete probability space equipped with a $d'$-dimensional Brownian motion $W=(W_s)_{s\in[t,T]}$ starting at time $t$, i.e., $\bP(W_t=0)=1$, and $\cF_s^t$ is the augmentation of $\cF^{t,0}_s:=\sigma(W_u: u\in[t,s])$ with the $\bP$-null sets. 
With no loss of generality, we assume that $t\mapsto W_t(\omega)$ is continuous for all $\omega\in\Omega$. For completeness we should use the notation $W^t$ for the Brownian motion, in order to keep track of the starting condition $W^t_t=0$. We omit this notation here as the time $t$ will be fixed (but arbitrary) throughout the paper.

An admissible control-stopping treble $(\tau,\alpha,\xi)\in\cT^\nu_t\times\cA^\nu_t\times\cX^\nu_t$ in the reference probability system $\nu$ includes a stopping time $\tau$ and a pair of processes $(\alpha,\xi)$ such that:
\begin{itemize}
\item[(i)] $\tau$ is a $\{\cF^t_s \}$-stopping time such that $t\leq \tau\leq T$, $\bP$-a.s.
\item[(ii)] $\alpha=(\alpha_s)_{s\in[t,T]}$ is $\{\cF^t_s \}$-progressively measurable and taking values in a (possibly compact) subset $\cK\subseteq\bR^l$;
\item[(iii)] $\xi=(\xi_s)_{s\in[t,T]}$ is $\bR^d$-valued, $\{\cF^t_s\}$-adapted, left-continuous and of bounded variation $\bP$-a.s.~with $\xi_t=0$, $\bP$-a.s.~and such that $\bE\big[|V_{[t,T]}(\xi)|^p\big]<\infty$.
\end{itemize}
Here $V_{[t,T]}(\xi)$ is the (random) total variation of the process $\xi$ over the time interval $[t,T]$ defined as the sum of the variations of each coordinate $\xi^i$, $i=1,\ldots d$; that is,
\begin{align}\label{eq:var}
V_{[t,s]}(\xi):=\sum_{i=1}^d\big(\xi^{i,+}_s+\xi^{i,-}_s\big)=\sum_{i=1}^d \sup_{t\le r\le s}\big(\xi^{i,+}_r+\xi^{i,-}_r\big),
\end{align}
where $\xi^i=\xi^{i,+}-\xi^{i,-}$ is the Jordan decomposition of the $i$-th entry of the vector $\xi$ and we use monotonicity of $\xi^{i,\pm}$.

Alternatively, we can also consider control-stopping trebles $(\tau,\alpha,\xi)\in\cT^\nu_t\times\cA^\nu_t\times\cX^\nu_t(\zeta)$ where for a given random variable $\zeta\in[ 0,\infty)$, $\bP$-a.s., we denote by $\cX^\nu_t(\zeta)$ the class of finite-fuel controls, i.e., those satisfying (iii) above but when the condition $\bE\big[|V_{[t,T]}(\xi)|^p\big]<\infty$ is replaced by the stricter condition $\bP\big(V_{[t,T]}(\xi)\leq \zeta\big)=1$. Notice that, formally, $\cX^\nu_t\subset \cX^\nu_t(\infty)$. 

The results in this paper hold for both types of control pairs (finite and infinite fuel) and we will explicitly refer to small differences in the arguments of proof as needed. For future reference, given $u\in[t,T]$, we also introduce the subsets $\cX^\nu_u\subset\cX^\nu_t$ and $\cX^\nu_u(z)\subset\cX^\nu_t(z)$ of processes from (iii) above but  
such that $\bP(\text{$\xi_s=0$ for $s\in[t,u]$})=1$.

Given an admissible pair $(\alpha,\xi)$ the controlled state process for our problem follows the stochastic differential equation (SDE)
\begin{equation}\label{StateProcess}
X_s=x+\int_t^s\mu(r,X_r,\alpha_r)\ud r+\int_t^s \sigma(r,X_r,\alpha_r)\ud W_r+\xi_s,\:\: s\in[t,T], \quad x\in\bR^d,
\end{equation}
where $\mu:[0,T]\times\bR^d\times\cK\to\bR^d$, $\sigma:[0,T]\times\bR^d\times\cK\to M_{d,d'}$. We will assume that the SDE \eqref{StateProcess} admits a unique, caglad, $\{\cF^t_s\}$-adapted solution for any admissible pair $(\alpha,\xi)$, up to indistiguishibility (see Assumption \ref{ass:Xind}).

Sometimes it is convenient to denote the solution of \eqref{StateProcess} by $X^{t,x;\nu;\alpha,\xi}$ to highlight the dependence on the reference probability system $\nu$, the admissible controls $(\alpha,\xi)$ and the initial condition $(t,x)$. However, when no confusion shall arise we may also use the notations $X^{t,x;\alpha,\xi}$, $X^{t,x}$, $X^{\alpha,\xi}$ or simply $X$ depending on the circumstances. Similarly, we denote by $\rho^{t,x;\nu;\alpha,\xi}_\cO$ the first exit time of the process $X^{t,x;\nu;\alpha,\xi}$ from a domain $\cO\in\cB(\bR^d)$, i.e.,
\begin{equation}\label{eq:rho}
\rho^{t,x;\nu;\alpha,\xi}_\cO:=\inf\{s\geq t: X^{t,x;\nu;\alpha,\xi}_s\notin \cO \}\wedge T.
\end{equation}
When no confusion shall arise we may also use the simpler notations $\rho^{t,x;\alpha,\xi}_\cO$, $\rho^{\alpha,\xi}_\cO$ and $\rho_\cO$.
	
Given an initial condition $(t,x)\in[0,T]\times\bR^d$, a reference probability system $\nu\in\cV_t$ and an admissible treble $(\tau,\alpha,\xi)\in\cT^\nu_t\times\cA^\nu_t\times\cX^\nu_t$, the objective function in our problem reads 
\begin{align}\label{eq:J}
J^\nu_{t,x}(\tau,\alpha,\xi):=\bE\bigg[&\int_t^{\tau\wedge\rho_\cO} f(s,X_s,\alpha_s)\ud s-\int_{[t,\tau\wedge\rho_\cO)}\langle c_+(s,X_s),\ud\xi^+_s\rangle\nonumber \\
	&-\int_{[t,\tau\wedge\rho_\cO)}\langle c_-(s,X_s),\ud\xi^-_s\rangle +g_1(\rho_\cO,X_{\rho_\cO})\ind_{\{\rho_\cO\leq\tau \}}+g_2(\tau,X_\tau)\ind_{\{\rho_\cO>\tau \}} \bigg],
\end{align}
where $f:[0,T]\times\bR^d\times\cK\to\bR$ is a running gain, $g_1:[0,T]\times\bR^d\to\bR$ the terminal gain when the controlled process leaves the set $\cO$ prior to $\tau$, $g_2:[0,T]\times\bR^d\to\bR$ the terminal gain when $\tau$ occurs prior to $\rho_\cO$ and $c_\pm:[0,T]\times\bR^d\to\bR^d$ are the vectors of cost per unit of singular control exerted. The integrals with respect to the controls $\xi^\pm$ are Lebesgue-Stieltjes integrals and are understood as
\begin{equation*}
\int_{[t,\tau)}\langle c_\pm(s,X_s),\ud\xi^\pm_s\rangle:=\sum_{i=1}^d\int_{[t,\tau)} c^{i}_\pm(s,X_s)\ud\xi^{i,\pm}_s.
\end{equation*}
We allow for different costs $c_+$ and $c_-$ associated to the two increasing processes $\xi^+$ and $\xi^-$, respectively. For the sake of generality, $c_\pm$ take values in $\bR^d$ so that negative costs may be allowed too. 
\begin{remark}
It is worth emphasising that while in the case of state-independent costs, $c_\pm(t)$, there is a unique interpretation of the integrals above, in the state-dependent case, $c_{\pm}(t,x)$, the classical Lebesgue-Stieltjes integral is known to cause some technical problems with the use of Hamilton-Jacobi-Bellman (HJB) equations. This fact is well-illustrated in \cite{alvarez2000singular} where the absence of an admissible optimal control is demonstrated in a one-dimensional problem. This misalignment can be resolved, at least in the case when the cost of control is the same in all directions, i.e., $c^i_\pm=c^j_\pm=c$ for all $i, j$, by taking a different type of integral. Namely, for the singular control one should use the representation $\xi_t=\int_{[0,t)}n_s \ud |\xi|_s$, where $n_s\in\bR^d$ is a unitary vector, $(s,\omega)\mapsto n_s(\omega)$ is progressively measurable and $|\xi|_s(\omega)$ denotes the total variation of $\xi(\omega)$ on $[0,s]$. Then, the cost per unit of control exerted is defined as
\begin{equation}\label{eq:zhu}
\int_{[t,\tau)}c(s,X_s)\circ \ud\xi_s:=\int_{[t,\tau)} c(s,X_s)\ud|\xi^c|_s+\sum_{0\le s< \tau}\int^{|\xi|_{s+}}_{|\xi|_{s}}c(s,X_s+\lambda n_s)\ud \lambda,
\end{equation}
where $\xi^c$ is the continuous part of $\xi$. With this formulation, it is normally possible to connect the singular stochastic control problem with a HJB equation with gradient constraint (see, e.g., \cite{zhu1992generalized}). From the point of view of our analysis, the specific choice of the integral is irrelevant, as long as it is a measurable function of the paths of the controlled state process. So we avoid delving further into this matter as our results continue to hold under, for example, the specification in \eqref{eq:zhu}.
\end{remark}

Naturally, we assume that the functions $(s,y,a)\mapsto f(s,y,a)$, $(s,y)\mapsto c_\pm(s,y)$, $(s,y)\mapsto g_1(s,y)$ and $(s,y)\mapsto g_2(s,y)$ are Borel-measurable. 
	
The controller-stopper aims at maximising the objective function $J^\nu_{t,x}(\tau,\alpha,\xi)$ over all admissible trebles $(\tau,\alpha,\xi)$. To simplify the notation, we set
\[
\mathrm{Adm}^\nu_t=\cT^\nu_t\times\cA^\nu_t\times\cX^\nu_t.
\]
A priori there are two formulations of the problem: \medskip

{\em Strong formulation}. For fixed $(t,x)\in[0,T]\times\bR^d$ and $\nu\in\cV_t$, we define
\begin{equation}\label{StrongFormulation}
w^\nu(t,x):=\sup_{(\tau,\alpha,\xi)\in\AA^\nu_t}J^\nu_{t,x}(\tau,\alpha,\xi).
\end{equation}

{\em Weak formulation}. For fixed $(t,x)\in[0,T]\times\bR^d$, we define
\begin{equation}\label{WeakFormulation}
v(t,x):=\sup_{\nu\in\cV_t} \: \sup_{(\tau,\alpha,\xi)\in\AA^\nu_t}J^\nu_{t,x}(\tau,\alpha,\xi)=\sup_{\nu\in\cV_t}w^\nu(t,x). 
\end{equation}

In the case of {\em finite fuel}, we must fix the total fuel $\bar z\in[0,\infty)$ and add one state variable to the problem that accounts for the remaining fuel at each moment in time. That means that we will consider the state process $(s,X^{\alpha,\xi}_s,Z^\xi_s)_{s\in[t,T]}$ with the additional dynamics
\begin{equation}\label{eq:Z}
Z^\xi_s=Z^{t,z;\xi}_s:=z+V_{[t,s]}(\xi), \quad s\in[t,T], \: \xi\in\cX^\nu_t(\bar{z}-z),
\end{equation}
where $z\in[0,\bar z]$. Analogously to $X$, we may use the simpler notations $Z^{t,z;\xi}$ and $Z$ when no confusion would arise.
The objective function may be taken as in \eqref{eq:J} or it may also depend on the total amount of fuel exerted. In the latter case, we denote it by 
\begin{align}\label{eq:JZ}
J^\nu_{t,x,z}(\tau,\alpha,\xi):=\bE\bigg[&\int_t^{\tau\wedge\rho_\cO} f(s,X_s,Z_s,\alpha_s)\ud s-\int_{[t,\tau\wedge\rho_\cO)}\langle c_+(s,X_s),\ud\xi^+_s\rangle\nonumber \\
&-\int_{[t,\tau\wedge\rho_\cO)}\langle c_-(s,X_s),\ud\xi^-_s\rangle +g_1(\rho_\cO,X_{\rho_\cO},Z_{\rho_\cO})\ind_{\{\rho_\cO\leq\tau \}}+g_2(\tau,X_\tau,Z_{\tau})\ind_{\{\rho_\cO>\tau \}} \bigg],
\end{align}
where now $\rho_\cO=\inf\{s\ge t: (X^{\alpha,\xi}_s,Z^\xi_s)\notin\cO\}\wedge T$ for some $\cO\subset \bR^d\times[0,\bar z]$ and
with obvious changes to the domains of the functions $f$, $g_1$ and $g_2$. It will be completely clear from our analysis below that our results hold if we take an even more general form of the cost per unit control exerted, i.e., $c_{\pm}(s,X_s,Z_s)$. We refrain from adding that extension to avoid further notational complexity. Instead, to simplify the notation, we set
\[
\AA^\nu_{t,\bar z-z}=\cT^\nu_t\times\cA^\nu_t\times\cX^\nu_t(\bar z-z).
\]
The value functions in the two formulations with finite fuel read as follows:\medskip

{\em Strong formulation}. For fixed $(t,x,z)\in[0,T]\times\bR^d\times[0,\bar z]$ and $\nu\in\cV_t$, we define
	\begin{equation}\label{SFff}
	w^\nu(t,x,z):=\sup_{(\tau,\alpha,\xi)\in\AA^\nu_{t,\bar z-z}}J^\nu_{t,x,z}(\tau,\alpha,\xi).
	\end{equation}

{\em Weak formulation}. For fixed $(t,x,z)\in[0,T]\times\bR^d\times[0,\bar z]$, we define
\begin{equation}\label{WFff}
v(t,x,z):=\sup_{\nu\in\cV_t} \: \sup_{(\tau,\alpha,\xi)\in\AA^\nu_{t,\bar z-z}}J^\nu_{t,x,z}(\tau,\alpha,\xi)=\sup_{\nu\in\cV_t}w^\nu(t,x,z). 
\end{equation}
Notice that $\xi\in \cX^\nu_t(\bar z-z)$ implies $\bP(V_{[t,T]}(\xi)\le \bar z-z)=\bP(Z^{t,z;\xi}_T\le \bar z)=1$.\medskip

As it will be shown, weak and strong formulations (both for finite and infinite fuel) are in fact identical. The dual approach is, however, essential in order to prove the DPP (see Remark \ref{Rmk:PseudoMarkov}).
\begin{remark}
We adopt the same terminology as in \cite[Section 2.1]{fabbri2017stochastic}. The term ``{\em weak}'' in \eqref{WeakFormulation} and \eqref{WFff} refers to the fact that the reference probability system can vary together with the controls whereas in the ``{\em strong}'' formulation $\nu$ is fixed (see \eqref{StrongFormulation} and \eqref{SFff}). 
\end{remark}
	
\begin{remark}
Our setup is well suited to cover also the following situations:
\begin{enumerate}[(a)]
\item Controls that do not operate in all directions, i.e., $\xi^i\equiv 0$ for some $i=1,2,\ldots,d$.
\item Monotone controls, i.e., either $\xi^+\equiv 0$ or $\xi^-\equiv 0$.
\end{enumerate}
We can accommodate fully degenerate controlled diffusions. In particular, this allows to consider a state-dependent discount factor in the definition of the objective function $J_{t,x}^\nu$ and $J_{t,x,z}^\nu$ by taking, e.g., 
\[
X^d_s=\int_t^s r(u,X^1_u,\ldots, X^{d-1}_u)\ud u
\]
and
\[
f(t,x,\alpha)=\e^{-x^d}\bar f(t,x^1,\ldots , x^{d-1},\alpha),\:\:g_i(t,x,\alpha)=\e^{-x^d}\bar g_i(t,x^1,\ldots , x^{d-1},\alpha),
\]
for some functions $\bar f$, $\bar g_i$, $i=1,2$ (and analogously for the costs $c_\pm$).
\end{remark}
	
Throughout the paper we make a number of standing assumptions that simplify the exposition. Such assumptions concern mainly the controlled dynamics and the objective function and can be checked on a case-by-case basis in practical applications. In particular, ideas contained in \cite{ma1993regular} for singular control problems can be easily adapted to our setting and, more specifically, \cite[Ch.\ III, Sec.\ 9]{milazzo2021} contains mild sufficient conditions for problems of singular control with discretionary stopping. Assumptions \ref{ass:Xind}--\ref{ass:value} below hold throughout the paper, whereas Assumption \ref{ass:DCT} is only needed in Theorems \ref{thm:DPPst} and \ref{thm:DPPsttff}.
 
\begin{assumptions}[{\bf Indistinguishability}]\label{ass:Xind}
For every admissible pair $(\alpha,\xi)$, the SDE \eqref{StateProcess} admits a unique $\{\cF^t_s \}$-adapted solution, up to indistiguishibility.
\end{assumptions}

When $V_[t,T](\xi)$ is $p$-integrable with $p\geq 2$ and $\mu$ and $\sigma$ are Lipschitz-continuou, Assumption \ref{ass:Xind} holds by standard SDE techniques. For a more general case, when no integrability on $\xi$ is assumed, one can use results from \cite{doleans1976existence}. Assumption \ref{ass:Xind} also yields the following simple lemma.

\begin{lemma}\label{lemma:Xind}
	Fix $(t,x,z)\in[0,T]\times\bR^d\times[0,\bar z]$ and $\nu\in\cV_t$. Let $\xi^1,\xi^2\in\cX^\nu_t$ (or $\xi^1,\xi^2\in\cX^\nu_t(\bar z)$) and $\alpha^1,\alpha^2\in\cA^\nu_t$ be such that $\xi^1$ and $\xi^2$ are indistinguishable and $\alpha^1=\alpha^2$, $\ud s\times\bP$-a.e. Then,
	\begin{equation}
	\bP\big(X^{\alpha^1,\xi^1}_s=X^{\alpha^2,\xi^2}_s, \: \forall \: s\in[t,T]\big)=1.
	\end{equation}
\end{lemma}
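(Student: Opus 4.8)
The plan is to show that the process $X^{\alpha^2,\xi^2}$ is itself a solution of the SDE \eqref{StateProcess} driven by the data $(\alpha^1,\xi^1)$, and then to conclude via the uniqueness up to indistinguishability granted by Assumption \ref{ass:Xind}.

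First I would note that, being $\{\cF^t_s\}$-adapted and caglad, the process $X^{\alpha^2,\xi^2}$ is progressively measurable; combined with Borel-measurability of $\mu$ and $\sigma$ and progressive measurability of $\alpha^1$ and $\alpha^2$, this makes $(r,\omega)\mapsto\mu(r,X^{\alpha^2,\xi^2}_r(\omega),\alpha^j_r(\omega))$ and $(r,\omega)\mapsto\sigma(r,X^{\alpha^2,\xi^2}_r(\omega),\alpha^j_r(\omega))$ progressively measurable for $j=1,2$. Next, from $\alpha^1=\alpha^2$ $\ud s\times\bP$-a.e.\ and Fubini's theorem, there is a $\bP$-full event on which $\alpha^1_r=\alpha^2_r$ for Lebesgue-a.e.\ $r\in[t,T]$; on that event the two drift integrands coincide for a.e.\ $r$, whence the Lebesgue integrals $\int_t^s\mu(r,X^{\alpha^2,\xi^2}_r,\alpha^1_r)\,\ud r$ and $\int_t^s\mu(r,X^{\alpha^2,\xi^2}_r,\alpha^2_r)\,\ud r$ agree for all $s\in[t,T]$, $\bP$-a.s. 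Likewise, the two diffusion integrands agree $\ud s\times\bP$-a.e., so their stochastic integrals against $W$ are indistinguishable, because the It\^o integral is insensitive to modification of the integrand on a $\ud s\times\bP$-null set. Finally, indistinguishability of $\xi^1$ and $\xi^2$ gives $\xi^1_s=\xi^2_s$ for all $s\in[t,T]$, $\bP$-a.s.

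Putting these three facts together with the equation satisfied by $X^{\alpha^2,\xi^2}$ under $(\alpha^2,\xi^2)$, one gets that, $\bP$-a.s.,
\[
X^{\alpha^2,\xi^2}_s=x+\int_t^s\mu(r,X^{\alpha^2,\xi^2}_r,\alpha^1_r)\,\ud r+\int_t^s\sigma(r,X^{\alpha^2,\xi^2}_r,\alpha^1_r)\,\ud W_r+\xi^1_s\quad\text{for all }s\in[t,T],
\]
so $X^{\alpha^2,\xi^2}$ is an $\{\cF^t_s\}$-adapted solution of \eqref{StateProcess} associated with the admissible pair $(\alpha^1,\xi^1)$. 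Assumption \ref{ass:Xind} then forces $X^{\alpha^2,\xi^2}$ and $X^{\alpha^1,\xi^1}$ to be indistinguishable, which is exactly the claim; the argument is identical whether $\xi^1,\xi^2\in\cX^\nu_t$ or $\xi^1,\xi^2\in\cX^\nu_t(\bar z)$.

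There is no real obstacle here: the only point deserving a line of care is the insensitivity of the stochastic integral to a modification of the integrand on a $\ud s\times\bP$-null set, which is classical and is precisely where the hypothesis that $\alpha^1=\alpha^2$ holds only $\ud s\times\bP$-a.e.\ (rather than everywhere) is genuinely used.
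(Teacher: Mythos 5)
Your proof is correct and takes essentially the same approach as the paper: you show that one of the two processes solves the SDE driven by the other data pair (the paper writes the same chain of equalities with the roles of the superscripts $1$ and $2$ swapped) and then invokes Assumption \ref{ass:Xind}. Your version is simply more explicit about why the Lebesgue, stochastic, and $\xi$ terms coincide, which the paper compresses into a single sentence.
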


\begin{proof}
	With probability one
	\begin{align*}
	X^{\alpha^1,\xi^1}_s&=x+\int_t^s\mu(r,X^{\alpha^1,\xi^1}_r,\alpha^1_r)\ud r+\int_t^s\sigma(r,X^{\alpha^1,\xi^1},\alpha^1_r)\ud W_r+\xi^1_s\\
	&=x+\int_t^s\mu(r,X^{\alpha^1,\xi^1}_r,\alpha^2_r)\ud r+\int_t^s\sigma(r,X^{\alpha^1,\xi^1},\alpha^2_r)\ud W_r+\xi^2_s, 
	\end{align*}
	for all $s\in[t,T]$, where the second equality holds because $\xi^1$ and $\xi^2$ are indistinguishable and $\alpha^1=\alpha^2$, $\ud s\times\bP$-a.e. Thus, $X^{\alpha^1,\xi^1}$ verifies the same SDE as $X^{\alpha^2,\xi^2}$ on $\nu$ and so, by Assumption \ref{ass:Xind}, $X^{\alpha^1,\xi^1}$ and $X^{\alpha^2,\xi^2}$ are indistinguishable.
\end{proof}

For concreteness, we assume some integrability of the reward/cost functions appearing in the objective of our optimisation. Given an admissible treble $(\tau,\alpha,\xi)$ on a reference probability system $\nu$, we introduce the process
\begin{align}\label{eq:N}
N^{\nu;\alpha,\xi}_{u\wedge\tau}:=&\int_t^{u\wedge\tau\wedge\rho_\cO}\!\! f(s,X_s,Z_s,\alpha_s)\ud s-\int_{[t,u\wedge\tau\wedge\rho_\cO)}\langle c_+(s,X_s),\ud\xi^+_s\rangle-\int_{[t,u\wedge\tau\wedge\rho_\cO)}\langle c_-(s,X_s),\ud\xi^-_s\rangle\nonumber \\
&+g_1(\rho_\cO,X_{\rho_\cO},Z_{\rho_\cO})\ind_{\{\rho_\cO\leq\tau \}\cap\{\rho_\cO<u\}}+g_2(\tau,X_\tau,Z_{\tau})\ind_{\{\tau<u\wedge\rho_\cO\}}, 
\end{align}
with $(X,Z)=(X^{t,x;\alpha,\xi},Z^{t,z;\xi})$. An analogous process is clearly defined for the infinite-fuel problem, by dropping the dependence on $Z$. We also denote $(x)^\pm:=\max\{0,\pm x\}$.

\begin{assumptions}[{\bf Objective function I}]\label{ass:obj0} 
There is a constant $\bar g>0$ such that for $i=1,2$ it holds $g_i(t,x,z)\ge -\bar g$ for all $(t,x,z)$. Moreover, for any $(t,x,z)$ and any admissible treble $(\tau,\alpha,\xi)$ in a reference probability system $\nu$, we have $\bE[(N^{\nu;\alpha,\xi}_{u\wedge\tau})^-]<\infty$ for any $u\in[t,T]$.
\end{assumptions}
Integrability of $(N_u)^-$ is a mild requirement. It is immediately satisfied when, for example, the total variation of $\xi$ is at least $p$-integrable with $p\geq 1$, $f$ is bounded from below and $ c_\pm$ are bounded from above. If the coefficients of the SDE \eqref{StateProcess} are Lipschitz, then when $p\geq 2$ it is enough to assume that $|f(t,x,z,\alpha)|+|g_1(t,x,z)|+|g_2(t,x,z)|\le c(1+|x|^2+|z|^2)$ for some $c>0$ and $c_\pm$ bounded from above.

We also impose some continuity on the objective function. This is easy to state for infinite-fuel problems but it requires an additional notion of truncated controls in the case of finite-fuel problems. Fix $u\in[t,T]$ and $0\le z\le \bar z<\infty$. Given a control $\xi\in\cX^\nu_t(\bar z-z)$ and a $\cF^t_u$-measurable random variable $Z\in [z,\bar{z}]$, $\bP$-a.s., we set
\[
\sigma_Z:=\inf\{s\geq u: V_{[u,s]}(\xi)\geq \bar z- Z\}\wedge T
\] 
and define the truncation of $\xi^\pm$ at $Z$ (after time $u$) by $(\xi^\pm_{s\wedge\sigma_Z})_{s\in[u,T]}$.
Increments of the truncated controls read $[\xi^\pm]^{u,Z}_s:=\xi^\pm_{s\wedge\sigma_Z}-\xi^\pm_u$ for $s\in[u,T]$ and setting $[\xi]^{u,Z}_s:=[\xi^+]^{u,Z}_s-[\xi^-]^{u,Z}_s$, we have $[\xi]^{u,Z}\in\cX^\nu_u(\bar z-Z)$ by construction. By convention, to simplify the notation, we drop the superscript $u$ in $[\xi]^{u,Z}$ and we write $[\xi]^Z$ if $\xi\in\cX^\nu_u(\tilde{z})$ for some $\tilde{z}\in(0,\infty)$.

\begin{assumptions}[{\bf Objective function II}]\label{ass:obj}
Let $u\in[0,T]$ be arbitrarily fixed.
\begin{itemize}
\item {\em (Infinite fuel)} The mapping $y\mapsto J^{\nu}_{u,y}(\tau,\alpha,\xi)$ is continuous on $\bR^d$ uniformly in $(\tau,\alpha,\xi)\in\AA^{\nu}_u$ and uniformly with respect to $\nu\in\cV_u$. 
\item 
{\em (Finite fuel)} For every $(x_1,z_1),(x_2,z_2)\in\bR^d\times[0,\bar z]$ with $z_2\geq z_1$ and every $\eps>0$ there exists $\delta>0$ such that, if $|(x_1,z_1)-(x_2,z_2)|<\delta$, then 
\begin{equation}\label{ContFinFuel}
|J^{\nu}_{u,x_1,z_1}(\tau,\alpha,\xi)-J^{\nu}_{u,x_2,z_2}(\tau,\alpha,[\xi]^{z_2})|<\eps,
\end{equation}
for every $(\tau,\alpha,\xi)\in\AA^{\nu}_{u, \bar z-z_1}$ and all $\nu\in\cV_u$.
\end{itemize}
\end{assumptions}

The continuity requirements for the objective function can be easily verified in the infinite-fuel case when the coefficients in the SDE \eqref{StateProcess} are Lipschitz continuous, the functions $f$, $g_1$ and $g_2$ are, for example, H\"older continuous and $c_\pm$ are functions of time only (see \cite[Proposition III.9.5]{milazzo2021}). In the finite-fuel case, the uniform bound on the total variation of the singular controls allows to prove these continuity requirements (by a similar argument) also if $c_\pm$ depend on the space variable and are, e.g., H\"older continuous. 
The next assumption is a minimal technical assumption on measurability and finiteness of the problem's value function in its weak formulation, that guarantees well-posedness of the optimisation problem.
\begin{assumptions}[{\bf Value function}]\label{ass:value} 
The function $(t,x)\mapsto v(t,x)$ is Borel-measurable with $v(t,x)<\infty$ for every $(t,x)\in[0,T]\times\bR^d$. Analogously, for the finite fuel set-up, $(t,x,z)\mapsto v(t,x,z)$ is Borel-measurable with $v(t,x,z)<\infty$ for every $(t,x,z)\in[0,T]\times\bR^d\times[0,\bar z]$. 
\end{assumptions}		

It is not difficult to check that in the infinite-fuel case Assumptions \ref{ass:obj} and \ref{ass:value} imply that for each $u\in[t,T]$, given any $\eps>0$ there exists $\delta>0$ such that
\begin{align}\label{eq:vc1}
|v(u,x)-v(u,y)|<\eps,\quad\text{for $|x-y|<\delta$.} 
\end{align}
Analogously for the finite-fuel case, for each $u\in[t,T]$ and any $\eps>0$, there exists $\delta>0$ such that 
\begin{align}\label{eq:vc2}
|v(u,x_1,z_1)-v(u,x_2,z_2)|<\eps,\quad\text{for $|(x_1,z_1)-(x_2,z_2)|<\delta$, with $z_2\ge z_1$.} 
\end{align}
Here, notice that the inequality $v(u,x_1,x_2)\leq v(u,x_2,z_2)+\eps$ follows easily from Assumption \ref{ass:obj} since $z_2\geq z_1$, whereas the opposite inequality is slightly more delicate and can be shown as follows. By Assumption \ref{ass:obj}, we have that if $|(x_1,z_1)-(x_2,z_2)|<\delta$ then
	$$J^\nu_{u,x_2,z_2}(\tau,\alpha,[\xi]^{z_2})\leq J^\nu_{u,x_1,z_1}(\tau,\alpha,\xi)+\eps\leq v(u,x_1,z_1)+\eps,$$
	for every $(\tau,\alpha,\xi)\in\AA^\nu_{u,\bar{z}-z_1}$. Since $z_1\leq z_2$, then $\AA^\nu_{u,\bar{z}-z_2}\subseteq\AA^\nu_{u,\bar{z}-z_1}$. Moreover, if $\xi\in\cX^\nu_{\bar{z}-z_2}$ then $[\xi]^{z_2}=\xi$ and thus we obtain
	$$J^\nu_{u,x_2,z_2}(\tau,\alpha,\xi)\leq v(u,x_1,z_1)+\eps,$$
	for every $(\tau,\alpha,\xi)\in\AA^\nu_{u,\bar{z}-z_2}$. Hence, by taking the supremum,
	$$v(u,x_2,z_2)\leq v(u,x_1,z_1)+\eps.$$

For our last assumption, given an admissible treble $(\tau,\alpha,\xi)$ let the process $M\!=\!(M^{\nu;\alpha,\xi}_{u\wedge \tau})_{u\in[t,T]}$ be defined as
\begin{align}\label{eq:M}
M_{u\wedge\tau}&:=\!\int_t^{u\wedge\tau\wedge\rho_\cO}\!\! f(s,X_s,Z_s,\alpha_s)\ud s\!-\!\int_{[t,u\wedge\tau\wedge\rho_\cO)}\!\!\langle c_+(s,X_s),\ud\xi^+_s\rangle\!-\! \int_{[t,u\wedge\tau\wedge\rho_\cO)}\!\langle c_-(s,X_s),\ud\xi^-_s\rangle\nonumber\\
&\hspace{17pt}+g_1(\rho_\cO,X_{\rho_\cO},Z_{\rho_\cO})\ind_{\{\rho_\cO\leq\tau\}\cap\{ \rho_\cO<u \}}+g_2(\tau,X_\tau,Z_\tau)\ind_{\{\tau< u\wedge\rho_\cO \}}+v(u,X_u,Z_u)\ind_{\{\tau\wedge\rho_\cO\geq u\}},
\end{align}
with $(X,Z)=(X^{\nu;\alpha,\xi},Z^{\nu;\xi})$. Notice that $M_u=N_u+v(u,X_u,Z_u)$ on the event $\{\tau\wedge\rho_\cO\geq u\}$.
An analogous definition holds in the infinite-fuel problem if we drop the dependence on $Z$.

\begin{assumptions}[{\bf Convergence of} $M$]\label{ass:DCT} 
	Let $(t,x,z)\in[0,T]\times\bR^d\times[0,\bar{z}]$, $\nu\in\cV_t$ be an arbitrary reference probability system and $(\tau,\alpha,\xi)\in\AA^\nu_{t,\bar{z}-z}$. Let $\sigma\in\cT_t^\nu$ be arbitrary and $\{\sigma_n \}_{n\in\bN}\subseteq \cT^\nu_t$ be a sequence of stopping times such that $\sigma_n \uparrow \sigma$ as $n\to\infty$, $\bP$-a.s.\ on $\{\sigma>t \}$. Then,
	\begin{equation}\label{DCTcondition}
	\lim_{n\to\infty}\bE\big[M^{\nu;\alpha,\xi}_{\sigma_n\wedge\tau}\big]=\bE\big[M^{\nu;\alpha,\xi}_{\sigma\wedge\tau}\big],
	\end{equation}
	and an analogous condition holds for the infinite-fuel case.
\end{assumptions}

Recalling that $s\mapsto(X_s,Z_s)$ is left-continuous $\bP$-a.s., the convergence in \eqref{DCTcondition} can be obtained if, e.g., $s\mapsto v(s,X_s,Z_s)$ is also left-continuous and if the dominated convergence theorem applies. Continuity of the value function and suitable growth estimates are therefore sufficient. 

Continuity of $v$ is generally satisfied when, for example, the coefficients in the SDE \eqref{StateProcess} are Lipschitz continuous, the functions $f$, $g_1$ and $g_2$ are H\"older continuous and the costs $c_\pm$ depend continuously on time only (see Corollary III.9.8 and Proposition III.9.10 in \cite{milazzo2021} for the infinite-fuel case when $p\geq 2$; the finite-fuel case is analogous. See also \cite[Theorem 3.3]{ma1993regular} for singular control problems with both finite- and infinite-fuel). 

As for the growth estimates, dominated convergence theorem can be applied in the finite-fuel case if, e.g., $f$, $g_1$, $g_2$ have linear growth and $c_\pm$ are bounded (thanks also to standard SDE estimates). In the infinite-fuel case, when the total variation of $\xi$ is $p$-integrable with $p\geq 1$, the functions $f$, $g_1$, $g_2$ are bounded from above and $c_\pm$ are bounded from below (as assumed in \cite{ma1993regular}) the resulting value function is bounded.

\section{Main results}\label{sec:results}

Under the standing Assumptions \ref{ass:Xind}--\ref{ass:value} we obtain the following versions of the dynamic programming principle (DPP), which are the main results in this paper. The proofs are distilled in Section \ref{Sect:DPP} and build upon a series of technical lemmas and propositions. 

The first two theorems state the DPP for deterministic times in both the infinite- and finite-fuel setting, respectively.
\begin{theorem}[{\bf Infinite fuel}]\label{thm:DPPdet}
Fix $(t,x)\in[0,T]\times\bR^d$ and $\nu\in\cV_t$. For any $u\in[t,T]$, we have
\begin{align*}
v(t,x)=\sup_{(\tau,\alpha,\xi)\in\AA^\nu_t} \bE\bigg[&\!\int_t^{u\wedge\tau\wedge\rho_\cO}\!\!\! f(s,X_s,\alpha_s)\ud s\!-\!\int_{[t,u\wedge\tau\wedge\rho_\cO)}\!\Big(\langle c_+(s,X_s),\ud\xi^+_s\rangle\! +\!\langle c_-(s,X_s),\ud\xi^-_s\rangle\Big)\nonumber \\ 
&\!+\!g_1(\rho_\cO,X_{\rho_\cO})\ind_{\{\rho_\cO\leq\tau\}\cap\{ \rho_\cO<u \}}\!+\!g_2(\tau,X_\tau)\ind_{\{\tau< u\wedge\rho_\cO \}}\!+\!v(u,X_u)\ind_{\{\tau\wedge\rho_\cO\geq u\}} \bigg].
\end{align*}
\end{theorem}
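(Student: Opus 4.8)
\emph{Plan.} The plan is to establish the two inequalities ``$\le$'' and ``$\ge$'' separately. Throughout fix $(t,x)\in[0,T]\times\bR^d$ and $\nu\in\cV_t$, write $\bar v(t,x)$ for the right-hand side, and recall from \eqref{eq:N} and \eqref{eq:M} (with the extra variable $Z$ suppressed, since we are in the infinite-fuel case) that, for any admissible treble $(\tau,\alpha,\xi)\in\AA^\nu_t$, one has $M^{\nu;\alpha,\xi}_{u\wedge\tau}=N^{\nu;\alpha,\xi}_{u\wedge\tau}+v(u,X_u)\ind_{\{\tau\wedge\rho_\cO\ge u\}}$ and that $\bar v(t,x)=\sup_{(\tau,\alpha,\xi)\in\AA^\nu_t}\bE\big[M^{\nu;\alpha,\xi}_{u\wedge\tau}\big]$. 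The integrability in Assumptions \ref{ass:obj0} and \ref{ass:value}, together with the elementary lower bound $v\ge-\bar g$ (obtained by stopping immediately), ensures that all expectations and conditional expectations below are well defined. A preliminary observation, obtained by applying the results of Section \ref{sec:ind} to the auxiliary optimisation whose reward is the $M$-integrand above, is that $\bar v(t,x)$ is independent of the chosen $\nu$; hence we assume without loss of generality that $\nu=\nu^*$, the canonical reference system.

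\emph{Step 1 ($v\le\bar v$).} Fix $(\tau,\alpha,\xi)\in\AA^\nu_t$ and split the objective at the deterministic time $u$: $J^\nu_{t,x}(\tau,\alpha,\xi)=\bE\big[N^{\nu;\alpha,\xi}_{u\wedge\tau}\big]+\bE\big[\Delta\,\ind_{\{\tau\wedge\rho_\cO\ge u\}}\big]$, where $\Delta$ denotes the running gain, the control costs and the terminal gain accrued strictly after time $u$. Conditioning on $\cF^t_u$, and using a regular conditional probability of $\bP$ given $\cF^t_u$ (appendix) together with pathwise uniqueness of \eqref{StateProcess} from Assumption \ref{ass:Xind}, one identifies, for $\bP$-a.e.\ $\omega$ on $\{\tau\wedge\rho_\cO\ge u\}$, the conditional expectation $\bE[\Delta\mid\cF^t_u](\omega)$ with the value $J^{\nu_\omega}_{u,X_u(\omega)}$ of an admissible treble in a reference system $\nu_\omega$ starting at $u$ (namely the shifted controls $(\alpha_{u+\cdot},\xi_{u+\cdot}-\xi_u)$ and the state process $X_{u+\cdot}$ started at $X_u$). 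Hence $\bE[\Delta\mid\cF^t_u]\le v(u,X_u)$ $\bP$-a.s.\ on $\{\tau\wedge\rho_\cO\ge u\}$ by definition of the weak value \eqref{WeakFormulation}, so that $J^\nu_{t,x}(\tau,\alpha,\xi)\le\bE\big[M^{\nu;\alpha,\xi}_{u\wedge\tau}\big]\le\bar v(t,x)$. Taking the supremum over $(\tau,\alpha,\xi)$ and then over $\nu$ gives $v(t,x)\le\bar v(t,x)$.

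\emph{Step 2 ($\bar v\le v$).} Fix $\eps>0$ and $(\tau,\alpha,\xi)\in\AA^\nu_t$. By \eqref{eq:vc1} pick $\delta>0$ with $|v(u,x')-v(u,y')|<\eps$ for $|x'-y'|<\delta$, let $\{B_i\}_{i\in\bN}$ be a Borel partition of $\bR^d$ into sets of diameter $<\delta$, choose $y_i\in B_i$, and choose an admissible treble $(\tau^i,\alpha^i,\xi^i)$ on the canonical reference system starting at $u$ that is $\eps$-optimal for the problem started at $(u,y_i)$ (possible thanks to the equality of weak and strong values from Section \ref{sec:ind}). By the continuity of the objective in the initial datum that is uniform in $(\tau,\alpha,\xi)$ and $\nu$ (Assumption \ref{ass:obj}) and by \eqref{eq:vc1}, on $\{X_u\in B_i\}$ one has $J^\nu_{u,X_u}(\tau^i,\alpha^i,\xi^i)\ge J^\nu_{u,y_i}(\tau^i,\alpha^i,\xi^i)-\eps\ge v(u,y_i)-2\eps\ge v(u,X_u)-3\eps$. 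Now concatenate: define a treble $(\hat\tau,\hat\alpha,\hat\xi)$ that coincides with $(\tau,\alpha,\xi)$ on $[t,u]$ and, on the $\cF^t_u$-measurable event $\{\tau\wedge\rho_\cO\ge u\}\cap\{X_u\in B_i\}$, follows after time $u$ the $i$-th treble shifted so as to see $X_u$ as initial datum, while on $\{\tau\wedge\rho_\cO<u\}$ it is left unchanged (immaterial past $\tau\wedge\rho_\cO$). The concatenation result of Section \ref{sec:ind} shows $(\hat\tau,\hat\alpha,\hat\xi)\in\AA^\nu_t$: adaptedness and progressive measurability persist under pasting along an $\cF^t_u$-measurable partition, and $V_{[t,T]}(\hat\xi)\le V_{[t,u]}(\xi)+\sum_{i}V_{[u,T]}(\xi^i)\,\ind_{\{X_u\in B_i\}}$ is $p$-integrable. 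Conditioning at $u$ and using the flow property of \eqref{StateProcess} (Assumption \ref{ass:Xind}) gives $J^\nu_{t,x}(\hat\tau,\hat\alpha,\hat\xi)\ge\bE\big[M^{\nu;\alpha,\xi}_{u\wedge\tau}\big]-3\eps$, whence $\bE\big[M^{\nu;\alpha,\xi}_{u\wedge\tau}\big]\le w^\nu(t,x)+3\eps\le v(t,x)+3\eps$. Taking the supremum over $(\tau,\alpha,\xi)$ and letting $\eps\downarrow0$ gives $\bar v(t,x)\le v(t,x)$.

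\emph{Main obstacle.} The crux of the argument is the concatenation step in Step 2: one must paste countably many $\eps$-optimal admissible systems --- a priori attached to different reference systems --- into a single admissible system on $\nu$, while preserving adaptedness, progressive measurability and the $p$-integrability of the total variation, and one must do so \emph{without} invoking a measurable selection theorem. This is precisely why the construction is carried out on the canonical (standard) reference systems via the regular conditional probability machinery developed in Section \ref{sec:ind} and the appendix --- the dual formulation \eqref{WeakFormulation} being indispensable here (cf.\ Remark \ref{Rmk:PseudoMarkov}) --- and why the selection of $\eps$-optimal continuations is reduced to a \emph{countable} choice over the partition $\{B_i\}$ through the uniform continuity in Assumption \ref{ass:obj}. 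A further, minor subtlety concerns the boundary of $\cO$ and the endpoint $u$: on $\{\rho_\cO=u\}$ the continuation problem started at $(u,X_u)$ forces immediate stopping, so $v(u,X_u)=g_1(u,X_u)$, which is the reason the statement carries the indicators $\ind_{\{\rho_\cO\le\tau\}\cap\{\rho_\cO<u\}}$ and $\ind_{\{\tau<u\wedge\rho_\cO\}}$ rather than their closed-interval analogues.
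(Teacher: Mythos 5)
Your two-step decomposition (conditioning on $\cF^t_u$ via regular conditional probabilities for ``$\le$'', a countable partition of $\bR^d$ plus concatenation of $\eps$-optimal continuations for ``$\ge$'') is exactly the paper's strategy, and Step~1 is sound. There is, however, a genuine gap in your Step~2, in the admissibility claim for the concatenated control $\hat\xi$. You paste, after time $u$, the $\eps$-optimal singular control $\xi^i$ on the event $\{X_u\in B_i\}$ for \emph{every} $i\in\bN$, and then assert that $V_{[t,T]}(\hat\xi)\le V_{[t,u]}(\xi)+\sum_i V_{[u,T]}(\xi^i)\,\ind_{\{X_u\in B_i\}}$ is $p$-integrable. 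This does not follow: since the events $\{X_u\in B_i\}$ are disjoint,
\[
\bE\Big[\Big|\sum_i V_{[u,T]}(\xi^i)\,\ind_{\{X_u\in B_i\}}\Big|^p\Big]=\sum_i\bE\big[|V_{[u,T]}(\xi^i)|^p\,\ind_{\{X_u\in B_i\}}\big],
\]
and you have no uniform bound on $\bE\big[|V_{[u,T]}(\xi^i)|^p\big]$ across the infinitely many $\eps$-optimal trebles $(\tau^i,\alpha^i,\xi^i)$ --- these moments may blow up as $|y_i|\to\infty$, so the series can diverge and the concatenated $\hat\xi$ need not lie in $\cX^\nu_t$ at all.

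The paper circumvents this by concatenating only over the first $k$ partition cells and falling back to an innocuous default (stop immediately at $u$, i.e., $\hat\tau^k=u$) on the residual event $U_k^c$; then $V_{[t,T]}(\hat\xi^k)=V_{[t,u]}(\xi^0)+\sum_{n\le k}V_{[u,T]}(\tilde\xi^n)\ind_{O_n}+V_{[u,T]}(\xi^0)\ind_{U_k^c}$ is a \emph{finite} sum of $p$-integrable terms and admissibility of $\hat\xi^k$ is immediate. The desired inequality is derived for each fixed $k$, and then $k\to\infty$ is taken at the end, controlling the contribution of $U_k^c$ using $v\ge g_2$, the lower bounds of Assumption~\ref{ass:obj0}, and Fatou's lemma. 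This truncation is not cosmetic: it is the step that makes the concatenation admissible in the infinite-fuel setting, and the paper even singles it out in a remark following the finite-fuel proof as the only point that changes when passing to infinite fuel. A minor further slip: there is no ``concatenation result'' in Section~\ref{sec:ind}; that construction lives in the DPP proof itself (Section~\ref{Sect:DPP}), while Section~\ref{sec:ind} supplies the weak/strong equivalence and the transport of controls across reference systems, both of which you invoke correctly.
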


\begin{theorem}[{\bf Finite fuel}]\label{thm:DPPdetff}
Fix $(t,x,z)\in[0,T]\times\bR^d\times[0,\bar z]$ and $\nu\in\cV_t$. For any $u\in[t,T]$, we have
\begin{align*}
v(t,x,z)=\sup_{(\tau,\alpha,\xi)\in\AA^\nu_{t,\bar z-z}} \bE\bigg[&\!\int_t^{u\wedge\tau\wedge\rho_\cO}\!\!\! f(s,X_s,Z_s,\alpha_s)\ud s\!-\!\int_{[t,u\wedge\tau\wedge\rho_\cO)}\!\langle c_+(s,X_s),\ud\xi^+_s\rangle\!\nonumber\\
&-\! \int_{[t,u\wedge\tau\wedge\rho_\cO)}\!\langle c_-(s,X_s),\ud\xi^-_s\rangle+g_1(\rho_\cO,X_{\rho_\cO},Z_{\rho_\cO})\ind_{\{\rho_\cO\leq\tau\}\cap\{ \rho_\cO<u \}} \\
&+g_2(\tau,X_\tau,Z_\tau)\ind_{\{\tau< u\wedge\rho_\cO \}}+v(u,X_u,Z_u)\ind_{\{\tau\wedge\rho_\cO\geq u\}} \bigg].\nonumber
\end{align*}
\end{theorem}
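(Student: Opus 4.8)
\textbf{Proof strategy for Theorems \ref{thm:DPPdet} and \ref{thm:DPPdetff}.}

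The plan is to prove the two inequalities separately, writing the right-hand side as $\sup_{(\tau,\alpha,\xi)}\bE[M^{\nu;\alpha,\xi}_{u\wedge\tau}]$ in the notation of \eqref{eq:M}. Throughout, we fix the initial data and we will repeatedly use that, by the results of Section \ref{sec:ind}, the value function $v$ is independent of the reference probability system and the weak and strong formulations coincide; we also use the continuity estimates \eqref{eq:vc1}--\eqref{eq:vc2} derived above from Assumptions \ref{ass:obj} and \ref{ass:value}.

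\emph{The easy inequality} ($v(t,x)\le\sup\bE[M_{u\wedge\tau}]$, and analogously in the finite-fuel case). Fix any admissible treble $(\tau,\alpha,\xi)$ on $\nu$. On the event $\{\tau\wedge\rho_\cO< u\}$ the running reward of $J^\nu_{t,x}(\tau,\alpha,\xi)$ has already been fully accrued by time $u\wedge\tau\wedge\rho_\cO$, so $M_{u\wedge\tau}$ coincides with the integrand of $J$ there. On the event $\{\tau\wedge\rho_\cO\ge u\}$ we must bound the ``tail'' of $J$ that is collected after time $u$ by $v(u,X_u)$ (resp.\ $v(u,X_u,Z_u)$). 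This is where the dual formulation enters: conditioning on $\cF^t_u$ and using a regular conditional probability (the machinery of the appendix), the shifted dynamics $(X_{u+\cdot},Z_{u+\cdot})$ together with the restricted controls $(\tau\vee u,\alpha|_{[u,T]},[\xi]^{u,\cdot})$ form an admissible configuration on some reference probability system started at $u$, whose conditional expected payoff is at most $v(u,X_u)$ by the very definition \eqref{WeakFormulation} of the weak value (and by Assumption \ref{ass:value} this is finite). Taking expectations and then the supremum over $(\tau,\alpha,\xi)$ gives the inequality; Assumption \ref{ass:obj0} guarantees the negative parts are integrable so the conditioning is legitimate.

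\emph{The hard inequality} ($v(t,x)\ge\sup\bE[M_{u\wedge\tau}]$, and analogously). Fix $(\tau,\alpha,\xi)$ and $\eps>0$. The idea is to concatenate: follow $(\tau,\alpha,\xi)$ up to time $u\wedge\tau\wedge\rho_\cO$, and on $\{\tau\wedge\rho_\cO\ge u\}$ splice in, from the random position $X_u$ (resp.\ $(X_u,Z_u)$), an $\eps$-optimal treble for $v(u,X_u)$. The obstacle here, as usual, is \emph{measurable selection}: the $\eps$-optimal continuation depends measurably on the starting point. The plan is to circumvent it exactly as in the classical approach — partition $\bR^d$ (resp.\ $\bR^d\times[0,\bar z]$) into countably many small Borel cells $(A_k)_k$, pick a single representative point $y_k\in A_k$ and an $\eps$-optimal treble for $v(u,y_k)$ on the canonical system $\nu^*$, and use the uniform continuity estimates \eqref{eq:vc1}--\eqref{eq:vc2} to ensure that this treble is $2\eps$-optimal for every starting point in $A_k$. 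The restriction-and-concatenation of controls across reference probability systems, which justifies that the concatenated object is again admissible and that its payoff equals the conditional expectation of the spliced $J$'s, is precisely what Section \ref{sec:ind} is built to deliver; in the finite-fuel case one additionally checks that the truncation $[\xi]^{u,Z_u}$ respects the remaining-fuel budget $\bar z - Z_u$, which it does by construction (see item (m) of Section \ref{sec:notation}). Summing over $k$ and letting $\eps\downarrow 0$ yields $\bE[M_{u\wedge\tau}]\le v(t,x)+C\eps$, hence the claim. The main technical labor — and the step I expect to be the real crux — is verifying that the concatenated triple is genuinely adapted to a bona fide reference-probability-system filtration and that no null-set pathologies arise when gluing via regular conditional probabilities; this is where the self-contained treatment in Section \ref{sec:ind} and the appendix does the heavy lifting, and where our argument must diverge from \cite{fabbri2017stochastic} because of the singular control and the stopping/exit-time features.
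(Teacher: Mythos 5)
Your proposal follows the same architecture as the paper's proof: the two inequalities are established separately, with regular conditional probabilities and a conditioning argument for $\le$, and a partition-and-paste construction circumventing measurable selection for $\ge$. The identification of the key technical machinery (Section \ref{sec:ind} and the appendix) and of the structural differences from \cite{fabbri2017stochastic} is accurate. However, two points in the $\ge$ step should be sharpened.

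First, you state that the continuity estimates \eqref{eq:vc1}--\eqref{eq:vc2} ensure the representative treble is $2\eps$-optimal for every starting point in the cell $A_k$. That is not quite the right citation: \eqref{eq:vc1}--\eqref{eq:vc2} only compare $v(u,y)$ to $v(u,y_k)$, but the argument must also control $J^{\nu'}_{u,y}(\text{treble})$ versus $J^{\nu'}_{u,y_k}(\text{treble})$ for the \emph{fixed} near-optimal treble, and that is exactly what Assumption \ref{ass:obj} (continuity of $J$ uniform in $(\tau,\alpha,\xi)$ and $\nu$) delivers via \eqref{Iterating1}. Continuity of $v$ alone is not enough; the partition refinement must be chosen relative to the modulus of continuity of $J$, from which \eqref{eq:vc1}--\eqref{eq:vc2} are then a corollary.

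Second, in the finite-fuel case the way the paper handles the budget is not by truncating the pasted-in continuation at $Z_u$: it chooses the representative point $(x_n,z_n)$ so that $z_n$ is an upper bound on the $z$-range of the cell. Then the $\tfrac1m$-optimal $\xi^n\in\cX^{\bar\nu}_u(\bar z-z_n)$ automatically satisfies $V_{[u,T]}(\xi^n)\le\bar z-z_n\le \bar z-Z_u(\omega)$ on $O_n$, so it is admissible from $(X_u(\omega),Z_u(\omega))$ with no truncation needed, and the ordering $z_1=Z_u(\omega)\le z_n=z_2$ is precisely what lets Assumption \ref{ass:obj} (finite-fuel version) apply with $[\xi]^{z_2}=\xi$. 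Your mention of $[\xi]^{u,Z_u}$ suggests truncating the continuation downward, which would go in the wrong direction; the trick is to pick $z_n$ large within the cell so truncation is vacuous.

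Beyond these two refinements, your sketch captures the proof correctly, including the role of $\{\cF^{t,0}_s\}$-predictable representatives, the transfer of admissible controls across reference probability systems via Lemma \ref{Lemma:CanonicalControl} and Lemma \ref{Lemma:ChangeOfControls}, and the final passage to the limit in $k$, $m$, $\eps$ using Fatou and the lower bound on $g_1,g_2$ from Assumption \ref{ass:obj0}.
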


We also have a more probabilistic interpretation of the above results. We state it in the next proposition, where we recall the definition of the process $M$ in \eqref{eq:M} and $N$ in \eqref{eq:N}.
\begin{proposition}\label{Prop:Msupermatingale}
For any admissible treble $(\tau,\alpha,\xi)$ the process $(M^{\nu;\alpha,\xi}_{s\wedge \tau})_{s\in[t,T]}$ defined in \eqref{eq:M} is a supermartingale in the reference probability system $\nu$. 
Assume further that the treble $(\tau^*,\alpha^*,\xi^*)$ is optimal and that
\begin{equation}\label{MartingaleIntegrability}
\bE[|N^{\alpha^*,\xi^*}_{u\wedge\tau^*}|+|v(u\wedge\tau^*,X^{\alpha^*,\xi^*}_{u\wedge\tau^*},Z^{\xi^*}_{u\wedge\tau^*})|]<\infty,\quad\text{for all $u\in[t,T]$}. 
\end{equation}
Then, the associated process $(M^{\nu;\alpha^*,\xi^*}_{s\wedge \tau^*})_{s\in[t,T]}$ is a martingale. The results hold for both finite and infinite fuel.
\end{proposition}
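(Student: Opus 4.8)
The plan is to read off the supermartingale property from the dynamic programming principle (Theorems~\ref{thm:DPPdet} and~\ref{thm:DPPdetff}) together with the conditioning machinery of Section~\ref{sec:ind}, and then to deduce the martingale property under optimality from the elementary fact that a supermartingale with constant mean is a martingale. Three preliminary remarks are in order. First, $(M^{\nu;\alpha,\xi}_{s\wedge\tau})_{s\in[t,T]}$ is $\{\cF^t_s\}$-adapted: $\tau,\rho_\cO$ are stopping times, $(X,Z)$ is adapted, $v$ is Borel (Assumption~\ref{ass:value}), and the two terminal terms in~\eqref{eq:M} are $\cF^t_u$-measurable by the measurability of stopped processes. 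Second, it is integrable: stopping immediately (i.e.\ $\tau=u$, $\xi\equiv 0$) yields payoff $g_1(u,\cdot)$ or $g_2(u,\cdot)$, so $g_i\ge-\bar g$ gives $v\ge-\bar g$; hence $(M_{u\wedge\tau})^-\le(N_{u\wedge\tau})^-+\bar g\in L^1$ by Assumption~\ref{ass:obj0}, while $\bE[M_{u\wedge\tau}]\le v(t,x,z)<\infty$ by the DPP yields $(M_{u\wedge\tau})^+\in L^1$. Third, inspection of~\eqref{eq:M} shows $M_{t\wedge\tau}=v(t,x,z)$ and that $M_{T\wedge\tau}$ equals pathwise the integrand of $J^\nu_{t,x,z}$ in~\eqref{eq:JZ} (at time $T$ the only admissible treble is $(T,\cdot,0)$ with $\rho_\cO=T$, so $v(T,\cdot)=g_1(T,\cdot)$), hence $\bE[M_{T\wedge\tau}]=J^\nu_{t,x,z}(\tau,\alpha,\xi)$.

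For the supermartingale property I would fix $t\le r\le s\le T$ and prove $\bE[M_{s\wedge\tau}\mid\cF^t_r]\le M_{r\wedge\tau}$ $\bP$-a.s.\ by splitting on $\{\tau\wedge\rho_\cO\le r\}$ and its complement. On $\{\tau\wedge\rho_\cO\le r\}$ the stopped process $(M_{\cdot\wedge\tau})$ is constant after the instant $\tau\wedge\rho_\cO$, up to a possible downward jump at that instant recorded through $v(r,\cdot)\ge g_i(r,\cdot)$ (the latter holds since from any state one may trigger $\tau$ or $\rho_\cO$ at once, and the exit jump is not charged because the cost integral runs over the half-open interval $[t,\tau\wedge\rho_\cO)$ and $X_{\rho_\cO}$ is a left-limit); thus $M_{s\wedge\tau}\le M_{r\wedge\tau}$ with both sides $\cF^t_r$-measurable, and nothing further is needed. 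On $\{\tau\wedge\rho_\cO>r\}$ one has $r\wedge\tau=r$, so $M_{r\wedge\tau}=N_{r\wedge\tau}+v(r,X_r,Z_r)$, and the increment $M_{s\wedge\tau}-N_{r\wedge\tau}$ is exactly the expression~\eqref{eq:M} evaluated for the state $(X_r,Z_r)$ at time $r$, the stopping time $\tau$, and the controls $(\alpha,\xi)$ restricted to $[r,T]$ with $\xi$ recentred at $r$ (the recentred singular control automatically respects the remaining fuel, since $V_{[r,T]}(\xi)\le\bar z-Z_r$). Conditioning on $\cF^t_r$ and invoking the regular-conditional-probability construction of Section~\ref{sec:ind}, for $\bP$-a.e.\ $\omega$ this shifted treble is admissible in a reference probability system $\nu_\omega\in\cV_r$ started at $(r,X_r(\omega),Z_r(\omega))$, and
\[
\bE\big[M_{s\wedge\tau}-N_{r\wedge\tau}\mid\cF^t_r\big](\omega)=\bE^{\nu_\omega}\big[M^{\nu_\omega}_{s\wedge\tau}\big]\le v\big(r,X_r(\omega),Z_r(\omega)\big),
\]
the last step being the ``$\le$'' inequality of the DPP at $(r,X_r(\omega),Z_r(\omega))$. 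Hence $\bE[M_{s\wedge\tau}\mid\cF^t_r]\le M_{r\wedge\tau}$ on $\{\tau\wedge\rho_\cO>r\}$ as well, so $(M_{s\wedge\tau})_s$ is a supermartingale in $\nu$.

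For the martingale property, assume $(\tau^*,\alpha^*,\xi^*)$ is optimal and satisfies~\eqref{MartingaleIntegrability}; the latter makes $M^*:=(M^{\nu;\alpha^*,\xi^*}_{s\wedge\tau^*})_s$ integrable. By the third remark and optimality, $\bE[M^*_{T\wedge\tau^*}]=J^\nu_{t,x,z}(\tau^*,\alpha^*,\xi^*)=v(t,x,z)=M^*_{t\wedge\tau^*}$. Since $M^*$ is a supermartingale, $u\mapsto\bE[M^*_{u\wedge\tau^*}]$ is non-increasing on $[t,T]$ with equal endpoints, hence constant; a supermartingale with constant mean is a martingale, because for $r\le s$ the quantity $\bE[M^*_{r\wedge\tau^*}-\bE[M^*_{s\wedge\tau^*}\mid\cF^t_r]]$ is both zero and the expectation of a non-negative random variable. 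The infinite-fuel statements follow by the same argument after dropping the dependence on $Z$ and using~\eqref{eq:J} and~\eqref{eq:N} in place of~\eqref{eq:JZ} and~\eqref{eq:M}.

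The step I expect to be the main obstacle is the shifting/conditioning argument in the second paragraph: showing rigorously, by means of the regular-conditional-probability results of Section~\ref{sec:ind} and the appendix, that after conditioning on $\cF^t_r$ the tail $M_{s\wedge\tau}-N_{r\wedge\tau}$ is the $M$-process of a genuinely admissible restarted problem. This comprises the adaptedness and admissibility of the recentred controls for $\bP$-a.e.\ $\omega$, the identification---via pathwise uniqueness, Assumption~\ref{ass:Xind}---of the restarted state process and exit time with $X$, $Z$ and $\rho_\cO$ on $\{\tau\wedge\rho_\cO>r\}$, and the equality of the conditional expectation with an expectation under $\nu_\omega$; it also involves careful bookkeeping of the freezing of $M$ at $\tau\wedge\rho_\cO$ and of the boundary event $\{\tau\wedge\rho_\cO=r\}$. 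Everything else is soft, being essentially a repackaging of the DPP.
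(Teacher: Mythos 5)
Your supermartingale argument is the paper's: condition at a deterministic time $r$, pass to the regular-conditional-probability system $\nu_\omega\in\cV_r$ of Appendix~\ref{App:RegCondProb}, identify the tail of $M_{s\wedge\tau}$ after $r$ with the restarted $M$-process, and bound its $\bP_\omega$-expectation by $v(r,X_r(\omega),Z_r(\omega))$ via Theorem~\ref{thm:DPPdetff}; the only cosmetic difference is your split at $\{\tau\wedge\rho_\cO>r\}$ versus the paper's at $\{\tau\wedge\rho_\cO\ge u\}$, which changes how the boundary event $\{\tau\wedge\rho_\cO=r\}$ is treated (you handle it pathwise via $v\ge g_i$, using $v(r,\cdot)=g_1(r,\cdot)$ outside $\cO$ and $v\ge g_2$ inside; the paper feeds that event through the same DPP inequality, and both are fine). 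For the martingale half you take a genuinely different and cleaner route: you check $\bE[M^*_{T\wedge\tau^*}]=J^\nu_{t,x,z}(\tau^*,\alpha^*,\xi^*)=v(t,x,z)=M^*_{t\wedge\tau^*}$ (using the boundary identity $v(T,\cdot,\cdot)=g_1(T,\cdot,\cdot)$, which does hold here since $\tau=\rho_\cO=T$ is forced at terminal time) and then invoke the elementary fact that an integrable supermartingale with constant mean is a martingale. The paper instead re-runs Step~1 of Theorem~\ref{thm:DPPdetff} with the optimal treble to derive $v(u,X^*_u,Z^*_u)=J^{\nu_\omega}_{u,X^*_u,Z^*_u}(\tau^*,\alpha^*,\xi^*)$ for a.e.\ $\omega$ on the continuation region at each $u$, and promotes the key inequality~\eqref{v_uGeq} to an equality. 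Your constant-mean argument avoids all those intermediate conditional-optimality identities; what it buys is a shorter and more transparent deduction, at the small cost of verifying the terminal behavior at $T$ — which you do.
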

Finally, under the additional Assumption \ref{ass:DCT}, we obtain the DPP for stopping times.
\begin{theorem}[{\bf Infinite fuel}]\label{thm:DPPst}
Fix $(t,x)\in[0,T]\times\bR^d$ and $\nu\in\cV_t$. Under Assumption \ref{ass:DCT}, for any $\sigma\in\cT^\nu_t$, we have
\begin{align}\label{DPPst}
v(t,x)=\sup_{(\tau,\alpha,\xi)\in\AA^\nu_t} \bE\bigg[&\!\int_t^{\sigma\wedge\tau\wedge\rho_\cO}\!\!\! f(s,X_s,\alpha_s)\ud s\!-\!\int_{[t,\sigma\wedge\tau\wedge\rho_\cO)}\!\!\Big(\langle c_+(s,X_s),\ud\xi^+_s\rangle\!+\!\langle c_-(s,X_s),\ud\xi^-_s\rangle\Big)\nonumber\\
&\!+g_1(\rho_\cO,X_{\rho_\cO})\ind_{\{\rho_\cO\leq\tau\}\cap\{ \rho_\cO<\sigma \}}\!+\!g_2(\tau,X_\tau)\ind_{\{\tau< \sigma\wedge\rho_\cO \}}\!+\!v(\sigma,X_\sigma)\ind_{\{\tau\wedge\rho_\cO\geq \sigma\}} \bigg].
\end{align}
\end{theorem}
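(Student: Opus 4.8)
The plan is to deduce \eqref{DPPst} from its deterministic-time version Theorem~\ref{thm:DPPdet}, the supermartingale property of Proposition~\ref{Prop:Msupermatingale}, and Assumption~\ref{ass:DCT}, by approximating $\sigma$ with simpler stopping times and passing to the limit. First I would record that, for an admissible treble $(\tau,\alpha,\xi)$, the random variable inside the supremum in \eqref{DPPst} is exactly $M^{\nu;\alpha,\xi}_{\sigma\wedge\tau}$ with $M$ as in \eqref{eq:M} (dropping the $Z$-dependence), and that $M^{\nu;\alpha,\xi}_{t\wedge\tau}=M^{\nu;\alpha,\xi}_t=v(t,x)$; thus \eqref{DPPst} reads $v(t,x)=\sup_{(\tau,\alpha,\xi)\in\AA^\nu_t}\bE[M^{\nu;\alpha,\xi}_{\sigma\wedge\tau}]$, which I would establish by proving the two inequalities ``$\ge$'' and ``$\le$'' separately.

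For ``$\ge$'', I would use that $(M^{\nu;\alpha,\xi}_{s\wedge\tau})_{s\in[t,T]}$ is a supermartingale (Proposition~\ref{Prop:Msupermatingale}) and apply optional sampling to get $\bE[M^{\nu;\alpha,\xi}_{\sigma\wedge\tau}]\le\bE[M^{\nu;\alpha,\xi}_{t\wedge\tau}]=v(t,x)$, and then take the supremum over trebles. Since $s\mapsto(X_s,Z_s)$, hence $s\mapsto M^{\nu;\alpha,\xi}_{s\wedge\tau}$, is only left-continuous $\bP$-a.s., the optional sampling step needs care: I would approximate $\sigma$ from below by stopping times $\sigma_n\uparrow\sigma$ on $\{\sigma>t\}$ (such a sequence exists since, all martingales for the augmented Brownian filtration being continuous, every stopping time is predictable), get $\bE[M^{\nu;\alpha,\xi}_{\sigma_n\wedge\tau}]\le v(t,x)$ for each $n$ (reducible, via a deterministic grid and discrete-time optional sampling, to Theorem~\ref{thm:DPPdet}), use left-continuity of $s\mapsto M^{\nu;\alpha,\xi}_{s\wedge\tau}$ so that $M^{\nu;\alpha,\xi}_{\sigma_n\wedge\tau}\to M^{\nu;\alpha,\xi}_{\sigma\wedge\tau}$ $\bP$-a.s., and invoke Assumption~\ref{ass:DCT} to pass to the limit in expectation, obtaining $\bE[M^{\nu;\alpha,\xi}_{\sigma\wedge\tau}]\le v(t,x)$.

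For ``$\le$'', since $v(t,x)=\sup_{(\tau,\alpha,\xi)}J^\nu_{t,x}(\tau,\alpha,\xi)$, it suffices to show $J^\nu_{t,x}(\tau,\alpha,\xi)\le\bE[M^{\nu;\alpha,\xi}_{\sigma\wedge\tau}]$ for every admissible treble and then take the supremum on the left. Decomposing $J^\nu_{t,x}(\tau,\alpha,\xi)$ into the reward accrued up to $\sigma\wedge\tau\wedge\rho_\cO$, the terminal payoff on $\{\tau\wedge\rho_\cO<\sigma\}$, and the reward-to-go on $\{\tau\wedge\rho_\cO\ge\sigma\}$, this reduces to the flow (``pseudo-Markov'') bound: conditionally on $\cF^t_\sigma$ and on $\{\tau\wedge\rho_\cO\ge\sigma\}$, the reward of continuing to use $(\tau,\alpha,\xi)$ after time $\sigma$ is $\le v(\sigma,X_\sigma)$. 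For deterministic $\sigma=u$ this is exactly the content already extracted in the proof of Theorem~\ref{thm:DPPdet}: one conditions on $\cF^t_u$ through a regular conditional probability (appendix), exploits that the post-$u$ increments of $W$ form a Brownian motion so that the time-shifted controls define an admissible treble in a reference probability system starting at $u$, and uses the independence of the value function from the reference probability system from Section~\ref{sec:ind} (i.e.\ $w^\nu\equiv v$) to bound the conditional reward-to-go by $v(u,X_u)$. For a genuine stopping time $\sigma$ I would run the same argument on the atoms $\{\sigma_n=u\}\in\cF^t_u$ of an approximating sequence $\{\sigma_n\}\subseteq\cT^\nu_t$, sum, and let $n\to\infty$ using left-continuity of $M^{\nu;\alpha,\xi}_{\cdot\wedge\tau}$ and Assumption~\ref{ass:DCT}; as in the first inequality this forces the approximation to be taken from below, and the strong Markov property of $W$ is what justifies the conditional deterministic DPP at the (no longer simple) stopping times $\sigma_n$.

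The step I expect to be the main obstacle is the flow/pseudo-Markov property underpinning the second inequality: transferring the dynamic programming identity past a random time means re-starting the controlled dynamics at $\sigma$ with the reference probability system itself allowed to change along the trajectory thereafter — which is precisely why the weak formulation \eqref{WeakFormulation} is indispensable (cf.\ Remark~\ref{Rmk:PseudoMarkov}) — and it is here that the full apparatus of Section~\ref{sec:ind} (equivalence of weak and strong formulations, invariance in law of the controlled dynamics under a change of reference system, and control of null sets under regular conditional probabilities) is needed. A secondary, purely technical nuisance, precisely what Assumption~\ref{ass:DCT} is there to absorb, is that $M^{\nu;\alpha,\xi}$ is merely c\`agl\`ad, so all stopping-time approximations must be performed from below to preserve convergence of $M$ along them, and a dominated-convergence-type bound is needed to carry that convergence inside the expectation.
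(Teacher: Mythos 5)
Your proposal is correct and essentially reproduces the paper's proof: approximate $\sigma$ from below by predictable stopping times $\sigma_n\uparrow\sigma$ taking values in a dyadic grid, obtain ``$\ge$'' from the supermartingale property of $M$ (Proposition~\ref{Prop:Msupermatingale}) together with discrete-time optional sampling, obtain ``$\le$'' by applying the deterministic DPP argument from Theorem~\ref{thm:DPPdet} on the atoms $\{\sigma_n=t^i_n\}$ and summing, and pass to the limit in $n$ via Assumption~\ref{ass:DCT}. One small imprecision in the description of ``$\le$'': the $\sigma_n$ \emph{are} simple, and the conditioning happens at the deterministic times $t^i_n$ through regular conditional probabilities given $\cF^{t,0}_{t^i_n}$ on the atoms $A^i_n=\{\sigma_n=t^i_n\}$, so it is this pseudo-Markov apparatus---not the strong Markov property of $W$ at $\sigma_n$---that drives the estimate.
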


\begin{theorem}[{\bf Finite fuel}]\label{thm:DPPsttff}
Fix $(t,x,z)\in[0,T]\times\bR^d\times[0,\bar z]$ and $\nu\in\cV_t$. Under Assumption \ref{ass:DCT}, for any $\sigma\in\cT^\nu_t$, we have
\begin{align*}
v(t,x,z)=\sup_{(\tau,\alpha,\xi)\in\AA^\nu_{t,\bar z-z}} \bE\bigg[&\!\int_t^{\sigma\wedge\tau\wedge\rho_\cO}\!\!\! f(s,X_s,Z_s,\alpha_s)\ud s\!-\!\int_{[t,\sigma\wedge\tau\wedge\rho_\cO)}\!\langle c_+(s,X_s),\ud\xi^+_s\rangle\!\nonumber\\
&-\! \int_{[t,\sigma\wedge\tau\wedge\rho_\cO)}\!\langle c_-(s,X_s),\ud\xi^-_s\rangle+g_1(\rho_\cO,X_{\rho_\cO},Z_{\rho_\cO})\ind_{\{\rho_\cO\leq\tau\}\cap\{ \rho_\cO<\sigma \}} \\
&+g_2(\tau,X_\tau,Z_\tau)\ind_{\{\tau< \sigma\wedge\rho_\cO \}}+v(\sigma,X_\sigma,Z_\sigma)\ind_{\{\tau\wedge\rho_\cO\geq \sigma\}} \bigg].\nonumber
\end{align*}
\end{theorem}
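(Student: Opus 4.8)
The plan is to write the right-hand side as $\sup_{(\tau,\alpha,\xi)\in\AA^\nu_{t,\bar z-z}}\bE[M^{\nu;\alpha,\xi}_{\sigma\wedge\tau}]$, with $M$ the process in \eqref{eq:M}, to note that $M^{\nu;\alpha,\xi}_t=v(t,x,z)$ for every admissible treble by direct inspection of \eqref{eq:M}, and then to prove the two inequalities ``$\ge$'' and ``$\le$''. The argument is the finite-fuel counterpart of the one for Theorem \ref{thm:DPPst}: the ingredients are the deterministic-time DPP of Theorem \ref{thm:DPPdetff}, the supermartingale property of Proposition \ref{Prop:Msupermatingale}, and Assumption \ref{ass:DCT}, the only genuinely new bookkeeping being the running fuel, which is handled via the truncation $[\xi]^{Z}$ and the finite-fuel continuity \eqref{ContFinFuel}.

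For ``$v(t,x,z)\ge\sup\bE[M_{\sigma\wedge\tau}]$'' I would fix a treble and approximate $\sigma$ from below by simple stopping times $\sigma_n\uparrow\sigma$; applying the discrete optional sampling theorem to the supermartingale $(M^{\nu;\alpha,\xi}_{s\wedge\tau})_{s\in[t,T]}$ of Proposition \ref{Prop:Msupermatingale} sampled at $t\le\sigma_n$ yields $\bE[M_{\sigma_n\wedge\tau}]\le\bE[M_{t\wedge\tau}]=v(t,x,z)$, and Assumption \ref{ass:DCT} lets me pass to the limit to obtain $\bE[M_{\sigma\wedge\tau}]\le v(t,x,z)$; taking the supremum over trebles finishes this direction.

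The reverse inequality is the core of the argument. I would first establish it when $\sigma$ is \emph{simple}, say with values $t=u_0<u_1<\dots<u_m=T$. Fixing $\eps>0$, one applies Theorem \ref{thm:DPPdetff} at horizon $u_1$ to select a near-optimal treble for $v(t,x,z)$; on the event $\{\tau\wedge\rho_\cO\ge u_1\}\cap\{\sigma>u_1\}$ the process $M$ then carries the term $v(u_1,X_{u_1},Z_{u_1})$, which must be re-expanded by solving the control problem restarted at the random point $(u_1,X_{u_1},Z_{u_1})$ with the random residual fuel $\bar z-Z_{u_1}$ and horizon $u_2$, and so on through the remaining layers. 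To realise this concatenation as an admissible treble without invoking a measurable selection I would exploit the uniform continuity of $v$ in \eqref{eq:vc2}: cover $\bR^d\times[0,\bar z]$ by countably many small balls on each of which a single sub-treble is $\eps$-optimal up to a controlled error, partition the restart locus along these balls, and use the reference-system-independence results of Section \ref{sec:ind} to transplant all the sub-trebles onto the common system $\nu$ and glue them into one progressively measurable treble. Since the residual fuel at a restart generally differs from the fuel budget of the selected sub-treble, one truncates the latter by $[\,\cdot\,]^{Z_{u_k}}$ so that admissibility is preserved, controlling the induced error via \eqref{ContFinFuel}; summing the $O(m)$ errors and sending $\eps\downarrow 0$ gives $v(t,x,z)\le\sup\bE[M_{\sigma\wedge\tau}]$ for simple $\sigma$. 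Finally, for an arbitrary $\sigma\in\cT^\nu_t$ I would approximate it from above by simple stopping times $\sigma_n\downarrow\sigma$: since $\sigma\wedge\tau\le\sigma_n\wedge\tau$ for every treble, the supermartingale property (optional sampling) gives $\bE[M_{\sigma\wedge\tau}]\ge\bE[M_{\sigma_n\wedge\tau}]$, whence $\sup\bE[M_{\sigma\wedge\tau}]\ge\sup\bE[M_{\sigma_n\wedge\tau}]=v(t,x,z)$ by the simple-$\sigma$ case.

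I expect the delicate point to be the concatenation in the simple-$\sigma$ case: producing a single progressively measurable, admissible treble on one reference system whose layer-by-layer restarts simultaneously match the random residual-fuel budgets. This is exactly where the continuity in \eqref{eq:vc2} and \eqref{ContFinFuel}, the truncation $[\xi]^{Z}$, and the equivalences in law of Section \ref{sec:ind} are all needed; the remaining steps are a careful but essentially routine combination of optional sampling with the deterministic-time DPP.
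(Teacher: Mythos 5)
Your direction ``$v(t,x,z)\ge\sup\bE[M_{\sigma\wedge\tau}]$'' is exactly the paper's Step 2: approximate $\sigma$ from below by simple $\sigma_n\uparrow\sigma$, apply discrete optional sampling to the supermartingale $(M_{s\wedge\tau})_s$ of Proposition \ref{Prop:Msupermatingale}, and pass to the limit by Assumption \ref{ass:DCT}. That part is fine.

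The other direction, however, departs from the paper's proof in a way that introduces two problems. First, you set out to \emph{construct} a concatenated near-optimal treble layer by layer; this is not needed. Since the right-hand side carries the supremum over trebles, the inequality $v(t,x,z)\le\sup_{(\tau,\alpha,\xi)}\bE[M^{\nu;\alpha,\xi}_{\sigma\wedge\tau}]$ follows once one shows that $J^\nu_{t,x,z}(\tau,\alpha,\xi)\le\bE[M^{\nu;\alpha,\xi}_{\sigma\wedge\tau}]$ for \emph{every} admissible treble and then takes the sup on both sides. The paper does exactly this: for $\sigma_n\uparrow\sigma$ simple, it decomposes $J$ at time $\sigma_n$, conditions on each event $A^i_n=\{\sigma_n=t^i_n\}$ (which lies in $\cF^t_{t^i_n}$), and on each such event applies the bound from Step 1 of the deterministic-time proof of Theorem~\ref{thm:DPPdetff} to replace the tail of $J$ after $t^i_n$ by $v(t^i_n,X_{t^i_n},Z_{t^i_n})$. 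Summing over $i$ and passing to the limit via Assumption~\ref{ass:DCT} gives $J\le\bE[M_{\sigma\wedge\tau}]$. Your concatenation scheme re-derives Step 2 of Theorem~\ref{thm:DPPdetff} (partitioning, reference-system equivalence, truncation $[\xi]^Z$, gluing) several times over; that machinery belongs to the deterministic DPP and need not be re-run here.

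Second, and more seriously, your passage from simple $\sigma$ to general $\sigma$ has a genuine gap. You approximate $\sigma$ from \emph{above} by simple $\sigma_n\downarrow\sigma$ and invoke ``optional sampling'' to write $\bE[M_{\sigma\wedge\tau}]\ge\bE[M_{\sigma_n\wedge\tau}]$. But the supermartingale $(M_{s\wedge\tau})_s$ has only left-continuous paths (the controls, $X$ and $Z$ are c\`agl\`ad, and $v$ is composed with them). Optional sampling at an arbitrary, non-simple stopping time $\sigma\wedge\tau$ requires right-continuity (or c\`adl\`ag regularity) of the supermartingale, which is not available; the paper's use of optional sampling is deliberately restricted to the discrete-time version with simple stopping times. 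Nor does Assumption~\ref{ass:DCT} help you here, since it only governs sequences increasing to $\sigma$ ($\sigma_n\uparrow\sigma$), in harmony with left-continuity of $M$. Approximating from above and trying to appeal to an optional sampling inequality that the framework does not supply is the critical flaw; the paper avoids it entirely by never needing to sample at a non-simple stopping time.
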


In the next sections we will develop the theoretical framework that allows to prove the main results stated above. The key steps are two: 
\begin{enumerate} 
\item Showing the equivalence of strong and weak formulation via the so-called {\em independence of the reference probability system} (Section \ref{sec:ind});
\item Combining the use of strong and weak formulation with the use of regular conditional probabilities to arrive at the DPP (Section \ref{Sect:DPP}).
\end{enumerate}
For our analysis we follow closely the approach and main ideas in \cite[Chapter 2]{fabbri2017stochastic}, where the DPP is obtained in an infinite-dimensional setting. In \cite{fabbri2017stochastic} only classical controls are considered and without discretionary stopping or exit times from a given domain. As stated in \cite[Remark 2.15]{fabbri2017stochastic}, the fine technical details of the proofs are extremely sensitive to variations in the problem setting  and particularly to conditions imposed on the class of admissible controls that go beyond their measurability (for instance, left-continuity, bounded variation and the integrability/finite-fuel condition for singular controls, in our case). Additional difficulties arise from the discretionary stopping and the exit time $\rho_\cO$. Thus, we develop specific arguments to address our needs.

\section{Independence of the reference probability system}\label{sec:ind}
	
In this section we show that the problem is independent of the choice of the reference probability system $\nu\in\cV_t$ and thus that the strong formulation \eqref{StrongFormulation} and the weak formulation \eqref{WeakFormulation} coincide (respectively, for finite-fuel, \eqref{SFff} and \eqref{WFff}). In particular, for every $(t,x)\in[0,T]\times\bR^d$ and $\nu\in\cV_t$ given and fixed, we are going to show that we have
\begin{equation}\label{IndepRefProbSyst}
w^\nu(t,x)=v(t,x),
\end{equation}
and analogously for the finite-fuel case. We develop all arguments in this section for the infinite-fuel problem for notational simplicity and, when necessary, we show that their analogue for the finite-fuel setting holds with obvious changes. From now on, let $(t,x)\in [0,T]\times\bR^d$ be fixed (analogously $(t,x,z)\in [0,T]\times\bR^d\times[0,\bar z]$ are fixed in the finite-fuel setting). Unless stated otherwise, $\nu\in \cV_t$ is an arbitrary reference probability system.
	
It is useful to note an equivalent representation of stopping times in terms of a non-decreasing process. Indeed, for $\tau\in\cT^\nu_t$ we can define $\eta_s^\tau=\ind_{\{s>\tau\}}$ for $s\in[t,T]$ so that $(\eta_s^\tau)_{s\in[t,T]}$ is non-decreasing, left-continuous, with a single jump at time $\tau$. Motivated by this simple observation, given $u\in[t,T]$, we denote by $\cE^\nu_u$ the collection of processes $\eta=(\eta_s)_{s\in[t,T]}$ such that
\begin{enumerate}[(i)]
\item $\eta$ is $\{\cF^t_s\}$-adapted.
\item $\eta$ is left-continuous and non-decreasing $\bP$-a.s.
\item $\eta_s\in\{0,1\}$ for every $s\in[t,T]$ and $\eta_s=0$ for every $s\in[t,u]$, $\bP$-a.s.
\end{enumerate}
Then, $\eta^\tau\in\cE^\nu_t$ for $\tau\in\cT^\nu_t$ and, conversely, given $\eta\in\cE^\nu_t$ we define a $\{\cF^t_s\}$-stopping time 
\begin{align}\label{eq:etatau}
\tau^\eta=\inf\{s\geq t: \eta_{s}=1 \}\wedge T\in\cT^\nu_t.
\end{align}
Clearly $\cE^\nu_t\subset\cX^\nu_t$, so that properties which we will prove below for elements of $\cX^\nu_t$ immediately hold for elements of $\cE^\nu_t$ as well. 

Noticing that $\ind_{\{s\le \tau\}}=1-\eta^\tau_s$ and $\ind_{\{s< \tau\}}=1-\eta^\tau_{s+}$ we can also rewrite the objective function in another convenient form. For every admissible treble $(\tau,\alpha,\xi)$ we have
\begin{align}\label{TauEtaEquivalence}
J^\nu_{t,x}(\tau,\alpha,\xi)
&=\bE\bigg[\!\int_t^{\rho_{\cO}}\!\!(1-\eta^\tau_{s})f(s,X_s,\alpha_s)\ud s\!-\!\int_{[t,\rho_{\cO})}\!(1\!-\!\eta^\tau_{s+})\Big(\langle c_+(s,X_s),\ud\xi^+_s\rangle\!+\!\langle c_-(s,X_s),\ud\xi^-_s\rangle\Big) \nonumber\\
&\hspace{30pt}+g_1(\rho_{\cO},X_{\rho_{\cO}})(1-\eta^\tau_{\rho_{\cO}})+\int_{[t,\rho_{\cO})}g_2(s,X_s)\ud\eta^\tau_s\bigg]=:I^\nu_{t,x}(\eta^\tau,\alpha,\xi).
\end{align}
Conversely, for every $(\eta,\alpha,\xi)\in\cE^\nu_t\times\cA^\nu_t\times\cX^\nu_t$, we have
\begin{equation}\label{EtaTauEquivalence}
I^\nu_{t,x}(\eta,\alpha,\xi)=J^\nu_{t,x}(\tau^\eta,\alpha,\xi).
\end{equation}
For the finite-fuel case $J^\nu_{t,x,z}(\tau,\alpha,\xi)=I^\nu_{t,x,z}(\eta^\tau,\alpha,\xi)$ and $I^\nu_{t,x,z}(\eta,\alpha,\xi)=J^\nu_{t,x,z}(\tau^\eta,\alpha,\xi)$ by the exact same argument. For simplicity, but with a slight abuse of notation, we sometimes say that $(\eta^\tau,\alpha,\xi)$ is an admissible treble belonging to either $\AA^\nu_{t}$ or $\AA^\nu_{t,\bar z-z}$, provided that $(\tau,\alpha,\xi)$ is such.

The first difficulty in establishing the equivalence in \eqref{IndepRefProbSyst} is that null-sets and probabilistic properties vary along with the underlying reference probability systems. We now show that while all processes are adapted to the $\bP$-augmented Brownian filtration $\{\cF^t_s\}$, it is however possible to select representatives (in a suitable sense) that are adapted to the raw Brownian filtration $\{\cF^{t,0}_s\}$.

For classical controls the result can be found directly in \cite{fabbri2017stochastic}, hence we do not prove it here. We refer the reader to the final item in our Section \ref{sec:notation} for a formal definition of $\{\cF^{t,0}_s \}$-predictable process used below.
\begin{lemma}\label{Lemma:AlphaAlpha0}
Given $\alpha\in\cA^\nu_t$ there exists a $\{\cF^{t,0}_s \}$-predictable process $\alpha^0\in\cA^{\nu}_t$ such that $\alpha^0=\alpha$, $\ud s\times\bP$-a.e.~on $[t,T]\times\Omega$.
\end{lemma}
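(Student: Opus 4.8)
We outline a direct, self-contained argument; the statement can also be read off from \cite{fabbri2017stochastic}. The aim is to replace $\alpha$ by a process that is built only from the raw Brownian filtration. Since $\{\cF^t_s\}$ differs from $\{\cF^{t,0}_s\}$ only by $\bP$-null sets, the plan is to regularise $\alpha$ through a left-continuous ``retarded dyadic average'' — which becomes $\{\cF^{t,0}_s\}$-predictable once one removes the null sets at the (countably many) grid times — and then to pass to the limit. First I would make the harmless reduction to a bounded control set: composing $\alpha$ with a homeomorphism of $\bR^l$ onto a bounded open set reduces the problem to a bounded process, recovered at the end via the (continuous, hence Borel) inverse; so assume $\alpha$ bounded.

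For $n\in\bN$ set $t_k:=t+k(T-t)2^{-n}$, $k=0,\dots,2^n$, and let $\bar\alpha^{(n)}_s:=\tfrac{2^n}{T-t}\int_{t_{k-1}}^{t_k}\alpha_r\,\ud r$ for $s\in(t_k,t_{k+1}]$, with $\bar\alpha^{(n)}$ set to a fixed $k_0\in\cK$ on $[t,t_1]$. This process is left-continuous and $\{\cF^t_s\}$-adapted, hence $\{\cF^t_s\}$-predictable; moreover $\bar\alpha^{(n)}\to\alpha$ in $L^1([t,T]\times\Omega,\ud s\times\bP)$, because $\bar\alpha^{(n)}_s-\alpha_s$ is an average of $\alpha_r-\alpha_s$ over $r$ at distance $\le 2(T-t)2^{-n}$ from $s$, so the convergence reduces to the continuity of translation in $L^1(\ud s\times\bP)$ (which, unlike evaluation at fixed times, is insensitive to null-set modifications of $\alpha$). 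Each $\int_{t_{k-1}}^{t_k}\alpha_r\,\ud r$ is $\cF^t_{t_k}$-measurable and $\overline{\mathrm{conv}}(\cK)$-valued; since $\cF^t_{t_k}=\{A:\exists\,B\in\cF^{t,0}_{t_k},\ \bP(A\triangle B)=0\}$ (so that this $\sigma$-algebra is precisely the augmentation of $\cF^{t,0}_{t_k}$), a monotone-class argument furnishes an $\cF^{t,0}_{t_k}$-measurable, $\overline{\mathrm{conv}}(\cK)$-valued version of this integral, equal to it $\bP$-a.s. Rebuilding the retarded averages from these versions gives processes $\hat\alpha^{(n)}$ that are $\cP_\Omega$-measurable and coincide $\bP$-a.s.\ (as processes) with $\bar\alpha^{(n)}$; in particular $\hat\alpha^{(n)}\to\alpha$ in $L^1(\ud s\times\bP)$.

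Finally, I would pick a subsequence $(n_j)$ along which $\hat\alpha^{(n_j)}\to\alpha$ holds $\ud s\times\bP$-a.e., and set $\alpha^0:=\limsup_j\hat\alpha^{(n_j)}$ (coordinatewise) on the $\cP_\Omega$-measurable set where this limit superior is a finite vector of $\cK$, and $\alpha^0:=k_0$ elsewhere. Then $\alpha^0$ is $\cP_\Omega$-measurable (a countable lim sup of $\cP_\Omega$-measurable processes), it is $\cK$-valued by construction, and outside a $\ud s\times\bP$-null set one has $\alpha^0=\lim_j\hat\alpha^{(n_j)}=\alpha$, so $\alpha^0=\alpha$, $\ud s\times\bP$-a.e.; since predictability with respect to $\{\cF^{t,0}_s\}$ entails $\{\cF^t_s\}$-progressive measurability, $\alpha^0\in\cA^\nu_t$, which is the claim.

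The crux, I expect, is the interplay that forces the use of \emph{averages}: one only ever gets agreement ``$\ud s\times\bP$-a.e.'', and to make the limit genuinely predictable the approximants must be stable under the null-set surgery at the grid times — which evaluation $\alpha_{t_k}$ is not, but integral averages are — at the price of truncating back onto $\cK$ at the very end, since the averages only take values in $\overline{\mathrm{conv}}(\cK)$. The accompanying analytic input is the $L^1$-continuity of translation; the single-time removal of null sets and the assembly of $\alpha^0$ are routine. (A shorter, less elementary route is to use the $\{\cF^t_s\}$-predictable projection of $\alpha$, which is indistinguishable from $\alpha$ since the Brownian filtration is continuous — so $\cF^t_{\sigma-}=\cF^t_\sigma$ and every stopping time is predictable — and then to descend to $\cP_\Omega$ by the same null-set argument.)
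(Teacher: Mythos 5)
Your argument is correct and self-contained, whereas the paper simply cites \cite[Lemma 1.99]{fabbri2017stochastic} and \cite[Lemma 2.4]{soner2011quasi} without proof. Comparing with the paper's own proof of the companion Lemma~\ref{Lemma:Predict0Indisting} (left-continuous adapted processes), the scheme is the same --- dyadic discretisation, replacement of the grid values by $\cF^{t,0}_{t^n_i}$-measurable representatives, and a $\liminf$/$\limsup$ passage to the limit --- but you correctly identify that, with only progressive measurability and no pathwise regularity, pointwise evaluation at grid times is useless (it is not stable under $\ud s\times\bP$-negligible modifications), and you replace it by retarded Steklov averages, whose $\cF^{t}_{t_k}$-measurability follows from progressive measurability via Fubini, and whose $L^1(\ud s\times\bP)$-convergence to $\alpha$ follows from continuity of translation once $\alpha$ has been made bounded. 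The subsequence extraction, the $\limsup$, and the final truncation onto $\cK$ then close the argument, and the remark that $\{\cF^{t,0}_s\}$-predictability implies $\{\cF^t_s\}$-progressive measurability gives $\alpha^0\in\cA^\nu_t$.

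Three small points to make explicit if you were to write this out in full. (i) The $\cF^{t,0}_{t_k}$-measurable representative of the dyadic average is a priori $\overline{\mathrm{conv}}(\cK)$-valued only $\bP$-a.s.; post-compose with the metric projection onto the closed convex set $\overline{\mathrm{conv}}(\cK)$ to make it so everywhere, without affecting the $\bP$-a.s.\ equality. (ii) Measurability of $\{(s,\omega):\limsup_j\hat\alpha^{(n_j)}_s(\omega)\in\cK\}$ tacitly assumes $\cK$ is Borel, which the paper does not state but clearly intends. (iii) The parenthetical alternative via the $\{\cF^t_s\}$-predictable projection is worded too strongly: for a merely progressively measurable $\alpha$, agreeing with ${}^p\alpha$ at every (predictable, hence every) stopping time does not by itself yield indistinguishability, since the optional section theorem requires both processes to be optional; one would first need to know that $\alpha$ admits an optional modification up to evanescence, which is a nontrivial point in its own right and in general only gives agreement $\ud s\times\bP$-a.e. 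Since this aside is not load-bearing it does not affect the proof, but it should not be presented as an easy shortcut.
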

\begin{proof}
See \cite[Lemma 1.99]{fabbri2017stochastic} or \cite[Lemma 2.4]{soner2011quasi}.
\end{proof}

A slightly stronger result, that we prove in the next lemma, holds for general left-continuous $\{\cF^t_s \}$-adapted processes and hence also for those in $\cX^\nu_t$ and $\cE^\nu_t$, for the state process $X$ and for the fuel process $Z$. An analogous result is stated without proof in \cite{dellacherie1978probabilities} after Theorem IV.78.

\begin{lemma}\label{Lemma:Predict0Indisting}
Let $\nu=(\Omega,\cF,\bP,\{\cF^t_s\},W)\in\cV_t$ and $\gamma$ be an $\bR^d$-valued, $\{\cF^t_s \}$-adapted process which is $\bP$-a.s.~left-continuous. Then, there exists an $\{\cF^{t,0}_s\}$-predictable process $\gamma^0$ which is indistinguishable from $\gamma$.
\end{lemma}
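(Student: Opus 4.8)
The plan is to reduce the statement to Lemma~\ref{Lemma:AlphaAlpha0} by a measurable-modification argument tailored to left-continuous adapted processes. First I would recall that any $\{\cF^t_s\}$-adapted, left-continuous process $\gamma$ is $\{\cF^t_s\}$-predictable, being the pointwise limit of left-continuous adapted step processes (e.g.\ the dyadic approximations $\gamma^{(n)}_s=\sum_k \gamma_{t_k}\ind_{(t_k,t_{k+1}]}(s)$ with $t_k=t+k2^{-n}(T-t)$), each of which is manifestly $\{\cF^t_s\}$-predictable. Hence $\gamma$ is measurable with respect to the $\bP$-augmented predictable $\sigma$-algebra $\cP^{\bP}_\Omega$ on $[t,T]\times\Omega$.

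Next I would invoke the standard fact (the ``predictable'' analogue of the result used to prove Lemma~\ref{Lemma:AlphaAlpha0}, e.g.\ in the spirit of \cite[Lemma 1.99]{fabbri2017stochastic} or \cite{dellacherie1978probabilities}) that the $\bP$-augmented predictable $\sigma$-algebra $\cP^{\bP}_\Omega$ generated by $\{\cF^t_s\}$ coincides, up to $\ud s\times\bP$-null sets, with the raw predictable $\sigma$-algebra $\cP_\Omega$ generated by $\{\cF^{t,0}_s\}$; equivalently, every $\cP^{\bP}_\Omega$-measurable process agrees $\ud s\times\bP$-a.e.\ with a $\cP_\Omega$-measurable one. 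Applying this to each coordinate of $\gamma$ produces an $\{\cF^{t,0}_s\}$-predictable process $\tilde\gamma$ with $\tilde\gamma=\gamma$, $\ud s\times\bP$-a.e. Since $\gamma$ is a.s.\ left-continuous, the set $\{s:\tilde\gamma_s\ne\gamma_s\}$ has zero Lebesgue measure for a.e.\ $\omega$, but this alone does not give indistinguishability — it only gives a modification in the $\ud s\times\bP$ sense.

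To upgrade from ``a.e.\ modification'' to ``indistinguishable,'' I would regularise $\tilde\gamma$ by passing to its left-limit process: set $\gamma^0_s:=\liminf_{q\uparrow s,\ q\in\bQ}\tilde\gamma_q$ for $s\in(t,T]$ and $\gamma^0_t:=\gamma_t$ (which is $\cF^{t,0}_t$-measurable since $\cF^{t,0}_t$ is trivial up to the convention, so in fact $\gamma_t$ is a.s.\ constant — more carefully, one takes $\gamma^0_t$ to be that constant). Because $\tilde\gamma$ is $\{\cF^{t,0}_s\}$-predictable and $\bQ$ is countable, $\gamma^0$ is again $\{\cF^{t,0}_s\}$-adapted and left-continuous, hence $\{\cF^{t,0}_s\}$-predictable. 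On the other hand, for the a.s.-$\omega$ for which $s\mapsto\gamma_s(\omega)$ is left-continuous and $\tilde\gamma_s(\omega)=\gamma_s(\omega)$ for Lebesgue-a.e.\ $s$, every $s$ is approached from the left by rationals $q$ with $\tilde\gamma_q(\omega)=\gamma_q(\omega)$, so $\gamma^0_s(\omega)=\lim_{q\uparrow s}\gamma_q(\omega)=\gamma_s(\omega)$ by left-continuity of $\gamma(\omega)$. Thus $\gamma^0$ and $\gamma$ are indistinguishable, completing the proof.

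The main obstacle I anticipate is precisely this last passage: the literature result gives only a $\ud s\times\bP$-a.e.\ modification, and one must exploit left-continuity (together with a fixed countable dense set of times, to preserve measurability after taking limits) to turn it into an everywhere-in-$s$, almost-surely statement. One subtlety to handle carefully is that $\tilde\gamma$ need not itself be left-continuous, so the $\liminf$-along-rationals regularisation is essential; and one should check that the exceptional $\bP$-null set outside of which the argument runs is indeed independent of $s$ (it is, being the union of the a.s.-left-continuity null set of $\gamma$ and the projection onto $\Omega$ of the $\ud s\times\bP$-null set $\{\tilde\gamma\ne\gamma\}$, which has $\bP$-null $\omega$-sections by Fubini).
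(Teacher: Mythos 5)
The proposal contains a genuine gap in the final passage that turns the $\ud s\times\bP$-a.e.\ modification into an indistinguishable one. You invoke the fact (correctly) that a progressively measurable process has an $\{\cF^{t,0}_s\}$-predictable version agreeing $\ud s\times\bP$-a.e., then define $\gamma^0_s=\liminf_{q\uparrow s,\,q\in\bQ}\tilde\gamma_q$ and assert that, for a.e.\ $\omega$, ``every $s$ is approached from the left by rationals $q$ with $\tilde\gamma_q(\omega)=\gamma_q(\omega)$.'' This does not follow from $\tilde\gamma_\cdot(\omega)=\gamma_\cdot(\omega)$ for Lebesgue-a.e.\ $s$: the set $\{s:\tilde\gamma_s(\omega)\ne\gamma_s(\omega)\}$ is Lebesgue-null but may well contain every rational $q<s$, since $\bQ$ is itself Lebesgue-null. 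So the liminf-along-rationals can fail to recover $\gamma_s(\omega)$, and indistinguishability is not established. (A secondary issue: the asserted pathwise left-continuity of $\gamma^0$, used to conclude predictability via ``adapted $+$ left-continuous,'' holds only for a.e.\ $\omega$, not for all $\omega$; predictability of $\gamma^0$ should instead be argued directly, e.g.\ by writing it as a limit of countable infima of predictable processes.)

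The paper's proof sidesteps the problem precisely by not passing through the $\ud s\times\bP$-a.e.\ version at all. It fixes the countable dyadic grid $\{t^n_i\}$ first, and for each fixed time $t^n_i$ replaces the $\cF^t_{t^n_i}$-measurable random variable $\gamma_{t^n_i}$ by a $\cF^{t,0}_{t^n_i}$-measurable one agreeing with it $\bP$-a.s.\ \emph{at that time}. This produces, after intersecting countably many full-measure sets, an $\omega$-set of probability one on which the piecewise-constant predictable approximants agree with those built from $\gamma$ at \emph{every} grid point; combined with left-continuity of $\gamma$, the $\liminf$ then equals $\gamma_s(\omega)$ for \emph{all} $s$. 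The key distinction is pointwise-in-time a.s.\ modification at a prescribed countable set of times versus $\ud s\times\bP$-a.e.\ modification, which gives no control on any fixed Lebesgue-null set of times. To repair your argument along your own lines, you would need to replace the liminf along rationals by something insensitive to Lebesgue-null sets of times, e.g.\ an essential liminf or averaged limits $\lim_{\eps\downarrow 0}\eps^{-1}\int_{s-\eps}^s\tilde\gamma_r\,\ud r$; but that introduces further integrability and measurability bookkeeping, and the paper's route is substantially cleaner.
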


\begin{proof}
Let $\Omega_0\subseteq\Omega$ such that $\bP(\Omega_0)=1$ and $s\mapsto \gamma_s(\omega)$ is left-continuous for every $\omega\in\Omega_0$. Let $n\in\bN$ and let $\{t_i^n\}_{i=0}^{2^n}$ be the corresponding dyadic partition of $[t,T]$, i.e., $t_i^n=t+i/2^n(T-t)$. By defining the sequence of processes
\begin{equation}
\gamma^n_s:=\gamma_t\ind_{\{t \}}(s)+\sum_{i=0}^{2^n-1}\gamma_{t_i^n}\ind_{(t_i^n,t^n_{i+1}]}(s), \quad s\in[t,T],
\end{equation}
we obtain that $\gamma_s^n(\omega)\to\gamma_s(\omega)$ for every $(s,\omega)\in[t,T]\times\Omega_0$ as $n\to\infty$. For every $n\in\bN$ and $i=0,1,\ldots,2^n$ there exists (see, e.g., \cite[Lemma 1.25]{kallenberg2006foundations}) $\gamma_{t_i^n}^0$ that is $\cF^{t,0}_{t_i^n}$-measurable and such that $\bP(\gamma_{t_i^n}=\gamma^0_{t_i^n})=1$. Thus, for every $n\in\bN$, $i=0,1,\ldots,2^n$ there exists $\Omega^{n,i}\subseteq\Omega$ with $\bP(\Omega^{n,i})=1$ such that for every $\omega\in\Omega^{n,i}$ we have $\gamma_{t_i^n}(\omega)=\gamma^0_{t_i^n}(\omega)$. Let $\bar{\Omega}:=\Omega_0\cap(\cap_{n,i}\Omega^{n,i})$, then $\bP(\bar{\Omega})=1$ and for every $\omega\in\bar{\Omega}$ we have $\gamma_{t_i^n}(\omega)=\gamma^0_{t_i^n}(\omega)$ for every $n\in\bN$, $i=0,1,\ldots,2^n$. By defining, for $n\in\bN$, the new sequence of processes
\begin{equation}
\gamma^{0,n}_s:=\gamma^0_t\ind_{\{t \}}(s)+\sum_{i=0}^{2^n-1}\gamma^0_{t_i^n}\ind_{(t_i^n,t^n_{i+1}]}(s), \quad s\in[t,T],
\end{equation}
we obtain that $\gamma^{0,n}$ is $\{\cF^{t,0}_s \}$-predictable and $\gamma^{0,n}_s(\omega)=\gamma^n_s(\omega)$ for every $(s,\omega)\in[t,T]\times\bar{\Omega}$, $n\in\bN$. Let
\begin{equation}
\gamma^0_s:=\liminf_{n\to\infty}\gamma^{0,n}_s, \quad s\in[t,T].
\end{equation}
Then, $\gamma^0$ is $\{\cF^{t,0}_s \}$-predictable and if $\omega\in\bar{\Omega}$ we have
\begin{equation}
\gamma^0_s(\omega):=\liminf_{n\to\infty}\gamma^{0,n}_s(\omega)=\liminf_{n\to\infty}\gamma^n_s(\omega)=\gamma_s(\omega), \quad \forall \: s\in[t,T],
\end{equation}
i.e., $\gamma^0$ and $\gamma$ are indistinguishable.
\end{proof}

Our next goal is to show that any $\{\cF^{t,0}_s\}$-predictable process on a reference probability system $\nu$ can be expressed as a deterministic, measurable function of the Brownian paths. For that we must introduce the {\em canonical} reference probability system:  
\begin{equation}\label{CanRefProbSyst}
\nu^*:=(\Omega^*,\cF^*,\bP^*,\{\cB^t_s\}_{s\in[t,T]},W^*),
\end{equation}
where
\begin{itemize} 
\item[(i)] $\Omega^*:=\{\omega\in C([t,T];\bR^{d'}): \omega(t)=0\}$; 
\item[(ii)] $\bP^*$ is the Wiener measure on $(\Omega^*,\cB(\Omega^*))$ that makes the canonical process $(s,\omega)\mapsto W^*_s(\omega)=\omega(s)$ a Brownian motion starting at time $t$; 
\item[(iii)] $\cF^*$ is the completion of $\cB(\Omega^*)$ with the $\bP^*$-null sets; 
\item[(iv)] $\cB^{t,0}_s:=\sigma(W^*_u: u\in [t,s])$ and $\cB^t_s$ is the augmentation of $\cB^{t,0}_s$ with the $\bP^*$-null sets.
\end{itemize}\medskip

Since $\Omega^*$ with the usual supremum norm is a Polish space, then $(\Omega^*,\cB(\Omega^*))$ is a standard measurable space (see, e.g., \cite[Chapter V, Theorem 2.2]{parthasarathy2005probability}) and so $\nu^*$ is a standard reference probability system. We denote by $\cP_{\Omega^*}$ the $\sigma$-algebra of $\{\cB^{t,0}_s\}$-predictable sets (see the last item in Section \ref{sec:notation}).	
\begin{lemma}\label{Lemma:CanonicalControl}
Let $\nu=(\Omega,\cF,\bP,\{\cF^t_s\},W)\in\cV_t$ and $\gamma$ be an $\bR^d$-valued, $\{\cF^{t,0}_s\}$-predictable process. There exists a $\cP_{\Omega^*}/\cB(\bR^d)$-measurable function $\psi:[t,T]\times\Omega^*\to\bR^d$ such that
\begin{equation}\label{CanonicalControl}
\gamma_s(\omega)=\psi(s,W_\cdot(\omega)), \quad (s,\omega)\in[t,T]\times\Omega.
\end{equation}
\end{lemma}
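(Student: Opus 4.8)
The plan is to reduce the statement to a standard functional-representation fact for predictable processes on Wiener space, via a monotone-class (Dynkin) argument. First I would set up the comparison map: since $\gamma$ is $\{\cF^{t,0}_s\}$-predictable and $\cF^{t,0}_s=\sigma(W_u:u\in[t,s])$, the process $\gamma$ ``sees only the Brownian paths''. Concretely, the map $\Phi:\Omega\to\Omega^*$ defined by $\Phi(\omega)=W_\cdot(\omega)$ (which is well defined because $s\mapsto W_s(\omega)$ is continuous for every $\omega$ by our standing convention, and $W_t=0$) is measurable from $(\Omega,\cF^{t,0}_T)$ to $(\Omega^*,\cB(\Omega^*))$, and it pulls back the canonical raw filtration to the raw Brownian filtration: $\Phi^{-1}(\cB^{t,0}_s)=\cF^{t,0}_s$ for each $s$, and hence $\Phi^{-1}(\cP_{\Omega^*})=\cP_\Omega$, where $\cP_\Omega$ is the $\{\cF^{t,0}_s\}$-predictable $\sigma$-algebra on $[t,T]\times\Omega$. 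The claim \eqref{CanonicalControl} is then exactly the assertion that every bounded (hence every) $\cP_\Omega/\cB(\bR^d)$-measurable function factors through the measurable map $\mathrm{id}\times\Phi:[t,T]\times\Omega\to[t,T]\times\Omega^*$.

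Second I would prove that factorisation by the usual monotone class scheme. It suffices to treat $d=1$ (apply componentwise). For the generating sets of $\cP_\Omega$ recalled in item (q) of Section \ref{sec:notation} — namely $(s,u]\times A$ with $A\in\cF^{t,0}_{s}$, and $\{t\}\times A$ with $A\in\cF^{t,0}_t$ — we have $A=\Phi^{-1}(B)$ for some $B\in\cB^{t,0}_s$ (resp.\ $\cB^{t,0}_t$) by the identity $\Phi^{-1}(\cB^{t,0}_s)=\cF^{t,0}_s$, so the indicator $\ind_{(s,u]\times A}(r,\omega)=\ind_{(s,u]\times B}(r,W_\cdot(\omega))$ has the desired form with $\psi=\ind_{(s,u]\times B}\in\cP_{\Omega^*}$. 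The collection of functions admitting such a representation is a vector space, closed under bounded monotone limits (take pointwise $\liminf$ of the $\psi$'s, which stays $\cP_{\Omega^*}$-measurable, exactly as in the proof of Lemma \ref{Lemma:Predict0Indisting}), and contains the indicators of a $\pi$-system generating $\cP_\Omega$; by the functional monotone class theorem it contains all bounded $\cP_\Omega$-measurable functions, and then all of them by truncation $\gamma=\lim_N (\gamma\wedge N)\vee(-N)$.

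The only genuinely technical point is the measurable-selection-free step $\Phi^{-1}(\cB^{t,0}_s)=\cF^{t,0}_s$ and its consequence $\Phi^{-1}(\cP_{\Omega^*})=\cP_\Omega$. The first equality is immediate from the definitions: $\cF^{t,0}_s=\sigma(W_u:u\le s)=\sigma(\{\Phi^{-1}(W^*_u\in C):u\le s,\ C\in\cB(\bR^{d'})\})=\Phi^{-1}(\sigma(W^*_u:u\le s))=\Phi^{-1}(\cB^{t,0}_s)$, using that $\sigma$-algebra and preimage commute. For the predictable $\sigma$-algebras, one notes that $\mathrm{id}\times\Phi$ maps the $\pi$-system of generators of $\cP_\Omega$ listed above into the corresponding generators of $\cP_{\Omega^*}$ and, conversely, the preimage of each generator of $\cP_{\Omega^*}$ is a generator of $\cP_\Omega$; since preimages commute with arbitrary $\sigma$-operations, $\Phi^{-1}(\cP_{\Omega^*})=\cP_\Omega$ follows. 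I expect the main obstacle to be purely bookkeeping: making sure the augmentation by $\bP$-null sets plays no role (it does not, because we work throughout with the \emph{raw} filtrations $\{\cF^{t,0}_s\}$ and $\{\cB^{t,0}_s\}$, having already passed to a raw-predictable representative via Lemmas \ref{Lemma:AlphaAlpha0} and \ref{Lemma:Predict0Indisting}), and that the representation \eqref{CanonicalControl} is required to hold for \emph{every} $(s,\omega)$, not merely $\bP$-a.e.\ — which is automatic here since the factorisation through $\mathrm{id}\times\Phi$ is a genuine pointwise identity, no null set being discarded in the monotone-class construction.
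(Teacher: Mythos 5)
Your proof is correct and takes essentially the same route as the paper: both establish $\cP_\Omega=(\mathrm{id}\times\Phi)^{-1}(\cP_{\Omega^*})$ by matching the generating rectangles of the two predictable $\sigma$-algebras and then factor $\gamma$ through $\mathrm{id}\times\Phi$. The only difference is that where the paper invokes the Doob--Dynkin factorisation lemma (cited as Kallenberg, Lemma 1.13) for the last step, you re-derive it from scratch by a functional monotone-class argument; the substance is identical.
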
	
\begin{proof}
Define $\beta:[t,T]\times\Omega\to [t,T]\times\Omega^*$ by $\beta(s,\omega):=(s,W_\cdot(\omega))$. For $t\le r< s <u\le T$ and $B\in\cB(\bR^d)$, sets of the form
\begin{equation}\label{genset}
\begin{aligned}
B^r_{s,u}&=(s,u]\times\{\omega\in\Omega^*: W^*_r(\omega)\in B\},\\
B_t&=\{t \}\times \{\omega\in\Omega^*: W^*_t(\omega)\in B\},
\end{aligned}
\end{equation}
generate $\cP_{\Omega^*}$.
By definition of the map $\beta$, we have $\beta^{-1}(B^r_{s,u})=A^r_{s,u}$ and $\beta^{-1}(B_t)=A_t$ where
\begin{align*}
A^r_{s,u}&=(s,u]\times\{\omega\in\Omega: W_r(\omega)\in B\},\\
A_t&=\{t\}\times\{\omega\in\Omega: W_t(\omega)\in B\}.
\end{align*}
Since the sets of the form $A^r_{s,u}$ and $A_t$ generate $\cP_\Omega$, we obtain that $\cP_{\Omega}=\beta^{-1}(\cP_{\Omega^*})$.

By assumption, $\gamma$ is $\cP_{\Omega}/\cB(\bR^d)$-measurable, then it is $\sigma(\beta)/\cB(\bR^d)$-measurable. Thus, there must exist a $\cP_{\Omega^*}/\cB(\bR^d)$-measurable function $\psi:[t,T]\times\Omega^*\to\bR^d$ such that $\gamma=\psi\circ\beta$ as claimed in \eqref{CanonicalControl} (see, e.g, \cite[Lemma 1.13]{kallenberg2006foundations}). 
\end{proof}
	
Next we show how the map $\psi:[t,T]\times\Omega^*\to\bR^d$ in \eqref{CanonicalControl} can be used to connect admissible controls in different reference probability systems that have the same law. The first lemma below is a standard result on distributional properties induced by measurable maps.
\begin{lemma}\label{lem:lawpsi}
Fix $n\in\bN$ and let $\psi:[t,T]\times\Omega^*\to\bR^n$ be $\cP_{\Omega^*}/\cB(\bR^n)$-measurable. For any two  reference probability systems $\nu=(\Omega,\cF,\bP,\{\cF^t_s\},W)$ and $\tilde\nu=(\tilde\Omega,\tilde\cF,\tilde\bP,\{\tilde{\cF}^t_s\},\tilde W)$ set $\gamma_s=\psi(s,W_\cdot)$ and $\tilde\gamma_s=\psi(s,\tilde W_\cdot)$, $s\in[t,T]$. Then, we have
$\cL_\bP(\gamma)=\cL_{\tilde\bP}(\tilde\gamma)$.
\end{lemma}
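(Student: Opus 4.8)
The plan is to show that $\gamma$ and $\tilde\gamma$ are, after composing the measurable map $\psi$ with the respective Brownian paths, images of the \emph{same} measure-theoretic object, so that equality in law follows from the pushforward/change-of-variables principle. Concretely, both $W$ and $\tilde W$ are $d'$-dimensional Brownian motions starting at time $t$ with $\bP(W_t=0)=\tilde\bP(\tilde W_t=0)=1$, and I will assume (as is standard, cf.\ the reduction in Section~\ref{sec:problem}) that their paths are continuous for every $\omega$. Hence they induce maps $W_\cdot:\Omega\to\Omega^*$ and $\tilde W_\cdot:\tilde\Omega\to\Omega^*$, and by the defining property of Wiener measure $\bP^*$ (item (ii) in \eqref{CanRefProbSyst}) both pushforwards coincide: $\cL_\bP(W_\cdot)=\bP^*=\cL_{\tilde\bP}(\tilde W_\cdot)$ on $(\Omega^*,\cB(\Omega^*))$.

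First I would make precise in which space the ``law'' of the process $\gamma=(\gamma_s)_{s\in[t,T]}$ is taken: since $\gamma$ is $\cP_\Omega/\cB(\bR^n)$-measurable it is in particular jointly measurable, so one may view it as a random element of the product space $((\bR^n)^{[t,T]},\bigotimes_{s\in[t,T]}\cB(\bR^n))$, i.e.\ $\cL_\bP(\gamma)$ is the law of the map $\omega\mapsto(\gamma_s(\omega))_{s\in[t,T]}$; this is the convention recalled in item~(d) of Section~\ref{sec:notation} (see the pointer to \cite[Page 24]{kallenberg2006foundations}). Next I would introduce the auxiliary map $\Psi:\Omega^*\to(\bR^n)^{[t,T]}$ defined by $\Psi(\omega^*):=(\psi(s,\omega^*))_{s\in[t,T]}$; since $\psi$ is $\cP_{\Omega^*}/\cB(\bR^n)$-measurable, for each fixed $s$ the section $\omega^*\mapsto\psi(s,\omega^*)$ is $\cB(\Omega^*)$-measurable, hence $\Psi$ is measurable from $(\Omega^*,\cB(\Omega^*))$ into the product $\sigma$-algebra. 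Then the key identity is simply the chain rule for compositions: $\gamma_\cdot=\Psi\circ (W_\cdot)$ as maps on $\Omega$, and likewise $\tilde\gamma_\cdot=\Psi\circ(\tilde W_\cdot)$ on $\tilde\Omega$.

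With these ingredients the conclusion is immediate from the pushforward formula: for any $A$ in the product $\sigma$-algebra,
\begin{equation*}
\cL_\bP(\gamma)(A)=\bP\big((\Psi\circ W_\cdot)^{-1}(A)\big)=\big(\cL_\bP(W_\cdot)\big)\big(\Psi^{-1}(A)\big)=\bP^*\big(\Psi^{-1}(A)\big),
\end{equation*}
and the same computation with $\tilde\bP,\tilde W$ in place of $\bP,W$ yields $\cL_{\tilde\bP}(\tilde\gamma)(A)=\bP^*(\Psi^{-1}(A))$ as well, so $\cL_\bP(\gamma)=\cL_{\tilde\bP}(\tilde\gamma)$. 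This is exactly the ``standard result on distributional properties induced by measurable maps'' alluded to in the statement, and it can be cited in the form of \cite[Lemma 1.22]{kallenberg2006foundations} (image measures are transported by measurable maps). The only genuinely delicate point—and the place where a reader might object—is the bookkeeping about \emph{which} $\sigma$-algebra one uses on the path space of $\gamma$ and the joint measurability of $\psi$ needed to make $\Psi$ measurable; once one agrees to the convention of item~(d) and uses that sections of a jointly measurable function are measurable, there is no real obstacle, and the proof is a one-line application of the change-of-variables formula. I would therefore keep the write-up short, emphasising the identity $\gamma_\cdot=\Psi\circ W_\cdot$ and the equality $\cL_\bP(W_\cdot)=\bP^*=\cL_{\tilde\bP}(\tilde W_\cdot)$ as the two substantive inputs.
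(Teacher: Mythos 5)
Your proof is correct, but it takes a genuinely different and more direct route than the paper's. The paper proves equality of finite-dimensional distributions via a functional monotone class argument: it checks the statement for finite products of indicators of generating sets of $\cP_{\Omega^*}$, observes that the class of bounded $\psi$'s satisfying the conclusion is closed under pointwise limits (by dominated convergence applied to characteristic functions), invokes the monotone class theorem, and finally removes the boundedness restriction by truncation. Your proof instead packages the whole thing as one application of the change-of-variables formula: both $W_\cdot$ and $\tilde W_\cdot$ push $\bP$, respectively $\tilde\bP$, to the same Wiener measure $\bP^*$ on $\Omega^*$, and $\gamma_\cdot=\Psi\circ W_\cdot$ and $\tilde\gamma_\cdot=\Psi\circ\tilde W_\cdot$ for the single measurable map $\Psi(\omega^*)=(\psi(s,\omega^*))_{s\in[t,T]}$ into $(\bR^n)^{[t,T]}$ with the product $\sigma$-algebra; hence both laws equal $\bP^*\circ\Psi^{-1}$. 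The one point that needs to be stated (and you do state it) is that $\Psi$ is measurable, which follows since $\cP_{\Omega^*}\subset\cB([t,T])\otimes\cB(\Omega^*)$, so sections of $\psi$ are $\cB(\Omega^*)$-measurable and measurability into the product $\sigma$-algebra is checked coordinatewise. Your argument is shorter, dispenses with monotone class bookkeeping and the truncation step, and in fact yields the slightly stronger conclusion that $\cL_\bP(\gamma)$ is explicitly the pushforward $\bP^*\circ\Psi^{-1}$, independent of the reference probability system. A minor cosmetic point: $W_\cdot$ takes values in $\Omega^*$ only $\bP$-a.s.\ (the paper only assumes $\bP(W_t=0)=1$, not $W_t(\omega)=0$ for all $\omega$), but this does not affect the law and can be dispatched by a one-sentence remark or by redefining $W$ on a null set.
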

\begin{proof}
We use a monotone class theorem. Let $\cQ^0$ be the collection of all $\cP_{\Omega^*}/\cB(\bR^n)$-measurable {\em bounded} functions $\psi$ and let $\cQ^1\subset\cQ^0$ be the subset of those $\psi$'s for which $\cL_\bP(\gamma)=\cL_{\tilde\bP}(\tilde\gamma)$. Clearly $\cQ^1$ contains indicators of the sets that generate $\cP_{\Omega^*}$ and functions that are finite products of such indicators. Let us denote the latter set by $\cQ^2$, so that $\cQ^2$ is closed under multiplication and $\cQ^2\subset\cQ^1\subset \cQ^0$. The set $\cQ^1$ is closed under pointwise convergence: if $\psi_k\to \psi$ pointwise in $[t,T]\times\Omega^*$ and $\cL_\bP(\gamma^k)=\cL_{\tilde\bP}(\tilde{\gamma}^k)$ for $\gamma^k_s=\psi_k(s,W_\cdot)$ and $\tilde{\gamma}^k_s=\psi_k(s,\tilde{W}_\cdot)$, then for any finite collection of times $\{s_\ell\}_{\ell=0}^m\subset[t,T]$ and any sequence of vectors $\{\lambda_\ell\}_{\ell=0}^m\subset\bR^{n}$ we have
\[
\bE[\e^{i\sum_{\ell=0}^m\langle\lambda_\ell,\gamma_{s_\ell}\rangle}]=\lim_{k\to\infty}\bE[\e^{i\sum_{\ell=0}^m\langle\lambda_\ell,\gamma^k_{s_\ell}\rangle}]=\lim_{k\to\infty}\bE[\e^{i\sum_{\ell=0}^m\langle\lambda_\ell,\tilde{\gamma}^k_{s_\ell}\rangle}]=\bE[\e^{i\sum_{\ell=0}^m\langle\lambda_\ell,\tilde\gamma_{s_\ell}\rangle}],
\]
by dominated convergence. Therefore, by \cite[Th.\ I.21, p.14]{dellacherie1978probabilities} $\cQ^1$ contains all bounded $\sigma(\cQ^2)$-measurable functions and since $\sigma(\cQ^2)=\cP_{\Omega^*}$ we obtain the claim for bounded and $\cP_{\Omega^*}/\cB(\bR^n)$-measurable functions. 

Given a generic $\cP_{\Omega^*}/\cB(\bR^n)$-measurable function $\psi$ which is not necessarily bounded, we can approximate it with bounded functions $\psi^M$ by truncation so that $\psi^M\to \psi$ pointwise as $M\to\infty$. Then,
\[
\bE[\e^{i\sum_{\ell=0}^m\langle\lambda_\ell,\gamma_{s_\ell}\rangle}]=\lim_{M\to\infty}\bE[\e^{i\sum_{\ell=0}^m\langle\lambda_\ell,\gamma^M_{s_\ell}\rangle}]=\lim_{M\to\infty}\bE[\e^{i\sum_{\ell=0}^m\langle\lambda_\ell,\tilde{\gamma}^M_{s_\ell}\rangle}]=\bE[\e^{i\sum_{\ell=0}^m\langle\lambda_\ell,\tilde\gamma_{s_\ell}\rangle}],
\]
where $\gamma^M_s=\psi^M(s,W_\cdot)$ and $\tilde{\gamma}^M_s=\psi^M(s,\tilde{W}_\cdot)$, and the second equality holds by the first part of the proof.
\end{proof}
In the next lemma we construct admissible controls that are equivalent in law under different reference probability systems.
 	
\begin{lemma}\label{Lemma:ChangeOfControls}
Let $\nu=(\Omega,\cF,\bP,\{\cF^t_s\},W)$  and $\tilde{\nu}=(\tilde{\Omega},\tilde{\cF},\tilde{\bP},\{\tilde{\cF}^t_s\},\tilde{W})$ be two distinct reference probability systems in $\cV_t$. Given $(\tau,\alpha,\xi)\in\AA^\nu_t$ (or in $\AA^\nu_{t,\bar z}$), there exists $(\tilde{\tau},\tilde{\alpha},\tilde{\xi})\in\AA^{\tilde\nu}_t$ (or in $\AA^{\tilde\nu}_{t,\bar z}$) that are $\{\tilde{\cF}^{t,0}_s\}$-predictable and such that
\begin{equation}\label{ControlsEqualinLaw}
\cL_\bP(\tau,\alpha,\xi,W)=\cL_{\tilde{\bP}}(\tilde{\tau},\tilde{\alpha},\tilde{\xi},\tilde{W}).
\end{equation}
\end{lemma}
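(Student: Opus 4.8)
The idea is to realise all three control objects $(\tau,\alpha,\xi)$ as deterministic measurable functionals of the Brownian path and then transport these functionals to $\tilde\nu$. First I would apply Lemma \ref{Lemma:AlphaAlpha0} to replace $\alpha$ by a $\{\cF^{t,0}_s\}$-predictable version $\alpha^0$ (equal $\ud s\times\bP$-a.e., so the induced state process and objective are unchanged by Lemma \ref{lemma:Xind}), and apply Lemma \ref{Lemma:Predict0Indisting} to both $\xi$ and the stopping-time process $\eta^\tau=(\ind_{\{s>\tau\}})_{s\in[t,T]}\in\cE^\nu_t\subset\cX^\nu_t$, obtaining $\{\cF^{t,0}_s\}$-predictable processes $\xi^0$ and $\eta^0$ indistinguishable from $\xi$ and $\eta^\tau$ respectively. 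Then Lemma \ref{Lemma:CanonicalControl} produces $\cP_{\Omega^*}/\cB(\bR^\bullet)$-measurable maps $\psi_\alpha,\psi_\xi,\psi_\eta$ on $[t,T]\times\Omega^*$ with $\alpha^0_s(\omega)=\psi_\alpha(s,W_\cdot(\omega))$, and similarly for $\xi^0,\eta^0$. (It is convenient to package $(\alpha^0,\xi^0,\eta^0)$ into a single $\bR^{l+d+1}$-valued predictable process and get one map $\psi$, so that the joint law is preserved, not just the marginals.)

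Next I would \emph{define} the candidate controls on $\tilde\nu$ by pushing these maps forward along $\tilde W$: set $\tilde\alpha_s:=\psi_\alpha(s,\tilde W_\cdot)$, $\tilde\xi_s:=\psi_\xi(s,\tilde W_\cdot)$, $\tilde\eta_s:=\psi_\eta(s,\tilde W_\cdot)$, and $\tilde\tau:=\tau^{\tilde\eta}=\inf\{s\ge t:\tilde\eta_s=1\}\wedge T$ as in \eqref{eq:etatau}. By construction these are $\{\tilde\cF^{t,0}_s\}$-predictable. The equality in law \eqref{ControlsEqualinLaw} then follows from Lemma \ref{lem:lawpsi} applied to the joint map: since $\cL_{\bP}(W)=\cL_{\tilde\bP}(\tilde W)$ (both Wiener measure starting at $t$), Lemma \ref{lem:lawpsi} gives $\cL_{\bP}(\alpha^0,\xi^0,\eta^0,W)=\cL_{\tilde\bP}(\tilde\alpha,\tilde\xi,\tilde\eta,\tilde W)$, where I include $W$ itself in the tuple by noting $W_s=\mathrm{pr}_s(W_\cdot)$ is also a (predictable, indeed continuous) functional of the path; and since $(\eta,W)\mapsto(\tau^\eta,W)$ is a measurable map applied to both sides, passing to the image measure yields $\cL_{\bP}(\tau,\alpha^0,\xi^0,W)=\cL_{\tilde\bP}(\tilde\tau,\tilde\alpha,\tilde\xi,\tilde W)$. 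Finally, since $\alpha^0=\alpha$ ($\ud s\times\bP$-a.e.) and $\xi^0=\xi$ ($\bP$-a.s.\ indistinguishable), and since the law of $(\tau,\alpha,\xi,W)$ is insensitive to such modifications (for $\alpha$ one should record that the relevant functionals of $\alpha$ only see it up to $\ud s\times\bP$-null sets — e.g.\ the path law of the integral $\int_t^\cdot h(s,\alpha_s)\ud s$), we get \eqref{ControlsEqualinLaw} for the original treble.

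The remaining point is admissibility: I must check $(\tilde\tau,\tilde\alpha,\tilde\xi)\in\AA^{\tilde\nu}_t$ (resp.\ $\AA^{\tilde\nu}_{t,\bar z}$). Adaptedness and progressive measurability are automatic from predictability w.r.t.\ $\{\tilde\cF^{t,0}_s\}\subset\{\tilde\cF^t_s\}$; that $\tilde\alpha$ takes values in $\cK$, that $\tilde\xi$ is $\bR^d$-valued, and (in the finite-fuel case) that $\tilde\bP(V_{[t,T]}(\tilde\xi)\le\bar z)=1$ all follow because these are events of probability one under $\bP$ expressed purely through $\psi$ and the path, hence have the same probability under $\tilde\bP$ by Lemma \ref{lem:lawpsi} (applied to suitable bounded measurable functionals, e.g.\ $\ind_{\{V_{[t,T]}(\cdot)\le\bar z\}}$). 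Left-continuity, bounded variation, $\tilde\xi_t=0$, and $\tilde\tau\in[t,T]$ are likewise $\bP$-a.s.\ path-properties preserved in law; and the integrability $\bE_{\tilde\bP}[|V_{[t,T]}(\tilde\xi)|^p]<\infty$ in the infinite-fuel case follows from equality in law (monotone convergence applied to truncations $|V_{[t,T]}(\xi)|^p\wedge M$). \textbf{The main obstacle} is the bookkeeping around the classical control $\alpha$: unlike $\xi$ and $\eta$, it is only defined up to $\ud s\times\bP$-negligible sets, so one cannot assert $\tilde\alpha=\alpha$ in any pointwise sense; one must be careful that \eqref{ControlsEqualinLaw} is interpreted via the law of $\alpha$ as a measurable-space-valued random element (as in item (d) of Section \ref{sec:notation}), and that Lemma \ref{Lemma:AlphaAlpha0} together with Lemma \ref{lemma:Xind} genuinely makes the objective $J$ and the stopped region $\rho_\cO$ depend on $\alpha$ only through this equivalence class — this is exactly the reason the lemma is phrased in terms of joint laws including $W$ rather than pathwise identities.
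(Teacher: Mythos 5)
Your overall strategy matches the paper's: pass to $\{\cF^{t,0}_s\}$-predictable representatives (Lemmas \ref{Lemma:AlphaAlpha0}, \ref{Lemma:Predict0Indisting}), realise them as functionals $\psi_\bullet(\cdot,W_\cdot)$ via Lemma \ref{Lemma:CanonicalControl}, push forward along $\tilde W$, and invoke Lemma \ref{lem:lawpsi}. However, there is a genuine gap at the step where you write that ``left-continuity, bounded variation, $\tilde\xi_t=0$\dots are likewise $\bP$-a.s.\ path-properties preserved in law.'' Equality of laws in the sense of Lemma \ref{lem:lawpsi} and of item (d) in Section \ref{sec:notation} is equality of finite-dimensional distributions, i.e.\ equality of pushforward measures on the cylinder $\sigma$-algebra of $(\bR^d)^{[t,T]}$. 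The events ``the path is left-continuous'' and ``the path is non-decreasing on $[t,T]$'' are uncountable intersections and are \emph{not} cylinder-measurable, so they do not automatically have the same probability. Concretely: setting $\hat\xi^\pm_s:=\psi^\pm_\xi(s,\tilde W_\cdot)$ produces a process whose one-dimensional marginals match those of $\xi^\pm$, but for $\tilde\bP$-a.e.\ $\tilde\omega$ the function $s\mapsto\hat\xi^\pm_s(\tilde\omega)$ has no reason to be left-continuous or monotone, because $\psi^\pm_\xi$ is merely $\cP_{\Omega^*}$-measurable and carries no pointwise regularity in $s$. (A cheap example: $\psi(s,\omega^*)=\ind_{\{s=f(\omega^*)\}}$ with $f$ having a continuous law gives a process indistinguishable from $0$ on one system but a.s.\ discontinuous when evaluated naively on any system.)

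This is precisely where the paper does real work, and your argument omits it. The paper first uses Lemma \ref{lem:lawpsi} only on countably many events indexed by dyadic rationals $D$ (monotonicity of increments, coincidence with left limits, and in the finite-fuel case the bound $V_{[t,T]}\le\bar z$), which lie in the cylinder $\sigma$-algebra and hence have full $\tilde\bP$-measure. It then builds a $\{\tilde\cF^{t,0}_s\}$-predictable \emph{modification} $\tilde\xi^\pm$ of $\hat\xi^\pm$ by dyadic discretisation and a $\liminf$, verifies by hand that this modification is $\tilde\bP$-a.s.\ left-continuous, non-decreasing, and vanishes at $t$, and only then proves \eqref{ControlsEqualinLaw} — via characteristic functions, approximating general times by dyadic ones from the left and using left-continuity plus dominated convergence on both sides. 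Related to this, the paper transports the two Jordan components $\xi^+$ and $\xi^-$ \emph{separately} so that each one can be made monotone; transporting $\xi$ itself, as you propose, does not directly produce a pair of non-decreasing processes realising the Jordan decomposition in $\tilde\nu$. The same modification argument is needed for $\eta^\tau$. To close the gap in your proof you would need to reproduce (or cite) this construction of the left-continuous, componentwise monotone modification; without it the claim that $(\tilde\tau,\tilde\alpha,\tilde\xi)$ is an admissible treble is unjustified.

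Two smaller remarks. Your observation about including $W$ in the tuple via $W_s=\mathrm{pr}_s(W_\cdot)$ is correct and matches the paper. Your point about $\alpha$ being defined only $\ud s\times\bP$-a.e.\ is well taken, but once one passes to the predictable representative $\alpha^0$ and records (as the paper does, with a stated abuse of notation) that all downstream objects depend on $\alpha$ only through this class, there is nothing further to worry about: the equality of laws is established for the $\{\cF^{t,0}_s\}$-predictable representatives, and indistinguishability (for $\xi$, $\eta^\tau$) or $\ud s\times\bP$-a.e.\ equality (for $\alpha$) preserves finite-dimensional distributions.
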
	

\begin{proof}
Let us first recall $\eta^\tau_s=\mathbf{1}_{\{s>\tau\}}$. Thanks to Lemma \ref{Lemma:AlphaAlpha0} and \ref{Lemma:Predict0Indisting} we can pass to the $\{\cF^{t,0}_s\}$-predictable representatives $(\eta^{\tau,0},\alpha^0,\xi^0)$ of $(\eta^\tau,\alpha,\xi)$. For simplicity, we denote $(\eta^{\tau,0},\alpha^0,\xi^0)=(\eta^{\tau},\alpha,\xi)$ with a slight abuse of notation.
	
By Lemma \ref{Lemma:CanonicalControl}, there exist a $\cP_{\Omega^*}/\cB(\bR^l)$-measurable function $\psi_\alpha:[t,T]\times\Omega^*\to\cK$, two $\cP_{\Omega^*}/\cB(\bR^d)$-measurable functions $\psi_\xi^\pm:[t,T]\times\Omega^*\to\bR^d$ and a $\cP_{\Omega^*}/\cB(\bN)$-measurable\footnote{Recall that $\cB(\bN)$ is the Borel sigma-algebra for the discrete topology.} function $\psi_\eta:[t,T]\times\Omega^*\to\{0,1\}$ such that, for all $(s,\omega)\in[t,T]\times\Omega$,
\begin{equation}\label{eq:psi1}
\alpha_s(\omega)=\psi_\alpha(s,W_\cdot(\omega)),\quad\xi^\pm_s(\omega)=\psi^\pm_\xi(s,W_\cdot(\omega)),\quad\eta^\tau_s(\omega)=\psi_{\eta^\tau}(s,W_\cdot(\omega)).
\end{equation}
Clearly the map $(s,\tilde{\omega})\mapsto(s,\tilde{W}_\cdot(\tilde{\omega}))$ is $\cP_{\tilde{\Omega}}/\cP_{\Omega^*}$-measurable as in Lemma \ref{Lemma:CanonicalControl}. Thus, the processes defined for all $(s,\tilde{\omega})\in[t,T]\times\tilde{\Omega}$ by
\begin{align}\label{eq:psi2}
\tilde{\alpha}_s(\omega):=\psi_\alpha(s,\tilde{W}_\cdot(\tilde{\omega})), \quad \hat{\xi}^\pm_s(\tilde{\omega}):=\psi^\pm_\xi(s,\tilde{W}_\cdot(\tilde{\omega})), \quad \hat{\eta}_s(\tilde{\omega}):=\psi_{\eta^\tau}(s,\tilde{W}_\cdot(\tilde{\omega})), 
\end{align}
are $\{\tilde{\cF}^{t,0}_s\}$-predictable. Hence, $\tilde{\alpha}\in\cA^{\tilde{\nu}}_t$ and it remains to construct $(\tilde\eta^\tau,\tilde\xi)\in\cE^{\tilde \nu}_t\times\cX^{\tilde \nu}_t$ (or in $\cE^{\tilde \nu}_t\times\cX^{\tilde \nu}_{t}(\bar z)$).
		
By left-continuity of $\xi$, we can define
\[
\psi^\pm_\xi(s-,W_\cdot(\omega)):=\lim_{u\uparrow s}\psi^\pm_\xi(u,W_\cdot(\omega))=\lim_{u\uparrow s}\xi^\pm_u(\omega)=\xi^\pm_{s-}(\omega)=\xi^\pm_{s}(\omega)=\psi^\pm_\xi(s,W_\cdot(\omega)).
\]
Let $D$ be the set of dyadic rationals in $[t,T]$, i.e., $D:=\{t_i^n, n\in\bN, i\in\{0,\ldots,2^n\} \}$ with $t^n_i=t+i/2^n(T-t)$. Notice that $D$ is countable and dense in $[t,T]$. Then, for every $s,t,u$ in $D$ with $s<t$, we have
\begin{align}\label{OmegaTildeknm}
1&=\bP\big(\xi^\pm_{s}\leq\xi^\pm_{t},\xi^\pm_{u}=\xi^\pm_{u-}\big)=\bP\big(\psi_\xi^\pm(s,W_\cdot)\leq\psi_\xi^\pm(t,W_\cdot),\psi_\xi^\pm(u,W_\cdot)=\psi_\xi^\pm(u-,W_\cdot)\big) \\
&=\tilde{\bP}\big(\psi_\xi^\pm(s,\tilde{W}_\cdot)\leq\psi_\xi^\pm(t,\tilde{W}_\cdot),\psi_\xi^\pm(u,\tilde{W}_\cdot)=\psi_\xi^\pm(u-,\tilde{W}_\cdot)\big)=\tilde{\bP}\big(\hat{\xi}^\pm_{s}\leq\hat{\xi}^\pm_{t},\hat{\xi}^\pm_{u}=\hat{\xi}^\pm_{u-}\big),\nonumber
\end{align}
where in the third equality we have used Lemma \ref{lem:lawpsi}. That is, for every $s,t,u\in D$ with $s<t$ the event $\tilde{\Omega}^{s,t}_u=\{\omega\in\tilde{\Omega}:\hat{\xi}^\pm_{s}(\omega)\leq\hat{\xi}^\pm_{t}(\omega),\,\hat{\xi}^\pm_{u}(\omega)=\hat{\xi}^\pm_{u-}(\omega)\}$ has full measure. Thus, letting $\hat{\Omega}:=\cap_{s,t,u\in D}\tilde{\Omega}^{s,t}_u$ we have $\tilde{\bP}(\hat{\Omega})=1$ and 
\[
\text{$\hat{\xi}^\pm_{s}(\omega)\leq\hat{\xi}^\pm_{t}(\omega)$ and $\hat{\xi}^\pm_{u}(\omega)=\hat{\xi}^\pm_{u-}(\omega)$ for every $s,t,u\in D$ with $s\leq u$,}
\] 
for every $\omega\in\hat{\Omega}$. Now, for every $n\in\bN$, we set
\[
\tilde{\xi}^{\pm,n}_s:=\hat{\xi}^\pm_t \ind_{\{t \}}(s)+\sum_{i=0}^{2^n-1}\hat{\xi}^\pm_{t^n_i}\ind_{(t^n_i,t^n_{i+1}]}(s), \quad s\in[t,T].
\]
Then, $\tilde{\xi}_s^{\pm,n}$ is $\{\tilde{\cF}^{t,0}_s \}$-predictable and, for every $\omega\in\hat{\Omega}$, we have $\lim_{n\to\infty}\tilde{\xi}_s^{\pm,n}(\omega)=\hat{\xi}_s^{\pm}(\omega)$ for $s\in D$. We define $\tilde{\xi}^\pm_s(\omega):=\liminf_{n\to\infty}\tilde{\xi}^{\pm,n}_s(\omega)$ for $(s,\omega)\in[t,T]\times\Omega$,
so that $\tilde{\xi}^\pm$ is $\{\tilde{\cF}^{t,0}_s \}$-predictable and
\begin{align}\label{TildeXi}
\tilde\bP(\tilde \xi^\pm_s=\hat \xi^\pm_s,\:s\in D)=1.
\end{align}
For each $\omega\in\hat \Omega$ the mapping $D\ni s\mapsto\hat \xi^\pm_s(\omega)$ is non-decreasing (componentwise) so that $\tilde \xi^\pm$ is $\tilde\bP$-a.s.\ non-decreasing in time, with $\tilde \xi^\pm_0=0$. Moreover, $\tilde\xi^{\pm,n}_s(\omega)\le \tilde\xi^{\pm,n+1}_s(\omega)$ for all $n\in\bN$ and all $(s,\omega)\in[t,T]\times\hat \Omega$, with the inequality understood componentwise. Thus, for any $(s,\omega)\in(t,T]\times\hat \Omega$ we have
\begin{align*}
\lim_{\eps\downarrow 0}\tilde\xi^\pm_{s-\eps}(\omega)=\lim_{\eps\downarrow 0}\lim_{n\uparrow\infty}\tilde\xi^{\pm,n}_{s-\eps}(\omega)=\lim_{n\uparrow\infty}\lim_{\eps\downarrow 0}\tilde\xi^{\pm,n}_{s-\eps}(\omega)=\lim_{n\uparrow\infty}\tilde\xi^{\pm,n}_{s}(\omega)=\tilde\xi^{\pm}_{s}(\omega),
\end{align*}
where the limits can be swapped by monotonicity (again componentwise). This proves that $\tilde\xi^\pm$ is $\bP$-a.s.\ left-continuous.

Setting $\tilde \xi=\tilde\xi^+-\tilde\xi^-$, it remains to prove integrability (or finite-fuel) property of $\tilde \xi$.
If $\xi\in\cX^\nu_t(\bar z)$, one simply adds the $\bP$-a.s.\ condition $V_{[t,T]}(\xi)=\sum_{i=1}^d\big(\xi^{i,+}_T+\xi^{i,-}_T\big)\le \bar z$ in \eqref{OmegaTildeknm} and thus obtains that $V_{[t,T]}(\tilde{\xi})=\sum_{i=1}^d\big(\tilde{\xi}^{i,+}_T+\tilde{\xi}^{i,-}_T\big)\le \bar z$, $\tilde{\bP}$-a.s. Hence, $\tilde\xi\in\cX^{\tilde\nu}_{t}(\bar z)$.
If instead $\xi\in\cX^\nu_t$, we must check that $\tilde{\bE}[|V_{[t,T]}(\tilde{\xi})|^p]<\infty$ where $\tilde{\bE}$ denotes the expectation under the measure $\tilde{\bP}$. This will follow once we prove \eqref{ControlsEqualinLaw}. 
By an analogous construction to the one for $\tilde{\xi}$, we also find a $\{\tilde{\cF}^{t,0}_s \}$-predictable process $\tilde{\eta}^\tau\in\cE^{\tilde{\nu}}_t$ from which we obtain the stopping time $\tilde \tau$ as in \eqref{eq:etatau}. 

Next we show that the equality in law \eqref{ControlsEqualinLaw} holds, 
by proving that all finite-dimensional distributions of $(\alpha,\xi,\eta^\tau,W)$ and $(\tilde\alpha,\tilde\xi,\tilde\eta^\tau,\tilde W)$ are the same (see, e.g., \cite[Proposition 2.2.]{kallenberg2006foundations}). Fix a finite sequence of times $\{s_k \}_{k=1}^n\subseteq [t,T]$ and sequences of vectors $\{\lambda^\alpha_k\}_{k=1}^n\subset \bR^l$, $\{\lambda^\xi_k\}_{k=1}^n\subset \bR^d$, $\{\lambda^\eta_k\}_{k=1}^n\subset \bR$ and $\{\gamma_k\}_{k=1}^n\subset \bR^{d'}$. Then, by dominated convergence and left-continuity of $\eta^\tau$ and $\xi$, we obtain
\begin{align*}
&\bE\Big[\exp\Big(i\sum_{k=1}^n(\langle\lambda^\alpha_k,\alpha_{s_k}\rangle+\langle\lambda^\xi_k,\xi_{s_k}\rangle+\langle\lambda^\eta_k,\eta_{s_k}\rangle+\langle\gamma_k,W_{s_k}\rangle)\Big)\Big]\\
&=\lim_{\substack{\{r^j_k\}\uparrow \{s_k\}\\ \{r^j_k\}\in D}}\bE\Big[\exp\Big(i\sum_{k=1}^n(\langle\lambda^\alpha_k,\alpha_{s_k}\rangle+\langle\lambda^\xi_k,\xi_{r^j_k}\rangle+\langle\lambda^\eta_k,\eta_{r^j_k}\rangle+\langle\gamma_k,W_{s_k}\rangle)\Big)\Big],
\end{align*}
where the limit is understood as a limit over sequences $\{r^j_k\}_{j\in\bN}$ such that $r^j_k\uparrow s_k$ for each $k$ and $r^j_k\in D$ for each $(j,k)$. By \eqref{TildeXi} we know that 
\[
\big(\tilde \xi_s(\omega),\tilde\eta^\tau_s(\omega)\big)=\big(\psi_\xi(s,\tilde W_\cdot(\omega)),\psi_{\eta^\tau}(s,\tilde W_\cdot(\omega))\big),\quad \text{for $(s,\omega)\in D\times\hat \Omega$}.
\]
Therefore by \eqref{eq:psi1}, \eqref{eq:psi2} and Lemma \ref{lem:lawpsi} we have
\begin{align*}
&\lim_{\substack{\{r^j_k\}\uparrow \{s_k\}\\ \{r^j_k\}\in D}}\bE\Big[\exp\Big(i\sum_{k=1}^n\big(\langle\lambda^\alpha_k,\alpha_{s_k}\rangle+\langle\lambda^\xi_k,\xi_{r^j_k}\rangle+\langle\lambda^\eta_k,\eta_{r^j_k}\rangle+\langle\gamma_k,W_{s_k}\rangle\big)\Big)\Big]\\
&=\lim_{\substack{\{r^j_k\}\uparrow \{s_k\}\\ \{r^j_k\}\in D}}\tilde\bE\Big[\exp\Big(i\sum_{k=1}^n\big(\langle\lambda^\alpha_k,\tilde\alpha_{s_k}\rangle+\langle\lambda^\xi_k,\tilde\xi_{r^j_k}\rangle+\langle\lambda^\eta_k,\tilde\eta_{r^j_k}\rangle+\langle\gamma_k,\tilde W_{s_k}\rangle\big)\Big)\Big]\\
&=\tilde\bE\Big[\exp\Big(i\sum_{k=1}^n\big(\langle\lambda^\alpha_k,\tilde\alpha_{s_k}\rangle+\langle\lambda^\xi_k,\tilde\xi_{s_k}\rangle+\langle\lambda^\eta_k,\tilde\eta_{s_k}\rangle+\langle\gamma_k,\tilde W_{s_k}\rangle\big)\Big)\Big],
\end{align*}
where the final equality is by dominated convergence and $\tilde\bP$-a.s.\ left-continuity of $(\tilde \xi,\tilde \eta^\tau)$. Then \eqref{ControlsEqualinLaw} holds. That also implies $\tilde{\bE}[|V_{[t,T]}(\tilde{\xi})|^p]=\bE[|V_{[t,T]}(\xi)|^p]$, so that $\tilde\xi\in\cX^{\tilde \nu}_t$ if $\xi\in\cX^{\nu}_t$.
\end{proof}

The equality in law under different reference probability systems extends also to the processes $X$ and $Z$ and the stopping time $\rho_\cO$, as illustrated in the next lemma, where we use the same notation as in Lemma \ref{Lemma:ChangeOfControls}. The proof of the lemma relies on a result by Kurtz \cite{kurtz2007yamada} about strong solutions of stochastic equations, that generalises the classical results by Yamada and Watanabe \cite[Ch.\ III]{ikeda1989stochastic}. Informally, \cite{kurtz2007yamada} states that if $Y$ is a stochastic input and $X$ is a stochastic output of an equation of the form
\begin{equation}\label{eq:KurtzEq}
\Gamma(X,Y)=0,
\end{equation}
then {\em pointwise uniqueness} (in the language of \cite{kurtz2007yamada}) implies $X=\Psi(Y)$ for some measurable function $\Psi$. Moreover, $\Psi$ is uniquely determined in the sense that if $(\tilde X,\tilde Y)$ is another pair solving \eqref{eq:KurtzEq} with $\tilde Y=Y$ in law, then $\tilde X=\Psi(\tilde Y)$. We will apply this result in the case that $X$ and $Y$ are c\`agl\`ad processes and, therefore, pointwise uniqueness coincides with {\em pathwise uniqueness}. 
\begin{lemma}\label{Coroll:EquivInLaw}
	Let $\nu,\tilde{\nu}\in\cV_t$, and take $(\tau,\alpha,\xi)\in\AA_t^\nu$ (or in $\AA_{t,\bar{z}}^\nu$) and $(\tilde{\tau},\tilde{\alpha},\tilde{\xi})\in\AA^{\tilde{\nu}}_t$ (or in $\AA_{t,\bar{z}}^{\tilde{\nu}}$) as in Lemma \ref{Lemma:ChangeOfControls}. Then, we have
	\begin{equation}\label{eq:corLaw}
	\cL_\bP\big(\tau,\alpha,\xi,W,X^{\nu\alpha,\xi},Z^{\nu\xi},\rho_{\cO}^{\nu;\alpha,\xi}\big)=\cL_{\tilde{\bP}}\big(\tilde{\tau},\tilde{\alpha},\tilde{\xi},\tilde W,X^{\tilde{\nu};\tilde{\alpha},\tilde{\xi}},Z^{\tilde{\nu};\tilde{\xi}},\rho_{\cO}^{\tilde{\nu};\tilde{\alpha},\tilde{\xi}}\big).
	\end{equation}
\end{lemma}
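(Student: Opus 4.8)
The plan is to reduce the statement to the equality in law of the driving data already obtained in Lemma \ref{Lemma:ChangeOfControls}, by exhibiting $X$, $Z$ and $\rho_\cO$ as \emph{deterministic} measurable functionals of $(\alpha,\xi,W)$ that are the \emph{same} in the two reference probability systems, and then pushing forward the common law. For $Z$ this is immediate: by \eqref{eq:Z} we have $Z^{\nu;\xi}_s=z+V_{[t,s]}(\xi)=z+\sum_{i=1}^d(\xi^{i,+}_s+\xi^{i,-}_s)$, a fixed measurable map of the path of $\xi$ (equivalently of $\xi^\pm$) that does not involve $\nu$. The representation of $X$ is where the result of \cite{kurtz2007yamada} quoted in the excerpt comes in, and the exit time $\rho_\cO$ is the only point requiring genuine care.

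First, Lemma \ref{Lemma:ChangeOfControls} gives $\cL_\bP(\tau,\alpha,\xi,W)=\cL_{\tilde\bP}(\tilde\tau,\tilde\alpha,\tilde\xi,\tilde W)$, so in particular $Y:=(\alpha,\xi,W)$ and $\tilde Y:=(\tilde\alpha,\tilde\xi,\tilde W)$ have the same law. Write the SDE \eqref{StateProcess} as $\Gamma(X,Y)=0$ with
\[
\Gamma(X,Y)_s:=X_s-x-\int_t^s\mu(r,X_r,\alpha_r)\,\ud r-\int_t^s\sigma(r,X_r,\alpha_r)\,\ud W_r-\xi_s,\qquad s\in[t,T].
\]
By Assumption \ref{ass:Xind} there is pathwise uniqueness of the c\`agl\`ad solution, which for c\`agl\`ad $X$ and $Y$ is pointwise uniqueness in the sense of \cite{kurtz2007yamada}. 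Hence there is a measurable map $\Psi$ with $X^{\nu;\alpha,\xi}=\Psi(Y)$, $\bP$-a.s.; and since $(X^{\tilde\nu;\tilde\alpha,\tilde\xi},\tilde Y)$ solves the same equation on $\tilde\nu$ (the pair $(\tilde\alpha,\tilde\xi)$ is admissible by Lemma \ref{Lemma:ChangeOfControls}, so Assumption \ref{ass:Xind} applies) and $\tilde Y$ has the same law as $Y$, the uniqueness clause of \cite{kurtz2007yamada} gives $X^{\tilde\nu;\tilde\alpha,\tilde\xi}=\Psi(\tilde Y)$, $\tilde\bP$-a.s., with the \emph{same} $\Psi$. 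Therefore
\[
(\tau,\alpha,\xi,W)\longmapsto\big(\tau,\alpha,\xi,W,\,\Psi(\alpha,\xi,W),\,z+V_{[t,\cdot]}(\xi)\big)
\]
is one fixed measurable map applied to two random elements with equal law, and it follows that $(\tau,\alpha,\xi,W,X^{\nu;\alpha,\xi},Z^{\nu;\xi})$ and $(\tilde\tau,\tilde\alpha,\tilde\xi,\tilde W,X^{\tilde\nu;\tilde\alpha,\tilde\xi},Z^{\tilde\nu;\tilde\xi})$ have the same law.

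It remains to append $\rho_\cO$. By \eqref{eq:rho} (or its finite-fuel analogue) $\rho_\cO$ is the first exit time from $\cO$ of the c\`agl\`ad process $(X,Z)$, capped at $T$, so one must check that this exit time is a measurable functional of the path of $(X,Z)$, the same in both systems. This is the delicate step: for a general Borel set $\cO$ the exit time is not literally a functional of the raw path, and one argues as is standard for hitting/exit times of c\`agl\`ad (or c\`adl\`ad) processes — writing the events $\{\rho_\cO\le a\}$, $a\in[t,T]$, in terms of countably many coordinates of the path via rational times together with left limits, using left-continuity of $(X,Z)$. When $\cO$ is open (which covers the motivating examples such as the solvency region) this is straightforward; in general one reduces to this case or uses the debut theorem after the usual completion, which is harmless here since only the \emph{law} of $\rho_\cO$ is at stake. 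Granting this, $\rho_\cO$ is produced from $(X,Z)$ by the same measurable map on both sides, and adjoining it via the pushforward argument yields \eqref{eq:corLaw}. The main obstacle is precisely this measurability of $\rho_\cO$ as a $\nu$-independent functional of the state path, together with checking that the c\`agl\`ad framework of \cite{kurtz2007yamada} applies cleanly with $Y=(\alpha,\xi,W)$ as input; everything else is bookkeeping with image measures.
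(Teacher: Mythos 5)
Your proposal is correct and follows essentially the same route as the paper: factor $X^{\nu;\alpha,\xi}$ through Kurtz's pointwise-uniqueness representation $X=\Psi(\alpha,\xi,W)$, factor $Z^{\nu;\xi}$ through the variation functional of the c\`agl\`ad path $\xi$, factor $\rho_\cO$ through the exit-time functional of the c\`agl\`ad pair $(X,Z)$, and then push forward the common law from Lemma \ref{Lemma:ChangeOfControls}. The measurability caveat you flag for the exit time from a general Borel $\cO$ is real but is present in the paper's proof as well (which simply appeals to the equality in law \eqref{EqualLaw0} at that point), and your remark that only the law of $\rho_\cO$ is needed — so that universal measurability via the debut theorem, or rational-time reduction for open $\cO$, suffices — is the correct way to close that step.
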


\begin{proof}
	For simplicity and in keeping with the proof of Lemma \ref{Lemma:ChangeOfControls}, we denote $(\eta^{\tau,0},\alpha^0,\xi^0,X^0)$ by $(\eta^\tau,\alpha,\xi,X)$, with a slight abuse of notation.
	
	By Assumption \ref{ass:Xind}, $X^{\alpha,\xi}$ is the pathwise unique solution of \eqref{StateProcess}, which can be written in the form of \eqref{eq:KurtzEq} with $Y=(\alpha,\xi,W)$. Then, by \cite[Corollary 2.8]{kurtz2007yamada}, $X^{\alpha,\xi}=\Psi_X(\alpha,\xi,W)$ for some measurable function $\Psi_X$. Moreover, $\cL_\bP(\alpha,\xi,W)=\cL_{\tilde{\bP}}(\tilde{\alpha},\tilde{\xi},\tilde{W})$ implies that $X^{\tilde{\alpha},\tilde{\xi}}=\Psi_X(\tilde{\alpha},\tilde{\xi},\tilde{W})$ solves \eqref{StateProcess} in the reference probability system $\tilde\nu$.
	
	Next, recall from the construction in Lemma \ref{Lemma:ChangeOfControls} that $\hat{\xi}_s=\psi_\xi(s,W_\cdot)$ and that $\hat \xi_s(\omega)=\tilde\xi_s(\omega)$ for $(s,\omega)\in D\times \hat \Omega$ with $D$ countable and dense in $[0,T]$ and $\tilde\bP(\hat \Omega)=1$. Then, since $\tilde \xi$ is also left-continuous $\tilde{\bP}$-a.s., we obtain
	\begin{align*}
	V_{[t,s]}(\tilde \xi)=&\sum_{i=1}^d\sup_{t\le r\le s}\big(\tilde\xi^{i,+}_r+\tilde\xi^{i,-}_r\big)=\sum_{i=1}^d\sup_{r\in[t,s]\cap D}\big(\tilde\xi^{i,+}_r+\tilde\xi^{i,-}_r\big)\\
	=&\sum_{i=1}^d\sup_{r\in[t,s]\cap D}\big(\hat \xi^{i,+}_r+\hat \xi^{i,-}_r\big)=:V_{[t,s]\cap D}(\hat \xi),\quad\text{for all $s\in[t,T]$, $\tilde\bP$-a.s.}
	\end{align*}
	Therefore, $Z^{\tilde\nu;\tilde\xi}_s=\psi_Z(s,\tilde W_\cdot)$ for some $\cP_{\Omega^*}/\cB(\bR_+)$-measurable function $\psi_Z$, because $\hat \xi_s=\psi_\xi(s,\tilde W_\cdot)$. 
	By left-continuity of the control $\xi$ in $\nu$, we also have $V_{[t,s]}(\xi)=V_{[t,s]\cap D}(\xi)$ and therefore it is clear that also $Z^{\nu;\xi}_s=\psi_Z(s,W_\cdot)$ for the same $\psi_Z$, because $\xi_s=\psi_\xi(s, W_\cdot)$.
	
	Recalling that $\cL_\bP(\tau,\alpha,\xi,W)=\cL_{\tilde{\bP}}(\tilde\tau,\tilde{\alpha},\tilde{\xi},\tilde{W})$, it is now clear that
	\begin{equation}\label{EqualLaw0}
	\cL_\bP\big(\tau,\alpha,\xi,W,X^{\nu;\alpha,\xi},Z^{\nu;\xi}\big)=\cL_{\tilde{\bP}}\big(\tilde{\tau},\tilde{\alpha},\tilde{\xi},\tilde W,X^{\tilde{\nu};\tilde{\alpha},\tilde{\xi}},Z^{\tilde{\nu};\tilde{\xi}}\big).
	\end{equation}
	Finally, to show that \eqref{eq:corLaw} holds, we set 
	\[
	\Sigma=(\eta^\tau,\alpha,\xi,W,X^{\nu;\alpha,\xi},Z^{\nu;\xi}) \quad \text{and} \quad \tilde{\Sigma}=(\tilde{\eta}^\tau,\tilde{\alpha},\tilde{\xi},\tilde W,X^{\tilde{\nu};\tilde{\alpha},\tilde{\xi}},Z^{\tilde{\nu};\tilde{\xi}})
	\]
	and we equivalently prove that the finite-dimensional distributions of $(\Sigma,\rho^{\alpha,\xi;\nu})$ and $(\tilde{\Sigma},\rho^{\tilde{\alpha},\tilde{\xi};\tilde{\nu}})$ coincide. 
	For every $t\leq s_1< s_2<\ldots<s_n\leq T$, $B\in\cB(\bR^N)$ with $N=n(2d+d'+l+2)$ and $u\in[t,T]$, we have
	\[
	\bP\big((\Sigma_{s_1},\ldots, \Sigma_{s_n})\in B, \rho^{\nu;\alpha,\xi}\geq u\big)=\tilde{\bP}\big((\tilde{\Sigma}_{s_1},\ldots, \tilde{\Sigma}_{s_n})\in B, \rho^{\tilde{\nu};\tilde{\alpha},\tilde{\xi}}\geq u\big).
	\]
	By definition \eqref{eq:rho} and left-continuity of $(X,Z)$, this is equivalent to showing that
	\begin{align*}
	\bP&\big((\Sigma_{s_1},\ldots \Sigma_{s_n})\in B, \{(X^{\nu;\alpha,\xi}_s,Z^{\nu;\xi}_s)\in\cO, \, \forall \: s\in[t,u) \}\big)\\
	&=\tilde{\bP}\big((\tilde{\Sigma}_{s_1},\ldots \tilde{\Sigma}_{s_n})\in B, \{(X^{\tilde{\nu};\tilde{\alpha},\tilde{\xi}}_s,Z^{\tilde{\nu};\tilde{\xi}}_s)\in\cO, \, \forall \: s\in[t,u) \}\big),
	\end{align*}
	which holds thanks to the equality in law \eqref{EqualLaw0} and the proof is complete.
\end{proof}

We are now ready to prove independence of the value function from the reference probability system, i.e., \eqref{IndepRefProbSyst}.
	
\begin{proposition}\label{Prop:IndepProbSyst}
Fix $(t,x)\in [0,T]\times \bR^d$ and let $\nu,\tilde{\nu}\in\cV_t$. For every $(\tau,\alpha,\xi)\in\AA^\nu_t$ there exists $(\tilde{\tau},\tilde{\alpha},\tilde{\xi})\in\AA^{\tilde{\nu}}_t$ such that
\begin{equation}\label{JequalJtilde}
J^\nu_{t,x}(\tau,\alpha,\xi)=J^{\tilde{\nu}}_{t,x}(\tilde{\tau},\tilde{\alpha},\tilde{\xi}).
\end{equation}
Consequently, for every $(t,x)\in[0,T]\times\bR^d$ and $\nu\in\cV_t$, we have that
\begin{equation}
w^\nu(t,x)=v(t,x).
\end{equation}
\end{proposition}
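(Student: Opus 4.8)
The plan is to leverage the machinery built in Lemmas \ref{Lemma:AlphaAlpha0}--\ref{Coroll:EquivInLaw} to transfer any admissible treble from one reference probability system to another while preserving the value of the objective functional. First I would observe that \eqref{JequalJtilde} is the crux: once it is established, the equality $w^\nu(t,x)=v(t,x)$ follows almost immediately. Indeed, $w^\nu(t,x)\le v(t,x)$ is trivial from the definition \eqref{WeakFormulation} of $v$ as a supremum over $\nu\in\cV_t$. For the reverse inequality, fix an arbitrary $\tilde\nu\in\cV_t$ and an arbitrary treble $(\tilde\tau,\tilde\alpha,\tilde\xi)\in\AA^{\tilde\nu}_t$; by applying \eqref{JequalJtilde} with the roles of $\nu$ and $\tilde\nu$ swapped, we obtain a treble $(\tau,\alpha,\xi)\in\AA^{\nu}_t$ with $J^{\tilde\nu}_{t,x}(\tilde\tau,\tilde\alpha,\tilde\xi)=J^{\nu}_{t,x}(\tau,\alpha,\xi)\le w^\nu(t,x)$. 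Taking the supremum over $(\tilde\tau,\tilde\alpha,\tilde\xi)$ gives $w^{\tilde\nu}(t,x)\le w^\nu(t,x)$, and then the supremum over $\tilde\nu$ yields $v(t,x)\le w^\nu(t,x)$.

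For the proof of \eqref{JequalJtilde}, I would start from a given $(\tau,\alpha,\xi)\in\AA^\nu_t$ and invoke Lemma \ref{Lemma:ChangeOfControls} to produce $(\tilde\tau,\tilde\alpha,\tilde\xi)\in\AA^{\tilde\nu}_t$, all $\{\tilde\cF^{t,0}_s\}$-predictable, satisfying the equality in law \eqref{ControlsEqualinLaw}. Then Lemma \ref{Coroll:EquivInLaw} upgrades this to the joint equality in law \eqref{eq:corLaw}, which includes the controlled state process $X$ and the exit time $\rho_\cO$. The objective $J^\nu_{t,x}(\tau,\alpha,\xi)$ in \eqref{eq:J} (or equivalently in the form $I^\nu_{t,x}(\eta^\tau,\alpha,\xi)$ of \eqref{TauEtaEquivalence}) is an expectation of a functional of the tuple $(\tau,\alpha,\xi,W,X^{\nu;\alpha,\xi},\rho_\cO)$; since the integrands $f$, $c_\pm$, $g_1$, $g_2$ are Borel-measurable, the running cost and terminal terms depend on this tuple in a measurable way, and the Lebesgue--Stieltjes integrals against $\ud\xi^\pm$ can be written as measurable functionals of the paths. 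Hence the expected value depends on the tuple only through its law, and \eqref{eq:corLaw} gives $J^\nu_{t,x}(\tau,\alpha,\xi)=J^{\tilde\nu}_{t,x}(\tilde\tau,\tilde\alpha,\tilde\xi)$. (Integrability of the negative part, needed to make the expectations well-defined, is guaranteed by Assumption \ref{ass:obj0}, and it too is preserved under the equality in law.)

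The main obstacle I anticipate is the careful verification that the Lebesgue--Stieltjes integrals $\int_{[t,\tau\wedge\rho_\cO)}\langle c_\pm(s,X_s),\ud\xi^\pm_s\rangle$ are genuinely measurable functions of the law of $(\xi,X)$ — that is, that they can be realised as a fixed measurable functional applied pathwise, rather than depending on null-set choices that differ across the two probability systems. This is handled by approximating the integral via Riemann--Stieltjes sums along the dyadic mesh $D$ used in Lemma \ref{Lemma:ChangeOfControls}: for each $n$, $\sum_{i}\langle c_\pm(t^n_i,X_{t^n_i}),\xi^\pm_{t^n_{i+1}}-\xi^\pm_{t^n_i}\rangle\ind_{\{t^n_{i+1}\le\tau\wedge\rho_\cO\}}$ is a fixed measurable functional of finitely many coordinates, and by left-continuity of $X$ and $\xi^\pm$ and bounded (or finite-fuel) variation, these sums converge $\bP$-a.s.\ (and $\tilde\bP$-a.s.) to the respective integrals. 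Since convergence holds along the same deterministic functional on both systems, the limiting integrals are equal-in-distribution jointly with everything else, and the resulting identity of expectations in \eqref{JequalJtilde} follows. In the finite-fuel case one simply carries the additional coordinate $Z$ through all of the above, using that $Z^{\nu;\xi}_s=\psi_Z(s,W_\cdot)$ as established in Lemma \ref{Coroll:EquivInLaw}.
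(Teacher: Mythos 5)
Your proposal follows the paper's proof essentially verbatim: invoke Lemmas \ref{Lemma:ChangeOfControls} and \ref{Coroll:EquivInLaw} to obtain the joint equality in law \eqref{eq:corLaw}, observe that $J^\nu_{t,x}$ is the expectation of a measurable functional of that tuple so that \eqref{JequalJtilde} follows, and then conclude $w^\nu=v$ by the symmetric supremum argument over $\nu,\tilde\nu$. The only (welcome) difference is that you spell out, via dyadic Riemann--Stieltjes sums, why the Stieltjes integrals in $J$ are realised as fixed pathwise measurable functionals of $(\tau,\alpha,\xi,X,\rho_\cO)$ — a point the paper leaves implicit by simply asserting the existence of the function $\varphi$.
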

\begin{proof}
Fix an arbitrary treble $(\tau,\alpha,\xi)\in\AA^\nu_t$ and construct an admissible treble $(\tilde\tau,\tilde\alpha,\tilde\xi)\in\AA^{\tilde\nu}_t$ as in Lemma \ref{Lemma:ChangeOfControls} so that \eqref{ControlsEqualinLaw} holds. Then, Lemma \ref{Coroll:EquivInLaw} guarantees that 
\begin{equation}\label{EquivLaw}
\cL_\bP(\tau,\alpha,\xi,X^{\nu;\alpha,\xi},\rho^{\nu;\alpha,\xi}_\cO)=\cL_{\tilde{\bP}}(\tilde{\tau},\tilde{\alpha},\tilde{\xi},X^{\tilde{\nu};\tilde{\alpha},\tilde{\xi}},\rho^{\tilde{\nu};\tilde{\alpha},\tilde{\xi}}_\cO).
\end{equation}
Since we can write
\[
J_{t,x}^\nu(\tau,\alpha,\xi)=\bE\big[\varphi(\tau,\alpha,\xi,X^{\nu;\alpha,\xi},\rho^{\nu;\alpha,\xi}_\cO)\big]\quad \text{and} \quad J^{\tilde{\nu}}_{t,x}(\tilde{\tau},\tilde{\alpha},\tilde{\xi})=\tilde{\bE}\big[\varphi(\tilde{\tau},\tilde{\alpha},\tilde{\xi},X^{\tilde{\nu};\tilde{\alpha},\tilde{\xi}},\rho^{\tilde{\nu};\tilde{\alpha},\tilde{\xi}}_\cO)\big],
\]
for some measurable function $\varphi$, then by \eqref{EquivLaw} we obtain \eqref{JequalJtilde}.
This also yields that
\begin{equation}
J^\nu_{t,x}(\tau,\alpha,\xi)=J^{\tilde{\nu}}_{t,x}(\tilde{\tau},\tilde{\alpha},\tilde{\xi})\leq w^{\tilde{\nu}}(t,x), \quad \text{for all $(\tau,\alpha,\xi)\in\AA^\nu_t$}.
\end{equation}
Hence $w^\nu(t,x)\le w^{\tilde\nu}(t,x)$ and symmetrically we obtain $w^{\tilde{\nu}}(t,x)\leq w^\nu(t,x)$ so that $w^\nu(t,x)= w^{\tilde{\nu}}(t,x)$. Notice that the latter equality holds for any pair $\nu,\tilde\nu\in\cV_t$. Since the equality holds for any pair $\nu,\tilde\nu\in\cV_t$ then $v(t,x)=\sup_{\tilde{\nu}\in\cV_t}w^{\tilde{\nu}}(t,x)=w^\nu(t,x)$.
\end{proof}

By an identical argument of proof we also obtain the finite-fuel version of the above proposition. One only needs to recall the process $Z$ from \eqref{eq:Z} and notice that 
\[
\cL_\bP(\tau,\alpha,\xi,X^{\nu;\alpha,\xi}, Z^{\nu;\xi},\rho^{\nu;\alpha,\xi}_\cO)=\cL_{\tilde{\bP}}(\tilde{\tau},\tilde{\alpha},\tilde{\xi},X^{\tilde{\nu};\tilde{\alpha},\tilde \xi},Z^{\tilde \nu;\tilde \xi},\rho^{\tilde{\nu};\tilde{\alpha},\tilde{\xi}}_\cO),
\]
by Lemma \ref{Coroll:EquivInLaw}.

\begin{proposition}\label{prop:PS2}
Fix $(t,x,z)\in [0,T]\times \bR^d\times[0,\bar z]$ and let $\nu,\tilde{\nu}\in\cV_t$. For every $(\tau,\alpha,\xi)\in\AA^\nu_{t,\bar z-z}$ there exists $(\tilde{\tau},\tilde{\alpha},\tilde{\xi})\in\AA^{\tilde{\nu}}_{t,\bar z-z}$ such that
\begin{equation}\label{JequalJtilde2}
J^\nu_{t,x,z}(\tau,\alpha,\xi)=J^{\tilde{\nu}}_{t,x,z}(\tilde{\tau},\tilde{\alpha},\tilde{\xi}).
\end{equation}
Consequently, for every $(t,x,z)\in[0,T]\times\bR^d\times[0,\bar z]$ and $\nu\in\cV_t$, we have that
\begin{equation}
w^\nu(t,x,z)=v(t,x,z).
\end{equation}
\end{proposition}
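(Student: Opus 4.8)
The plan is to run the argument of Proposition \ref{Prop:IndepProbSyst} line by line, now carrying the fuel process $Z$ through all the equalities in law and invoking the finite-fuel clauses of the lemmas already established. First I would fix an arbitrary treble $(\tau,\alpha,\xi)\in\AA^\nu_{t,\bar z-z}$ and apply the finite-fuel part of Lemma \ref{Lemma:ChangeOfControls} — with the deterministic cap $\bar z-z$ in place of $\bar z$, the construction there being insensitive to the value of the cap — to produce $\{\tilde{\cF}^{t,0}_s\}$-predictable controls $(\tilde{\tau},\tilde{\alpha},\tilde{\xi})\in\AA^{\tilde{\nu}}_{t,\bar z-z}$ with
\begin{equation*}
\cL_\bP(\tau,\alpha,\xi,W)=\cL_{\tilde{\bP}}(\tilde{\tau},\tilde{\alpha},\tilde{\xi},\tilde{W}).
\end{equation*}

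Next I would invoke the finite-fuel form of Lemma \ref{Coroll:EquivInLaw} to upgrade this to
\begin{equation*}
\cL_\bP\big(\tau,\alpha,\xi,X^{\nu;\alpha,\xi},Z^{\nu;\xi},\rho_\cO^{\nu;\alpha,\xi}\big)=\cL_{\tilde{\bP}}\big(\tilde{\tau},\tilde{\alpha},\tilde{\xi},X^{\tilde{\nu};\tilde{\alpha},\tilde{\xi}},Z^{\tilde{\nu};\tilde{\xi}},\rho_\cO^{\tilde{\nu};\tilde{\alpha},\tilde{\xi}}\big),
\end{equation*}
where now $\rho_\cO$ denotes the exit time of the pair $(X,Z)$ from $\cO\subset\bR^d\times[0,\bar z]$. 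Since $f$, $g_1$, $g_2$ and $c_\pm$ are Borel-measurable and the Lebesgue--Stieltjes integrals in \eqref{eq:JZ}, stopped at the random time $\tau\wedge\rho_\cO$, are measurable functionals of the triple of paths $(X,\xi^+,\xi^-)$, there is a single measurable functional $\varphi$, not depending on the reference probability system, such that $J^\nu_{t,x,z}(\tau,\alpha,\xi)=\bE\big[\varphi(\tau,\alpha,\xi,X^{\nu;\alpha,\xi},Z^{\nu;\xi},\rho_\cO^{\nu;\alpha,\xi})\big]$ and $J^{\tilde{\nu}}_{t,x,z}(\tilde{\tau},\tilde{\alpha},\tilde{\xi})=\tilde{\bE}\big[\varphi(\tilde{\tau},\tilde{\alpha},\tilde{\xi},X^{\tilde{\nu};\tilde{\alpha},\tilde{\xi}},Z^{\tilde{\nu};\tilde{\xi}},\rho_\cO^{\tilde{\nu};\tilde{\alpha},\tilde{\xi}})\big]$; the equality in law then gives \eqref{JequalJtilde2}.

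To finish, \eqref{JequalJtilde2} yields $J^\nu_{t,x,z}(\tau,\alpha,\xi)=J^{\tilde{\nu}}_{t,x,z}(\tilde{\tau},\tilde{\alpha},\tilde{\xi})\le w^{\tilde{\nu}}(t,x,z)$ for every $(\tau,\alpha,\xi)\in\AA^\nu_{t,\bar z-z}$; passing to the supremum gives $w^\nu(t,x,z)\le w^{\tilde{\nu}}(t,x,z)$, and the reverse inequality follows by swapping the roles of $\nu$ and $\tilde{\nu}$, whence $w^\nu(t,x,z)=w^{\tilde{\nu}}(t,x,z)$ for every pair $\nu,\tilde{\nu}\in\cV_t$. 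Since $v(t,x,z)=\sup_{\tilde{\nu}\in\cV_t}w^{\tilde{\nu}}(t,x,z)$, this yields $w^\nu(t,x,z)=v(t,x,z)$.

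As the argument is a transcription of the infinite-fuel case, no genuine obstacle is expected; the only point deserving attention is the construction of the common functional $\varphi$, that is, checking that every term of \eqref{eq:JZ}, and in particular the path-dependent integrals against $\ud\xi^\pm$ stopped at $\tau\wedge\rho_\cO$, is a fixed Borel map of $(\tau,\alpha,\xi,X,Z,\rho_\cO)$ independent of $\nu$ — exactly the verification performed implicitly in the proof of Proposition \ref{Prop:IndepProbSyst}, with the extra coordinate $Z$ now entering the arguments of $f$, $g_1$, $g_2$ (and, under the more general cost $c_\pm(s,X_s,Z_s)$, of $c_\pm$).
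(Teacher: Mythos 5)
Your argument is correct and is exactly the one the paper itself sketches: apply the finite-fuel clauses of Lemma \ref{Lemma:ChangeOfControls} and Lemma \ref{Coroll:EquivInLaw} (which are already stated for a generic deterministic cap, so replacing $\bar z$ with $\bar z - z$ is costless), read off $J^\nu_{t,x,z}=J^{\tilde\nu}_{t,x,z}$ from the equality in law of $(\tau,\alpha,\xi,W,X,Z,\rho_\cO)$ via a common measurable functional $\varphi$, and conclude by taking suprema and symmetrising in $\nu,\tilde\nu$. The paper compresses this into a short remark before the proposition statement; you have simply written out the same steps explicitly, including the correct observation that $\varphi$ is a fixed Borel map of the paths independent of the reference probability system.
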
	
	
\section{Dynamic programming principle}\label{Sect:DPP}
	
Using regular conditional probabilities, in this section we prove the main results of the paper: Theorems \ref{thm:DPPdet} and \ref{thm:DPPdetff}, Proposition \ref{Prop:Msupermatingale} and Theorems \ref{thm:DPPst} and \ref{thm:DPPsttff}. In Appendix \ref{App:RegCondProb} we provide a detailed digression on regular conditional probabilities, for completeness, while here we only introduce the related notation.
	
Unless otherwise stated, we fix $(t,x,z)\in[0,T]\times\bR^d\times[0,\bar z]$ and a standard reference probability system $\nu=(\Omega,\cF,\bP,\{\cF^t_s\},W)\in\cV_t$. Then, we also fix $u\in(t,T)$ and denote by $\bP_\omega$ the regular conditional probability on $(\Omega,\cF)$ given $\cF^{t,0}_u$. That is, 
\[
\bP_\omega(A)=\bP(A|\cF^{t,0}_u)(\omega)\quad\text{for every $A\in\cF$ for $\bP$-a.e.\ $\omega$.}
\] 
The expectation with respect to $\bP_\omega$ is denoted by $\bE_\omega$ and the $\sigma$-algebra $\cF_\omega$ is the completion of $\cF^0$ with the $\bP_\omega$-null sets. To be precise, we should use $\bP^u_\omega$ instead of $\bP_\omega$ in order to keep track of the time $u$ with respect to which we evaluate the regular conditional probabilities. However, $u$ will be fixed throughout and so we can use a simpler notation.

Let $W^u_s:=W_s-W_u$ for $s\in[u,T]$ be the increments of $W$ after time $u$ and let $\cF^{u,0}_s:=\sigma(W^u_r: r\in[u,s])$ be the raw filtration generated by such increments. We denote by $\cF^{u,\omega}_s$ the augmentation of $\cF^{u,0}_s$ with the $\bP_\omega$-null sets. Now, for $\bP$-a.e.~$\omega\in\Omega$, we can define a standard reference probability system $\nu_\omega\in\cV_u$ as
\begin{align}\label{eq:nuomega}
\nu_\omega:=(\Omega,\cF_\omega,\bP_\omega,\{\cF^{u,\omega}_s\}_{s\in[u,T]}, W^u),
\end{align}
which will be frequently needed in the proofs below (it is indeed shown in Proposition \ref{Prop:StdRefProbSyst} that $\nu_\omega\in\cV_u$ and so, in particular, $W^u$ is a $\{\cF^{u,\omega}_s\}$-Brownian motion under $\bP_\omega$ for $\bP$-a.e~$\omega\in\Omega$).
	
In order to simplify notations in some proofs, for $s\in[t,T]$, on the event $\{s\le \tau\wedge\rho_\cO\}$, let us set
\begin{align}\label{DefY}
\Gamma_{s}(\rho_\cO,\tau,\alpha,\xi,X,Z):=&\int_s^{\tau\wedge\rho_\cO}\!\!f(r,X_r,Z_r,\alpha_r)\ud r\!-\!\int_{[s,\tau\wedge\rho_\cO)}\!\Big(\langle c_+(r,X_r),\ud\xi^+_r\rangle\!+\!\langle c_-(r,X_r),\ud\xi^-_r\rangle\Big)\nonumber\\
&+\!g_1(\rho_\cO,X_{\rho_\cO},Z_{\rho_\cO})\ind_{\{\rho_\cO\leq \tau \}}\!+\!g_2(\tau,X_\tau,Z_\tau)\ind_{\{\rho_\cO>\tau \}},
\end{align}
and	
\begin{align}\label{DefLambda}
\Lambda_{s}(\rho_\cO,\eta,\alpha,\xi,X,Z):=&\int_s^{\rho_\cO}\! (1\!-\!\eta_{r})f(r,X_r,Z_r,\alpha_r)\ud r \nonumber\\
&-\!\int_{[s,\rho_\cO)}\!(1\!-\!\eta_{r+})\langle c_+(r,X_r),\ud\xi^+_r\rangle\!-\!\int_{[s,\rho_\cO)}\!(1\!-\!\eta_{r+})\langle c_-(r,X_r),\ud\xi^-_r\rangle\\
&\!+\!g_1(\rho_\cO,X_{\rho_\cO},Z_{\rho_\cO})(1\!-\!\eta_{\rho_\cO})\!+\!\int_{[s,\rho_\cO)}g_2(r,X_r,Z_r)\ud\eta_r,\notag
\end{align}	
where $(X,Z)=(X^{\nu;\alpha,\xi},Z^{\nu;\xi})$ for a given couple of admissible controls $(\alpha,\xi)$ on $\nu$.
Then, from the same arguments as in \eqref{TauEtaEquivalence} we have
\begin{equation}\label{ZYequivalence}
\ind_{\{s\le \tau\wedge\rho_\cO\}}\Gamma_s(\rho_\cO,\tau,\alpha,\xi,X,Z)=\ind_{\{s\le \tau\wedge\rho_\cO\}}\Lambda_s(\rho_\cO,\eta^\tau,\alpha,\xi,X,Z),\quad\bP\text{-a.s.}
\end{equation} 	
and, setting $s=t$,
\begin{equation}\label{eq:JI}
J^\nu_{t,x,z}(\tau,\alpha,\xi)=\bE\big[\Gamma_t(\rho_\cO,\tau,\alpha,\xi,X^{\alpha,\xi},Z^{\xi})\big]=\bE\big[\Lambda_t(\rho_\cO,\eta^\tau,\alpha,\xi,X^{\alpha,\xi},Z^{\xi})\big]=I^\nu_{t,x,z}(\eta^\tau,\alpha,\xi).
\end{equation}
Clearly, removing the state variable $Z$ in the definitions of $\Gamma$ and $\Lambda$ we obtain analogous expressions for the infinite-fuel case. 	

\begin{remark}
In the proofs of this section we often use the expression ``\:$\bP$-a.e.\ $\omega$'' to indicate that a certain property $\Pi_i$ holds on a set $\Omega_i\in\cF$ with $\bP(\Omega_i)=1$. Clearly, the nature of the set $\Omega_i$ depends, in general, on the property $\Pi_i$ of interest. Since in our proofs we only consider a finite number of properties $\Pi_1,\Pi_2,\ldots,\Pi_n$ then the expression ``\:$\bP$-a.e.\ $\omega$'' refers to a universal set $\Omega':=\cap_{i=1}^n \Omega_i\in\cF$ with $\bP(\Omega')=1$ and such that all properties $\Pi_1,\Pi_2,\ldots,\Pi_n$ hold for all $\omega\in\Omega'$.
	\end{remark}

Here we prove Theorem \ref{thm:DPPdetff}. The proof of Theorem \ref{thm:DPPdet} is similar but easier as it involves one fewer state variable, so we omit it in order to avoid repetitions.

\begin{proof}[{\bf Proof of Theorem \ref{thm:DPPdetff}}] 
Recall that $(t,x,z)\in[0,T]\times\bR^d\times[0,\bar{z}]$, $u\in[t,T]$ and $\nu\in\cV_t$ are fixed. The proof is split into two steps.
		
\textit{Step 1}. (\textit{inequality} $\leq$). Given an admissible treble $(\tau,\alpha,\xi)\in\AA^\nu_{t,\bar z-z}$, set $X=X^{\nu;\alpha,\xi}$, $Z=Z^{\nu;\xi}$ and $\rho_\cO=\rho^{\nu;\alpha,\xi}_\cO$. By definition of $\Gamma$, we have
\begin{align}\label{Jdecomposed}
J_{t,x,z}^\nu(\tau,\alpha,\xi)=\bE\bigg[&\int_t^{u\wedge\tau\wedge\rho_\cO}f(s,X_s,Z_s,\alpha_s)\ud s-\int_{[t,u\wedge\tau\wedge\rho_\cO)}\langle c_+(s,X_s),\ud\xi^+_s\rangle\nonumber\\
&-\int_{[t,u\wedge\tau\wedge\rho_\cO)}\langle c_-(s,X_s),\ud\xi^-_s\rangle+g_1(\rho_\cO,X_{\rho_\cO},Z_{\rho_\cO})\ind_{\{\rho_\cO\leq\tau \}}\ind_{\{\tau\wedge\rho_\cO<u \}}\\
&+g_2(\tau,X_\tau,X_{\tau})\ind_{\{\rho_\cO>\tau \}}\ind_{\{\tau\wedge\rho_\cO<u \}}+\Gamma_u(\rho_\cO,\tau,\alpha,\xi,X,Z)\ind_{\{\tau\wedge\rho_\cO\geq u \}} \bigg].\nonumber
\end{align}
It remains to show that
\begin{equation}\label{eq:Gv}
\bE\Big[\Gamma_u(\rho_\cO,\tau,\alpha,\xi,X,Z)\ind_{\{\tau\wedge\rho_\cO\geq u \}}\Big]\leq \bE\Big[v(u,X_u,Z_u)\ind_{\{\tau\wedge\rho_\cO\geq u \}}\Big].
\end{equation}

Let us rewrite the left-hand side as 
\begin{align}\label{Part1Chain1}
\bE\Big[\Gamma_u(\rho_\cO,\tau,\alpha,\xi,X,Z)\ind_{\{\tau\wedge\rho_\cO\geq u \}}\Big]&=\bE\Big[\Lambda_u(\rho_\cO,\eta^\tau,\alpha,\xi,X,Z)\ind_{\{\tau\geq u \}}\ind_{\{\rho_\cO\geq u\}} \Big]\\
&=\bE\Big[\Lambda_u(\rho_\cO,\eta^\tau,\alpha,\xi,X,Z)(1-\eta^\tau_u)\ind_{\{\rho_\cO\geq u\}} \Big].\notag
\end{align}
By Lemma \ref{Lemma:AlphaAlpha0} and \ref{Lemma:Predict0Indisting}, we obtain a $\{\cF^{t,0}_s \}$-predictable control treble $(\eta^{\tau,0}\alpha^0,\xi^0)$ such that $\alpha=\alpha^0$, $\ud s\times\bP$-a.e.\ and $(\eta^{\tau,0},\xi^0)$ is indistinguishable from $(\eta^\tau,\xi)$. Then, by Lemma \ref{lemma:Xind}, $X=X^{\alpha,\xi}$ and $X^{\alpha^0,\xi^0}$ are indistinguishable. Moreover, $Z^\xi$ is indistinguishable from $Z^{\xi^0}=:Z^0$ and the latter is $\{\cF^{t,0}_s \}$-predictable as well. Since $X^{\alpha^0,\xi^0}$ is also left-continuous, Lemma \ref{Lemma:Predict0Indisting} guarantees that there exists a $\{\cF^{t,0}_s \}$-predictable process $X^0$ which is indistinguishable of $X^{\alpha^0,\xi^0}$. In summary, we have a pair of processes $(X^0,Z^0)$ that are $\{\cF^{t,0}_s \}$-predictable and indistinguishable from the original controlled pair $(X,Z)$. Thus, setting 
\begin{align}\label{eq:rho0}
\rho^0_\cO:=\inf\{s\geq t: (X^0_s,Z^0_s)\notin \cO \}\wedge T,
\end{align}
we also have $\bP(\rho_\cO=\rho^0_{\cO})=1$. 

For $s\in[u,T]$, let us denote $\eta^{0,u}_s=\ind_{\{s>u\}}\eta^{\tau,0}_s$ and $\xi^{0,u}_s=\xi^0_s-\xi^0_u$.
With this notation, we  obtain
\begin{align}\label{Part1Chain2}
\bE\Big[\Lambda_u(\rho_\cO,\eta^\tau,\alpha,\xi,X,Z)(1-\eta^\tau_u)\ind_{\{\rho_\cO\geq u\}} \Big]&=\bE\Big[\Lambda_u(\rho^0_\cO,\eta^{\tau,0},\alpha^0,\xi^0,X^0,Z^0)(1-\eta^{\tau,0}_u)\ind_{\{\rho^0_\cO\geq u\}} \Big]\nonumber\\
&=\bE\Big[\Lambda_u(\rho^0_\cO,\eta^{0,u},\alpha^0,\xi^{0,u},X^0,Z^0)(1-\eta^{\tau,0}_u)\ind_{\{\rho^0_\cO\geq u\}} \Big],
\end{align}
where the first equality is by equivalence of the processes under expectations and the second equality uses that $\Lambda_u$ only depends on the increments of $\xi$ after time $u$ and that $\eta^{\tau,0}_s=\eta^{0,u}_s$ for every $s\in[t,T]$ on the event $\{\tau\ge u\}$.
		
By indistinguishability of $X^{\alpha^0,\xi^0}$ and $X^0$ we have $\bP$-a.s., for all $s\in[u,T]$,
\begin{align}\label{eq:SDE0}
X^0_s&=X^{\alpha^0,\xi^0}_s\nonumber\\
&=X^{\alpha^0,\xi^0}_u+\int_u^s\mu(r,X^{\alpha^0,\xi^0}_r,\alpha^0_r)\ud r+\int_u^s\sigma(r,X^{\alpha^0,\xi^0}_r,\alpha^0_r)\ud W_r+\xi^0_s-\xi^0_u\\
&=X^0_u+\int_u^s\mu(r,X^0_r,\alpha^0_r)\ud r+\int_u^s\sigma(r,X^0_r,\alpha^0_r)\ud W^u_r+\xi^{0,u}_s.\notag
\end{align}
Since $(X^0_u,Z^0_u)$ is $\cF^{t,0}_u$-measurable, by a well-known property of regular conditional probabilities (see \eqref{ConstRegConditProb} in the Appendix), we have 
\[
\bP_\omega\Big(\big\{\omega'\in\Omega: \big(X^{0}_u(\omega'),Z^0_u(\omega')\big)=\big(X^{0}_u(\omega),Z^0_u(\omega)\big)\big\}\Big)=1,\quad\text{for $\bP$-a.e.\ $\omega$}.
\] 
It is also recalled in Lemma \ref{Lemma:AlmostSureSets} that if $\bP(\Omega_1)=1$ for some $\Omega_1\in\cF$, then $\Omega_1\in\cF_\omega$ and $\bP_\omega(\Omega_1)=1$ for a.e.\ $\omega\in \Omega$. Thus, taking $\Omega_1$ as the set where the SDE \eqref{eq:SDE0} holds, we have that $\bP_\omega$-a.s.,
for $\bP$-a.e.\ $\omega\in\Omega$
\begin{equation}\label{X^0SDE}
\begin{split}
&X^{0}_s = X^{0}_u(\omega)\!+\!\int_u^s\!\mu(r,X^0_r,\alpha^0_r)\ud r\!+\!\int_u^s\!\sigma(r,X^0_r,\alpha^0_r)\ud W^u_r\!+\!\xi^{0,u}_s,\\
&Z^0_s=Z^0_u(\omega)+V_{[u,T]}(\xi^{0,u}),
\end{split}
\end{equation}
for every $s\in[u,T]$. Likewise $\bP_\omega(\rho_\cO=\rho_\cO^0)=1$ for $\bP$-a.e.\ $\omega\in\Omega$. There are two subtle points related to \eqref{X^0SDE}: (i) a first one (raised in \cite{claisse2016pseudo}) is that the stochastic integral in \eqref{X^0SDE} is constructed with respect to the regular conditional probability $\bP_\omega$ and it is $\bP$-indistinguishable from the original one (see Lemma \ref{Lemma:StochIntRegCondProb} for details); (ii) a second one is that it is possible to check that the treble $(\eta^{0,u},\alpha^0,\xi^{0,u})$ belongs to the admissible class $\AA^{\nu_\omega}_{u,\bar z-Z^0(\omega)}$ for $\bP$-a.e.\ $\omega\in\Omega$ (see Proposition \ref{Prop:StdRefProbSyst}). Finally, $X^0$ is $\{\cF^{t,0}_s \}$-adapted and therefore it is $\{\cF^{u,\omega}_{s} \}$-adapted for $\bP$-a.e.~$\omega$ (see Lemma \ref{Lemma:FiltrationsInclusion}).

In conclusion, up to $\bP_\omega$-indistinguishability, the process $(X^0,Z^0)$ is the unique solution of \eqref{X^0SDE} in the reference probability system $\nu_\omega$, for $\bP$-a.e.\ $\omega\in\Omega$. Consistently with the notation introduced in Section \ref{sec:problem} around the SDE \eqref{StateProcess} and the process $Z$ in \eqref{eq:Z}, we should say that for $\bP$-a.e.\ $\omega$ the process $(X^0_s,Z^0_s)_{s\in[u,T]}$ is indistinguishable from the pair $X^{u,X^{0}_u(\omega);\nu_\omega;\alpha^0,\xi^{0,u}}$ and $Z^{u,Z^{0}_u(\omega);\nu_\omega;\xi^{0,u}}$ but we avoid such heavy notation as no confusion shall arise.

Notice that, on the event $\{\rho^0_\cO\ge u\}$, we have  
\[
\rho^{0}_\cO=\inf\{s\ge u: (X^0_s,Z^0_s)\notin \cO\}\wedge T,
\]
so that $\rho^0_\cO$ is equal to the first time {\em after} time $u$ when the process $(X^0_s,Z^0_s)_{s\in[u,T]}$ leaves the set $\cO$. Thus, $\rho^0_\cO$ defines a stopping time in the reference probability system $\nu_\omega$. This fact will be used in the next group of equations, without further mention.

Then, continuing from \eqref{Part1Chain2}, by tower property we obtain
\begin{align}\label{Part1Chain3}
&\bE\Big[\Lambda_u(\rho^0_\cO,\eta^{0,u},\alpha^0,\xi^{0,u},X^0,Z^0)(1-\eta^{\tau,0}_u)\ind_{\{\rho^0_\cO\geq u\}} \Big]\nonumber\\
&=\bE\Big[\bE\Big[\Lambda_u(\rho^0_\cO,\eta^{0,u},\alpha^0,\xi^{0,u},X^0,Z^0)\Big|\cF^{t,0}_u \Big](\omega)(1-\eta^{\tau,0}_u(\omega))\ind_{\{\rho^0_\cO\geq u\}}(\omega)\Big]\nonumber\\
&=\bE\Big[\bE_\omega\Big[\Lambda_u(\rho^0_\cO,\eta^{0,u},\alpha^0,\xi^{0,u},X^0,Z^0) \Big](1-\eta^{\tau,0}_u(\omega))\ind_{\{\rho^0_\cO\geq u\}}(\omega)\Big]\\
&=\bE\Big[I^{\nu_\omega}_{u,X^0_u(\omega),Z^0_u(\omega)}(\eta^{0,u},\alpha^0,\xi^{0,u})(1-\eta^{\tau,0}_u(\omega))\ind_{\{\rho^0_\cO\geq u\}}(\omega)\Big]\nonumber\\
&\leq\bE\Big[v(u,X^0_u(\omega),Z^0_u(\omega))(1-\eta^{\tau,0}_u(\omega))\ind_{\{\rho^0_\cO\geq u\}}(\omega)\Big]=\bE\Big[v(u,X_u,Z_u)\ind_{\{\tau\wedge\rho\geq u\}}\Big],\notag
\end{align}
where the second equality is by property \eqref{RegConditProbProp} of the measure $\bP_\omega$ (and Assumption \ref{ass:obj0}), the third one uses the definition of 
$I^{\nu_\omega}$ (see \eqref{TauEtaEquivalence}) and the fact that the processes $(\eta^{0,u},\alpha^0,\xi^{0,u},X^0,Z^0)$ are well-defined in the reference probability system $\nu_\omega$ for $\bP$-a.e.\ $\omega$ (see Proposition \ref{Prop:StdRefProbSyst}); the inequality follows from admissibility of the treble $(\eta^{0,u},\alpha^0,\xi^{0,u})\in\AA^{\nu_\omega}_{u,\bar z-Z^0_u(\omega)}$ for $\bP$-a.e.~$\omega$ and Proposition \ref{Prop:IndepProbSyst}; in the final expression we recall that $(X^0_u,Z^0_u,\eta^{\tau,0},\rho^0_\cO)$ and $(X_u,Z_u,\eta^{\tau},\rho_\cO)$ are equal $\bP$-a.s. 

Combining \eqref{Part1Chain1}, \eqref{Part1Chain2} and \eqref{Part1Chain3}, we obtain \eqref{eq:Gv} which, together with \eqref{Jdecomposed}, gives
\begin{align*}
&J_{t,x,z}^\nu(\tau,\alpha,\xi)\\
&\leq\bE\bigg[\int_t^{u\wedge\tau\wedge\rho_\cO}\!f(s,X_s,Z_s,\alpha_s)\ud s\!-\!\int_{[t,u\wedge\tau\wedge\rho_\cO)}\!\langle c_+(s,X_s),\ud\xi^+_s\rangle\!-\!\int_{[t,u\wedge\tau\wedge\rho_\cO)}\!\langle c_-(s,X_s),\ud\xi^-_s\rangle\nonumber\\
&\quad\qquad +g_1(\rho_\cO,X_{\rho_\cO},Z_{\rho_\cO})\ind_{\{\rho_\cO\leq\tau\}\cap\{ \rho_\cO<u \}}+g_2(\tau,X_\tau,Z_{\tau})\ind_{\{\tau< u\wedge\rho_\cO \}}+v(u,X_u,Z_u)\ind_{\{\tau\wedge\rho_\cO\geq u \}} \bigg].
\end{align*}
Taking the supremum over all admissible trebles we obtain the first inequality
\begin{align*}
v(t,x,z)\leq\sup_{(\tau,\alpha,\xi)\in\AA^\nu_{t,\bar z-z}} \bE\bigg[&\int_t^{u\wedge\tau\wedge\rho_\cO} f(s,X_s,Z_s,\alpha_s)\ud s-\int_{[t,u\wedge\tau\wedge\rho_\cO)}\langle c_+(s,X_s),\ud\xi^+_s\rangle\nonumber \\
&- \int_{[t,u\wedge\tau\wedge\rho_\cO)}\langle c_-(s,X_s),\ud\xi^-_s\rangle+g_1(\rho_\cO,X_{\rho_\cO},Z_{\rho_\cO})\ind_{\{\rho_\cO\leq\tau\}\cap\{ \rho_\cO<u \}}\nonumber\\
&+g_2(\tau,X_\tau,Z_\tau)\ind_{\{\tau< u\wedge\rho_\cO \}}+v(u,X_u,Z_u)\ind_{\{\tau\wedge\rho_\cO\geq u\}} \bigg].
\end{align*}
		
\textit{Step 2}. (\textit{inequality} $\geq$). At the start of the proof we fixed $(t,x,z)$, $u\in[t,T]$ and $\nu\in\cV_t$. Let now  $(\tau,\alpha,\xi)\in\AA^{\nu}_{t,\bar z-z}$ be arbitrary but fixed too. The idea is to construct another admissible treble $(\hat{\tau},\hat{\alpha},\hat{\xi})\in\AA^\nu_{t,\bar z-z}$ which coincides with $(\tau,\alpha,\xi)$ up to time $u$ and whose restriction to $[u,T]$ is $\delta$-optimal with respect to $v(u,X^{\alpha,\xi}_u,Z^\xi_u)$ for some $\delta>0$.

Let us denote $B^k=\{x\in\bR^d\, :\, |x|\le k,\,z\in[0,\bar z]\}$. By Assumption \ref{ass:obj} and \eqref{eq:vc2}, for any $\eps>0$ we can pick $\delta_1>0$ such that if $(x_1,z_1),(x_2,z_2)\in B^1$, with $z_1\le z_2$ and $|(x_1,z_1)-(x_2,z_2)|<\delta_1$, then
\begin{equation}\label{Iterating1}
|J^{\nu'}_{u,x_1,z_1}(\tau',\alpha',\xi')-J^{\nu'}_{u,x_2,z_2}(\tau',\alpha',[\xi']^{z_2})|+|v(u,x_1,z_1)-v(u,x_2,z_2)|<\eps,
\end{equation}
for all $(\tau',\alpha',\xi')\in\AA^{\nu'}_{u,\bar z-z_1}$ and any $\nu'\in\cV_u$.
Since $\bR^d\times[0,\bar z]$ is separable, we can choose a partition $\{B^1_n \}_{n\in\bN}$ of $B^1$ into countable disjoint Borel sets with $\mathrm{diam}(B^1_n)<\delta_1$ for every $n\in\bN$, where $\mathrm{diam}(B):=\sup \{|(x,z)-(x',z')|: (x,z),(x',z')\in B\}$. Then, \eqref{Iterating1} holds for any $(x_1,z_1),(x_2,z_2)\in B^1_n$ with $z_1\le z_2$ and $n\in\bN$. Similarly, we can choose $\delta_2>0$ such that if $(x_1,z_1),(x_2,z_2)\in B^2$, $z_1\le z_2$ and $|(x_1,z_1)-(x_2,z_2)|<\delta_2$, then \eqref{Iterating1} holds too. Thus, we can also choose a partition $\{B^2_n \}_{n\in\bN}$ of $B^2\setminus B^1$ into countable disjoint Borel sets with $\mathrm{diam}(B^2_n)<\delta_2$ for every $n\in\bN$ so that \eqref{Iterating1} holds for all $(x_1,z_1),(x_2,z_2)\in B^2_n$. Iterating this argument, we find a partition $\{B^k_n \}_{k,n\in\bN}$ of $\bR^d\times[0,\bar z]$ into countable disjoint Borel sets such that for arbitrary $n,k\in\bN$, given $(x_1,z_1),(x_2,z_2)\in B^k_n$ with $z_1\le z_2$, the condition \eqref{Iterating1} holds. With a slight abuse of notation we relabel the partition $\{ B^k_n \}_{k,n\in\bN}$ simply by $\{B_n \}_{n\in\bN}$.

Let us fix a reference probability system $\bar{\nu}=(\bar{\Omega},\bar{\cF},\bar{\bP},\{\bar{\cF}^u_{s}\},\bar{W})\in\cV_u$, which will be used as an auxiliary system when dealing with regular conditional probabilities.
Let $n,m\in\bN$ and take $(x_n,z_n)\in B_n$. For convenience, we pick $z_n$ so that $z_n\ge z$ for all $(x,z)\in B_n$. That is, with no loss of generality, the partition of $[0,\bar{z}]$ is of the form $\{\{0\},(0,a],(a,b],\ldots \}$. By Proposition \ref{Prop:IndepProbSyst}, there exist $(\tau^{n;(m)},\alpha^{n;(m)},\xi^{n;(m)})\in\AA^{\bar{\nu}}_{u,\bar z-z_n}$ such that
\begin{equation}\label{1/mOptimalControls}
J^{\bar{\nu}}_{u,x_n,z_n}(\tau^{n;(m)},\alpha^{n;(m)},\xi^{n;(m)})\geq v(u,x_n,z_n)-\tfrac{1}{m}.
\end{equation}
For the moment $m$ is fixed and we drop the superscript for notational convenience. Hence, we write $(\tau^{n},\alpha^{n},\xi^{n})$ instead of $(\tau^{n;(m)},\alpha^{n;(m)},\xi^{n;(m)})$. 

For future reference we set $\eta^n_s=\eta^{\tau^n}_s=\ind_{\{s>\tau^n\}}\in\cE^{\bar \nu}_u$ and we recall that there exists $\{\bar \cF^{u,0}_s\}$-predictable representatives $(\eta^{n,0},\alpha^{n,0},\xi^{n,0})$ such that $\alpha^n=\alpha^{n,0}$, $\ud s\times \bar\bP$-a.s.\ and $(\eta^{n,0},\xi^{n,0})$ is $\bar \bP$-indistinguishable from $(\eta^{n},\xi^{n})$. Then, by Lemma \ref{Lemma:CanonicalControl}, we have $\cP_{\Omega^*}$-measurable maps $\psi_{\alpha^n}$, $\psi^\pm_{\xi^n}$ and $\psi_{\eta^n}$ such that 
\[
\alpha^{n,0}_s(\omega)=\psi_{\alpha^n}\big(s, \bar W_\cdot(\omega)\big),\quad \xi^{n,0;\pm}_s(\omega)=\psi^\pm_{\xi^n}\big(s, \bar W_\cdot(\omega)\big),\quad\eta^{n,0}_s(\omega)=\psi_{\eta^n}\big(s, \bar W_\cdot(\omega)\big),\quad s\in[u,T].
\]
These will be used later. It is clear that by equivalence of the controls we have (recall \eqref{EtaTauEquivalence})
\begin{equation}\label{Ichain1}
I^{\bar{\nu}}_{u,x_n,z_n}(\eta^n,\alpha^n,\xi^n)=I^{\bar{\nu}}_{u,x_n,z_n}(\eta^{n,0},\alpha^{n,0},\xi^{n,0}).
\end{equation}

As in Lemma \ref{Lemma:ChangeOfControls} we construct a treble of $\{\cF^{u,0}_s\}$-predictable, admissible controls $(\tilde\eta^n,\tilde\alpha^n,\tilde\xi^n)\in\AA^{\nu_\omega}_{u,\bar z-z_n}$ such that
\begin{equation}\label{TildeXi^n=Xi^n}
\cL_{\bar{\bP}}(\eta^{n,0},\alpha^{n,0},\xi^{n,0},\bar{W})=\cL_{\bP_\omega}(\tilde{\eta}^n,\tilde{\alpha}^n,\tilde{\xi}^{n},W^u), \quad \bP\text{-a.e.~}\omega.
\end{equation}
It is worth noticing that effectively the treble $(\tilde\eta^n,\tilde\alpha^n,\tilde\xi^n)$ is independent of the specific $\omega\in\Omega$ that determines the reference probability system $\nu_\omega$. That happens because $W^u_\cdot=W_\cdot-W_u$ is a Brownian motion on every $\nu_\omega$, for $\bP$-a.e.~$\omega$, and the mappings $\psi_{\alpha^n}$, $\psi_{\eta^n}$ and $\psi_{\xi^n}$ do not depend on $\omega$. Moreover, the set $\tilde{\Omega}^n\subseteq \Omega$ where $\tilde\eta^n$ and $\tilde{\xi}^n$ satisfy all the admissibility conditions (left-continuity, bounded variation, $(\tilde\eta^n_u,\tilde\xi^n_u)=(0,0)$) belongs, by construction, to $\cF^0$ and $\bP_\omega(\tilde{\Omega}^n)=1$ for $\bP$-a.e.~$\omega$ (see the proof of Lemma \ref{Lemma:CanonicalControl}). Therefore, we also have $\bP(\tilde{\Omega}^n)=1$ (see Lemma \ref{Lemma:AlmostSureSets}) and setting $\tilde{\Omega}:=\cap_n \tilde{\Omega}^n\in\cF^0$ we conclude that the admissibility conditions of $(\tilde \eta^n,\tilde \xi^n)$ are satisfied for every $n\in\bN$ for all $\omega\in\tilde{\Omega}$ with $\bP(\tilde{\Omega})=1$.

By \eqref{TildeXi^n=Xi^n} and Lemma \ref{Coroll:EquivInLaw}, we have 
\begin{align*}
\cL_{\bar{\bP}}(&\tau^{n,0},\alpha^{n,0},\xi^{n,0},X^{u,x_n;\bar \nu;\alpha^{n,0},\xi^{n,0}},Z^{u,z_n;\bar \nu;\xi^{n,0}},\rho_\cO^{u,x_n,z_n;\bar \nu;\alpha^{n,0},\xi^{n,0}})\\
&=\cL_{\bP_{\omega}}(\tilde{\tau}^n,\tilde{\alpha}^n,\tilde{\xi}^{n},X^{u,x_n;\nu_\omega;\tilde{\alpha}^n,\tilde{\xi}^n},Z^{u,z_n;\nu_\omega;\tilde \xi^n},\rho_\cO^{u,x_n,z_n;\nu_\omega;\tilde{\alpha}^n,\tilde{\xi}^n}), \quad \text{for $\bP$-a.e.\ $\omega\in\Omega$},
\end{align*}
with $\tau^{n,0}=\inf\{s\ge u:\eta^{n,0}_s=1\}\wedge T$ and $\tilde \tau^{n}=\inf\{s\ge u:\tilde\eta^n_s=1\}\wedge T$.
Thanks to the above and \eqref{Ichain1}, for each $n\in\bN$ and $\bP$-a.e.\ $\omega$, we have 
\begin{align}\label{Omega_0equalityInLaw}
J^{\bar \nu}_{u,x_n,z_n}(\tau^n,\alpha^n,\xi^n)= I^{\bar{\nu}}_{u,x_n,z_n}(\eta^{n},\alpha^{n},\xi^{n})=I^{\nu_\omega}_{u,x_n,z_n}(\tilde{\eta}^n,\tilde{\alpha}^n,\tilde{\xi}^n)=J^{\nu_\omega}_{u,x_n,z_n}(\tilde\tau^n,\tilde\alpha^n,\tilde\xi^n),
\end{align}
which we will need later on to conclude our proof. Notice that the $\bP$-null set where the above fails may depend on $(x_n,z_n)$.

Recall the initial admissible treble $(\tau,\alpha,\xi)\in\AA^\nu_{t,\bar z-z}$ and denote by $(\eta^{\tau,0},\alpha^0,\xi^0)$ its $\{\cF^{t,0}_s\}$-predictable representative. Set $(X,Z)=(X^{\nu;\alpha^0,\xi^0},Z^{\nu;\xi^0})$ and denote by $(X^0,Z^0)$ the $\{\cF^{t,0}_s\}$-predictable processes indistinguishable of $(X,Z)$. Define the events $O_n:=\{\omega:(X^0_u,Z^0_u)\in B_n\}$. Since $\{B_n\}_{n\in\bN}$ forms a partition of $\bR^d\times[0,\bar z]$, the events $\{O_n\}_{n\in\bN}$ form a partition of $\Omega$. Then, letting $U_k:=\cup_{n=1}^k O_n$ for $k\in\bN$ we have $U_k\uparrow \Omega$ as $k\to\infty$. 

For every $k\in\bN$ and $s\in[t,T]$, define
\begin{align}
\hat{\eta}^k_s&:=\eta^{\tau,0}_s\ind_{[t,u]}(s)+\Big[\eta^{\tau,0}_u+(1-\eta^{\tau,0}_u)\Big(\sum_{n=1}^k\tilde{\eta}^n_s\ind_{O_n}+\ind_{U_k^c}\Big)\Big]\ind_{(u,T]}(s), \label{HatEta}\\
\hat{\alpha}^k_s&:=\alpha^0_s\ind_{[t,u]}(s)+\bigg[\sum_{n=1}^k\tilde{\alpha}^n_s\ind_{O_n}+\alpha^0_s\ind_{U_k^c}\bigg]\ind_{(u,T]}(s), \label{HatAlpha}
\end{align}
\begin{align}
\hat{\xi}^{k,\pm}_s&:=\xi^{\pm,0}_s\ind_{[t,u]}(s)+\bigg[\sum_{n=1}^k\Big(\tilde{\xi}^{n,\pm}_s+\xi^{\pm,0}_u\Big)\ind_{O_n}+\xi^{\pm,0}_s\ind_{U_k^c}\bigg]\ind_{(u,T]}(s),
\end{align}
where notice that by construction $\tilde{\xi}^{n,\pm}_{\,\cdot}\in\cX^{\nu_\omega}_u(\bar z-z_n)\subseteq\cX^{\nu_\omega}_u(\bar z-Z^0_u(\omega))$ for $\bP$-a.e.\ $\omega\in O_n$ because $Z^{0}_u(\omega)\le z_n$ for $\bP$-a.e.\ $\omega\in O_n$ (recall that $z_n$ was chosen so that $z_n\ge z$ for all $(x,z)\in B_n$). Finally, we set
\begin{align}
\hat{\xi}^k&:=\hat{\xi}^{k,+}-\hat{\xi}^{k,-},\label{Zeta}
\end{align}
and notice also that $\hat \xi^k\in\cX^\nu_t(\bar z-z)$. Then, defining $\hat \tau^k:=\inf\{s\ge t: \hat \eta^k_s=1\}\wedge T$ (cf.\ \eqref{eq:etatau}) we have $(\hat \tau^k,\hat \alpha^k,\hat \xi^k)\in\AA^\nu_{t,\bar z -z}$ and, in particular, $(\hat \eta^k,\hat \alpha^k, \hat \xi^k)$ are $\{\cF^{t,0}_s\}$-adapted by construction.

Clearly,
\[
O_n\subseteq\big\{\omega\in\Omega:(\hat \alpha^k_s,\hat \xi^{k,\pm}_s)=(\tilde \alpha^n_s,\xi^{\pm,0}_u+\tilde{\xi}^{n,\pm}_s),\:\:s\in(u,T]\big\}
\]
and, moreover, we have
\begin{equation}\label{eq:POn}
\bP_\omega(O_n)=\bE_\omega[\ind_{O_n}]=\bE[\ind_{O_n}|\cF^{t,0}_u](\omega)=\bE[\ind_{O_n}|\cF^t_u](\omega)=\ind_{O_n}(\omega)=1, \quad \bP\text{-a.e.~}\omega\in O_n,
\end{equation}
where the third equality is by Lemma \ref{Lemma:ExpectRegConditProb}. The equation above confirms the intuitively obvious fact that, $\bP_\omega$-a.s.\ for $\bP$-a.e.\ $\omega\in O_n$, $(\hat \alpha^k_s,\hat \xi^{k,\pm}_s)=(\tilde \alpha^n_s,\xi^\pm_u+\tilde{\xi}^{n,\pm}_s)$ for all $s\in(u,T]$.

Let us denote by $X^{k,0}$ the $\{\cF^{t,0}_s\}$-predictable process which is $\bP$-indistinguishable from the controlled dynamics $X^{\nu;\hat{\alpha}^k,\hat{\xi}^k}$. As in \eqref{X^0SDE}, we have $\bP_\omega$-a.s.~for $\bP$-a.e.\ $\omega\in\Omega$
\begin{align*}
X^{k,0}_s=X^{k,0}_u(\omega)+\int_u^s\mu(r,X^{k,0}_r,\hat \alpha^k_r)\ud r+\int_u^s\sigma(r,X^{k,0}_r,\hat \alpha^{k}_r)\ud W^u_r+\hat \xi^{k}_s-\hat \xi^{k}_u,\:\:\text{for $s\in[u,T]$.}\notag
\end{align*}
Again, we remark that $X^{k,0}$ defines a process on $\nu_\omega$ that is $\bP_\omega$-indistinguishable from the solution of the controlled SDE on $\nu_\omega$ denoted by $X^{u,X^{k,0}(\omega);\nu_\omega;\hat \alpha^{k},\hat \xi^k}$. We avoid further notation and simply identify the two processes on $\nu_\omega$. Likewise, let 
$Z^{k,0}_s=z+V_{[t,s]}(\hat \xi^k)$ for $s\in[t,T]$ so that $Z^{k,0}_s=Z^0_u(\omega)+V_{[u,s]}(\hat \xi^k)$, $\bP_\omega$-a.s.\ for $\bP$-a.e.\ $\omega$ for every $s\in[u,T]$.

Recall the exit time $\rho_\cO=\rho^{\nu;\alpha^0,\xi^0}_\cO=\inf\{s\ge t: (X_s,Z_s)\notin\cO\}\wedge T$ of the process $(X,Z)$ from $\cO$ after time $t$ (or equivalently for the process $(X^0,Z^0)$) and let $\rho^k_\cO=\rho^{\nu;\hat \alpha^k,\hat \xi^k}_\cO$ be the analogous exit time for the process $(X^{k,0},Z^{k,0})$. Since
\[
\bP\big((\eta^{\tau,0}_s,\alpha^0_s,\xi^0_s)=(\hat \eta^{k}_s,\hat \alpha^k_s,\hat \xi^k_s),\: s\in[t,u]\big)=1,
\]
then
\begin{align}\label{eq:X0Xk}
\begin{aligned}
&\bP\big((X_s,Z_s)=(X^{0}_s,Z^{0}_s)=(X^{k,0}_s,Z^{k,0}_s),\:s\in[t,u]\big)=1,\\
&\bP(\rho_\cO\wedge u=\rho^k_\cO\wedge u)=1\quad\text{and}\quad \bP(\tau\wedge u=\hat\tau^k\wedge u)=1.
\end{aligned}
\end{align}
Moreover, on the event $\{\rho_\cO\ge u\}$, we have 
\[
\rho^k_\cO=\inf\big\{s\ge u\,:\, (X^{k,0}_s,Z^{k,0}_s)\notin\cO\big\}\wedge T,
\]
i.e., the exit time is {\em after} time $u$,
and $\rho^k_\cO$ is a stopping time in the reference probability system $\nu_\omega$.

Since $(\hat{\tau}^k,\hat{\alpha}^k,\hat{\xi}^k)\in\AA^\nu_{t,\bar z-z}$ we have
\begin{align*}
v(t,x,z)
&\ge \bE\bigg[\int_t^{u\wedge\hat{\tau}^k\wedge\rho^k_\cO}f(s,X^{k,0}_s,Z^{k,0}_s,\hat{\alpha}^k_s)\ud s-\int_{[t,u\wedge\hat{\tau}^k\wedge\rho^k_\cO)}\langle c_+(s,X^{k,0}_s),\ud\hat{\xi}^{k,+}_s\rangle\nonumber\\
&\hspace{30pt}-\int_{[t,u\wedge\hat{\tau}^k\wedge\rho^k_\cO)}\langle c_-(s,X^{k,0}_s),\ud\hat{\xi}^{k,-}_s\rangle+g_1\big(\rho^k_\cO,X^{k,0}_{\rho^k_\cO},Z^{k,0}_{\rho^k_\cO}\big)\ind_{\{\rho^k_\cO\leq\hat{\tau}^k \}}\ind_{\{\hat{\tau}^k\wedge\rho^k_\cO<u \}}\\
&\hspace{30pt}+g_2\big(\hat{\tau}^k,X^{k,0}_{\hat{\tau}^k}, Z^{k,0}_{\hat \tau^k}\big)\ind_{\{\rho^k_\cO>\hat{\tau}^k \}}\ind_{\{\hat{\tau}^k\wedge\rho^k_\cO<u \}}+\Gamma_u\big(\rho^k_\cO,\hat{\tau}^k,\hat{\alpha}^k,\hat{\xi}^k,X^{k,0},Z^{k,0}\big)\ind_{\{\hat{\tau}^k\wedge\rho^k_\cO\geq u \}} \bigg].\notag
\end{align*}
Thanks to \eqref{eq:X0Xk}, the expression simplifies to
\begin{align}\label{vGeqRunning+Terminal}
v(t,x,z)&\geq\bE\bigg[\int_t^{u\wedge\tau\wedge\rho_\cO}f(s,X_s,Z_s,\alpha_s)\ud s-\int_{[t,u\wedge\tau\wedge\rho_\cO)}\langle c_+(s,X_s),\ud\xi^{+}_s\rangle\nonumber\\
&\hspace{30pt}-\int_{[t,u\wedge\tau\wedge\rho_\cO)}\langle c_-(s,X_s),\ud\xi^{-}_s\rangle+g_1\big(\rho_\cO,X_{\rho_\cO},Z_{\rho_\cO}\big)\ind_{\{\rho_\cO\leq\tau \}\cap\{\rho_\cO<u \}}\\
&\hspace{30pt}+g_2\big(\tau,X_{\tau},Z_\tau\big)\ind_{\{\tau<\rho_\cO\wedge u \}}+\Gamma_u\big(\rho^k_\cO,\hat{\tau}^k,\hat{\alpha}^k,\hat{\xi}^k,X^{k,0},Z^{k,0}\big)\ind_{\{\tau\wedge\rho_\cO\geq u \}}\bigg],\notag 
\end{align}
where we also used that the equality between stopping times in \eqref{eq:X0Xk} implies $\{\hat \tau^k\wedge\rho^k_\cO\geq u\}=\{\tau\wedge\rho_\cO\geq u \}$.

Thus, recalling \eqref{ZYequivalence} and the event $U_k$, for the last term in \eqref{vGeqRunning+Terminal} we have
\begin{align}\label{Zsplit}
&\bE\Big[\Gamma_u(\rho^k_\cO,\hat{\tau}^k,\hat{\alpha}^k,\hat{\xi}^k,X^{k,0},Z^{k,0})\ind_{\{\tau\wedge\rho_\cO\geq u \}}\Big]\\
&=\bE\Big[\Gamma_u(\rho^k_\cO,\hat{\tau}^k,\hat{\alpha}^k,\hat{\xi}^k,X^{k,0},Z^{k,0})\ind_{\{\tau\wedge\rho_\cO\geq u \}}(\ind_{U_k}+\ind_{U^c_k})\Big]\notag\\
&=\bE\Big[\Lambda_u\big(\rho^k_\cO,\hat{\eta}^k,\hat{\alpha}^k,\hat{\xi}^k,X^{k,0},Z^{k,0}\big)\ind_{\{\tau\wedge\rho_\cO\geq u \}}(\ind_{U_k}+\ind_{U^c_k})\Big].\notag
\end{align}
On the event $U_k$ we partition over the sets $\{O_n\}_{n\le k}$ and use that on each $O_n$ the control treble $(\hat \eta^k_s,\hat \alpha^k_s,\hat \xi^k_s)_{s\in(u,T]}$ becomes $(\tilde \eta^n_s,\tilde \alpha^n_s,\xi^{0}_u+\tilde \xi^{n}_s)_{s\in(u,T]}$. Then, we obtain 
\begin{align*}
\bE\Big[&\Lambda_u(\rho^k_\cO,\hat{\eta}^k,\hat{\alpha}^k,\hat{\xi}^k,X^{k,0},Z^{k,0})\ind_{\{\tau\wedge\rho_\cO\geq u \}}\ind_{U_k}\Big]\\
&=\sum_{n=1}^k\bE\Big[\Lambda_u(\rho^k_\cO,\tilde{\eta}^n,\tilde{\alpha}^n,\xi^0_u+\tilde{\xi}^n,X^{k,0},Z^{k,0})\ind_{\{\tau\wedge\rho_\cO\geq u \}\cap O_n}\Big]\nonumber\\
&=\sum_{n=1}^k\bE\Big[\Lambda_u(\rho^k_\cO,\tilde{\eta}^n,\tilde{\alpha}^n,\tilde{\xi}^n,X^{k,0},Z^{k,0})\ind_{\{\tau\wedge\rho_\cO\geq u \}\cap O_n}\Big],\nonumber
\end{align*}
where in the final equality we use that the explicit dependence of the objective function $\Lambda_u$ on the control $\xi$ is only via its increments after time $u$ (while of course also depending implicitly on $\xi$ via the processes $(X^{k,0},Z^{k,0})$). Since $(\tilde{\eta}^n,\tilde{\alpha}^n,\tilde{\xi}^n,X^{k,0},Z^{k,0})$ defines a process on $\nu_\omega$, by the tower property and Assumption \ref{ass:obj0}, we arrive at  
\begin{align}\label{eq:DPP0}
&\bE\Big[\Gamma_u(\rho^k_\cO,\hat{\tau}^k,\hat{\alpha}^k,\hat{\xi}^k,X^{k,0},Z^{k,0})\ind_{\{\tau\wedge\rho_\cO\geq u \}}\ind_{U_k}\Big]\\
&=\sum_{n=1}^k\bE\Big[\bE\Big[\Lambda_u(\rho^k_\cO,\tilde{\eta}^n,\tilde{\alpha}^n,\tilde{\xi}^n,X^{k,0},Z^{k,0})\Big|\cF^t_u\Big](\omega)\ind_{\{\tau\wedge\rho_\cO\geq u \}\cap O_n}(\omega)\Big]\nonumber\\
&=\sum_{n=1}^k\bE\Big[\bE_\omega\Big[\Lambda_u(\rho^{k}_\cO,\tilde{\eta}^n,\tilde{\alpha}^n,\tilde{\xi}^n,X^{k,0},Z^{k,0})\Big]\ind_{\{\tau\wedge\rho_\cO\geq u \}\cap O_n}(\omega)\Big],\notag
\end{align}
because $\bE_\omega[Y]=\bE[Y|\cF^{t,0}_u](\omega)=\bE[Y|\cF^t_u](\omega)$ for $\cF$-measurable $Y$ with $\bE[(Y)^-]<\infty$ (Lemma \ref{Lemma:ExpectRegConditProb}).
For $\bP$-a.e.\ $\omega\in\{\tau\wedge\rho_\cO\geq u \}\cap O_n$ the process $(X^{k,0},Z^{k,0})$ is $\bP_\omega$-indistinguishable from $(X^{\omega,n},Z^{\omega,n})$ by \eqref{eq:POn}, where $X^{\omega,n}=X^{u,X^{0}_u(\omega);\nu_\omega;\tilde \alpha^n,\tilde\xi^n}$ and $Z^{\omega,n}=Z^{u,Z^0_u(\omega);\nu_\omega;\tilde \xi^n}$. Likewise, $\rho^k_\cO$ is $\bP_\omega$-a.s.\ equal to $\rho^{\omega,n}_\cO=\inf\{s\ge u: (X^{\omega,n}_s,Z^{\omega,n}_s)\notin\cO\}\wedge T$. Then, recalling \eqref{eq:JI}, we have for $\bP$-a.e.\ $\omega$
\begin{align}\label{eq:DPP1}
\bE_\omega&\Big[\Lambda_u(\rho^{k}_\cO,\tilde{\eta}^n,\tilde{\alpha}^n,\tilde{\xi}^n,X^{k,0},Z^{k,0})\Big]\ind_{\{\tau\wedge\rho_\cO\geq u \}\cap O_n}(\omega)\\
&=I^{\nu_\omega}_{u,X^{0}_u(\omega), Z^{0}_u(\omega)}\big(\tilde{\eta}^n,\tilde{\alpha}^n,\tilde{\xi}^n\big)\ind_{\{\tau\wedge\rho_\cO\geq u \}\cap O_n}(\omega)\nonumber\\
&=J^{\nu_\omega}_{u,X^{0}_u(\omega), Z^{0}_u(\omega)}(\tilde{\tau}^n,\tilde{\alpha}^n,\tilde{\xi}^n)\ind_{\{\tau\wedge\rho_\cO\geq u \}\cap O_n}(\omega),\notag
\end{align}
where $\tilde \tau^n=\inf\{s\ge u: \tilde \eta^n_s=1\}\wedge T$. By construction of $\{B_n\}_{n\in\bN}$ (see \eqref{Iterating1}) and since $(X^0_u(\omega),Z^0_u(\omega))\in O_n$, $\bP_\omega$-a.s.\ for $\bP$-a.e.\ $\omega\in\Omega$, we have 
\begin{align}\label{eq:DPP2}
J^{\nu_\omega}_{u,X^{0}_u(\omega), Z^{0}_u(\omega)}(\tilde{\tau}^n,\tilde{\alpha}^n,\tilde{\xi}^n)\ind_{O_n}(\omega)\ge \Big(J^{\nu_\omega}_{u,x_n, z_n}(\tilde{\tau}^n,\tilde{\alpha}^n,\tilde{\xi}^n)-\eps\Big)\ind_{O_n}(\omega), 
\end{align}
upon also recalling that $Z^0_u(\omega)\le z_n$ for $\bP$-a.e.\ $\omega\in O_n$.

Plugging \eqref{eq:DPP1} and \eqref{eq:DPP2} back into \eqref{eq:DPP0} and using \eqref{Omega_0equalityInLaw}, we obtain
\begin{align}\label{eq:DPP3}
&\bE\Big[\Gamma_u(\rho^k_\cO,\hat{\tau}^k,\hat{\alpha}^k,\hat{\xi}^k,X^{k,0},Z^{k,0})\ind_{\{\tau\wedge\rho_\cO\geq u \}}\ind_{U_k}\Big]\\
&\ge \sum_{n=1}^k\bE\Big[\Big(J^{\nu_\omega}_{u,x_n,z_n}(\tilde \tau^n,\tilde \alpha^n,\tilde \xi^n)-\eps\Big)\ind_{\{\tau\wedge\rho_\cO\ge u\}\cap O_n}\Big]\notag\\
&= 
\sum_{n=1}^k\bE\Big[\Big(J^{\bar \nu}_{u,x_n,z_n}( \tau^n, \alpha^n, \xi^n)-\eps\Big)\ind_{\{\tau\wedge\rho_\cO\ge u\}\cap O_n}\Big].\notag
\end{align}
Now recall that the treble $(\tau^n,\alpha^n,\xi^n)\in\AA^{\bar \nu}_{u,\bar z-z_n}$ is $\tfrac{1}{m}$-optimal. Thus,
\begin{align}\label{eq:DPP4}
&\bE\Big[\Big(J^{\bar \nu}_{u,x_n,z_n}( \tau^n, \alpha^n, \xi^n)-\eps\Big)\ind_{\{\tau\wedge\rho_\cO\ge u\}\cap O_n}\Big]\\
&\ge \bE\Big[\Big(v(u,x_n,z_n)-\eps-\tfrac{1}{m}\Big)\ind_{\{\tau\wedge\rho_\cO\ge u\}\cap O_n}\Big]\nonumber\\
&\ge \bE\Big[\Big(v(u,X_u,Z_u)-2\eps-\tfrac{1}{m}\Big)\ind_{\{\tau\wedge\rho_\cO\ge u\}\cap O_n}\Big],\notag
\end{align}
where the second inequality uses once again \eqref{Iterating1} and that $(X_u(\omega),Z_u(\omega))\in B_n$ for $\omega\in O_n$. Combining \eqref{eq:DPP3} and \eqref{eq:DPP4} and summing over the indicator functions, we arrive at 
\begin{align}\label{eq:DPP5}
&\bE\Big[\Gamma_u(\rho^k_\cO,\hat{\tau}^k,\hat{\alpha}^k,\hat{\xi}^k,X^{k,0},Z^{k,0})\ind_{\{\tau\wedge\rho_\cO\geq u \}}\ind_{U_k}\Big]\\
&\ge\! \bE\Big[\Big(v(u,X_u,Z_u)\!-\!2\eps\!-\!\tfrac{1}{m}\Big)\ind_{\{\tau\wedge\rho_\cO\ge u\}}\ind_{U_k}\Big].\notag
\end{align}

On the event $U^c_k$ we have $\hat\eta^k_{u+}=1$, which corresponds to $\hat \tau^k=u$. Then, in \eqref{Zsplit},
\begin{align}\label{eq:DPP6}
&\bE\Big[\Gamma_u(\rho^k_\cO,\hat \tau^k, \hat \alpha^k, \hat \xi^k, X^{k,0},Z^{k,0})\ind_{\{\tau\wedge\rho_\cO\ge u\}}\ind_{U^c_k} \Big]\\
&=\bE\Big[\Big(g_1(u,X^{k,0}_u,Z^{k,0}_u)\ind_{\{\rho^k_\cO=u\}}+g_2(u,X^{k,0}_u,Z^{k,0}_u)\ind_{\{\rho^k_\cO>u\}}\Big)\ind_{\{\tau\wedge\rho_\cO\ge u\}}\ind_{U^c_k} \Big]\notag\\
&=\bE\Big[\Big(g_1(u,X^{k}_u,Z^{k}_u)\ind_{\{\rho_\cO=u\}}+g_2(u,X^{k}_u,Z^{k}_u)\ind_{\{\rho_\cO>u\}}\Big)\ind_{\{\tau\wedge\rho_\cO\ge u\}}\ind_{U^c_k} \Big],\notag
\end{align}
where the final equality is by \eqref{eq:X0Xk}. Putting together \eqref{eq:DPP5}--\eqref{eq:DPP6} and \eqref{vGeqRunning+Terminal} we arrive at 
\begin{align}
v(t,x,z)&\geq\bE\bigg[\int_t^{u\wedge\tau\wedge\rho_\cO}f(s,X_s,Z_s,\alpha_s)\ud s-\int_{[t,u\wedge\tau\wedge\rho_\cO)}\langle c_+(s,X_s),\ud\xi^{+}_s\rangle\nonumber\\
&\hspace{30pt}-\int_{[t,u\wedge\tau\wedge\rho_\cO)}\langle c_-(s,X_s),\ud\xi^{-}_s\rangle+g_1(\rho_\cO,X_{\rho_\cO},Z_{\rho_\cO})\ind_{\{\rho_\cO\leq\tau \}\cap\{\rho_\cO<u \}}\\
&\hspace{30pt}+g_2(\tau,X_{\tau},Z_\tau)\ind_{\{\tau<\rho_\cO\wedge u \}}+\Big(v(u,X_u,Z_u)-2\eps-\tfrac{1}{m}\Big)\ind_{\{\tau\wedge\rho_\cO\geq u \}}\ind_{U_k} \nonumber\\
&\hspace{30pt}+\Big(g_1(u,X_{u},Z_{u})\ind_{\{\rho_\cO= u \}}+g_2(u,X_u,Z_u)\ind_{\{\rho_\cO> u \}}\Big)\ind_{\{\tau\geq u \}\cap U^c_k}\bigg].\notag
\end{align}

Letting $k\to\infty$ we have $U_k\uparrow\Omega$ and $U_k^c\downarrow\varnothing$. Moreover, $v\ge g_2$ by definition and both $g_1$ and $g_2$ are bounded from below by Assumption \ref{ass:obj0}. Then, by Fatou's lemma, we obtain
\begin{align}\label{SecondInequality}
v(t,x,z)&\geq\bE\bigg[\int_t^{u\wedge\tau\wedge\rho_\cO}f(s,X_s,Z_s,\alpha_s)\ud s-\int_{[t,u\wedge\tau\wedge\rho_\cO)}\langle c_+(s,X_s),\ud\xi^{+}_s\rangle\nonumber\\
&\hspace{30pt}-\int_{[t,u\wedge\tau\wedge\rho_\cO)}\langle c_-(s,X_s),\ud\xi^{-}_s\rangle+g_1(\rho_\cO,X_{\rho_\cO},Z_{\rho_\cO})\ind_{\{\rho_\cO\leq\tau \}\cap\{\rho_\cO<u \}}\\
&\hspace{30pt}+g_2(\tau,X_{\tau},Z_\tau)\ind_{\{\tau<\rho_\cO\wedge u \}}+v(u,X_u,Z_u)\ind_{\{\tau\wedge\rho_\cO\geq u \}}\bigg]-\big(\tfrac{1}{m}+2\eps\big)\bP(\tau\wedge\rho_\cO\geq u)\notag
\end{align}
Letting $m\to\infty$ and $\eps\to 0$, then taking the supremum over $(\tau,\alpha,\xi)\in\cT^\nu_t\times\cA^\nu_t\times\cX^\nu_t$ in \eqref{SecondInequality} we obtain our second inequality. Recalling the result in part 1 of this proof we conclude. 
\end{proof}
\begin{remark}
The proof of Theorem \ref{thm:DPPdet} follows by identical arguments but dropping the dependence on the state process $Z$ (i.e., the state dynamics is only given by the couple $(t,X)$). The only note of caution is that instead of the finite-fuel condition, in the infinite-fuel problem one must check the admissibility condition $\bE\big[|V_{[t,T]}(\hat{\xi}^{k})|^p\big]<\infty$. That is not hard because
\[
V_{[t,T]}(\hat{\xi}^{k})=V_{[t,u]}(\xi^0)+\sum_{n=1}^k V_{[u,T]}(\tilde \xi^n)\ind_{O_n}+V_{[u,T]}(\xi^0)\ind_{U^c_k}.
\]
Clearly,
\[
\bE\big[|V_{[t,u]}(\xi^0)+V_{[u,T]}(\xi^0)\ind_{U^c_k}|^p\big]\le \bE\big[|V_{[t,T]}(\xi^0)\big|^p\big]<\infty
\] 
because $\xi^0\in\cX^\nu_{t}$. Moreover, recalling \eqref{TildeXi^n=Xi^n} we also obtain, for all $n\le k$,
\begin{align*}
\bE\big[|V_{[u,T]}(\tilde \xi^n_s)|^p\big]&=\bE\big[\bE[|V_{[u,T]}(\tilde \xi^n_s)|^p\big|\cF^{t,0}_u](\omega)\big]\nonumber\\
&=\bE\big[\bE_\omega[|V_{[u,T]}(\tilde \xi^n_s)|^p]\big]=\bE\big[\bar{\bE}[|V_{[u,T]}(\xi^{n,0}_s)|^p]\big]=\bar{\bE}\big[|V_{[u,T]}(\xi^{n}_s)|^p\big]<\infty,\nonumber
\end{align*}
where the final equality is by $\bar \bP$-indistinguishability of $\xi^n$ and $\xi^{n,0}$ and the inequality is by $\xi^n\in\cX^{\bar{\nu}}_u$.
\end{remark}

	\begin{remark}\label{Rmk:PseudoMarkov}
		In the proof of Theorem \ref{thm:DPPdet} we use the equivalence between weak formulation and strong formulation of the stochastic control problem (see Proposition \ref{Prop:IndepProbSyst}). For example, when we pass to the reference probability system $\nu_\omega$ at time $u\in[t,T]$ we need the weak formulation to argue that the inequality in \eqref{Part1Chain3} holds. Similar technical steps in other parts of the proof use the same argument.
	\end{remark}

Our next goal is to extend the dynamic programming principle to stopping times $\sigma\in(t,T)$. In the process we also prove Proposition \ref{Prop:Msupermatingale}, which turns out to be a useful tool in the proof of Theorems \ref{thm:DPPst} and \ref{thm:DPPsttff}. 

\begin{proof}[{\bf Proof of Proposition \ref{Prop:Msupermatingale}}] We perform the full proof only in the case of finite fuel. The analogue for the infinite-fuel problem follows by the same arguments. Following \cite[Ch.~II.1]{revuz2013continuous} we need to show that $\bE[(M^{\nu;\alpha,\xi;}_{s\wedge\tau})^-]<\infty$ for all $s\in[t,T]$ and $\bE[M^{\nu;\alpha,\xi}_{s\wedge\tau}|\cF^{t}_u]\le M^{\nu;\alpha,\xi}_{u\wedge\tau}$ for all $t\le u\le s\le T$. The integrability condition is immediate from Assumption \ref{ass:obj0}. Let us now prove the supermartingale inequality.
	
Fix $(t,x,z)$ and an arbitrary admissible treble $(\tau,\alpha,\xi)\in\AA^\nu_{t,\bar z-z}$ on a given reference probability system $\nu\in\cV_t$. Setting $X=X^{\nu;\alpha,\xi}$, $Z=Z^{\nu;\xi}$ and $\rho_\cO:=\rho_\cO^{\nu;\alpha,\xi}$ and arguing as in the proof of Theorem \ref{thm:DPPdetff} we obtain an $\{\cF^{t,0}_s\}$-predictable treble $(\eta^{\tau,0},\alpha^0,\xi^0)\in\AA^\nu_{t,\bar z-z}$ and a process $(X^0,Z^0)$ that are $\bP$-indistinguishable from the original ones (notice that actually $\alpha=\alpha^0$ only $\bP\times\ud s$-a.e.). Then, given an arbitrary $u\in[t,T]$, for $\bP$-a.e.\ $\omega\in\Omega$ we introduce the reference probability system $\nu_\omega\in\cV_u$ defined in \eqref{eq:nuomega}. In this system, we have 
\[
\bP_\omega\Big(\big\{\omega'\in\Omega: (X^{0}_u(\omega'),Z^0_u(\omega'))=(X^{0}_u(\omega),Z^0_u(\omega))\big\}\Big)=1.
\]
Moreover, defining $\eta^{0,u}_s=\ind_{\{s>u\}}\eta^{\tau,0}_s$ and $\xi^{0,u}_s=\xi^0_s-\xi^0_u$, the process $(X^0,Z^0)$ follows the dynamics \eqref{X^0SDE} in the reference probability system $\nu_\omega$, i.e., more precisely, it is $\bP_\omega$-indistinguishable of the pair of processes $X^{u,X^0_u(\omega);\nu_\omega;\alpha^0,\xi^{0,u}}$ and $Z^{u,Z^0_u(\omega);\nu_\omega;\xi^{0,u}}$ defined as in \eqref{StateProcess} and \eqref{eq:Z} but under $\nu_\omega$. From indistinguishibility of $(X,Z)$ and $(X^0,Z^0)$, it follows that $\bP(\rho_\cO=\rho^0_\cO)=1$, with $\rho^0_\cO$ as in \eqref{eq:rho0}, and therefore $\bP_\omega(\rho_\cO=\rho^0_\cO)=1$ for $\bP$-a.e.\ $\omega\in\Omega$ (Lemma \ref{Lemma:AlmostSureSets}). In particular, below we will use that on the event $\{\rho_\cO\ge u\}$ we have $\rho^0_\cO=\inf\{s\ge u: (X^0_s,Z^0_s)\notin\cO \}\wedge T$, i.e., $\rho^0_\cO$ is defined {\em after} time $u$ and therefore it is a non-trivial stopping time in $\nu_\omega$. In the same spirit, on the event $\{\tau\ge u\}$ we have $\tau=\inf\{s\ge u: \eta^{0,u}_s=1\}\wedge T$ so that $\tau$ is a non-trivial stopping time in $\nu_\omega$.

Let $\cN$ be such that $\nu_{\omega}\in\cV_u$ for every $\omega\in\Omega\setminus\cN$ with $\bP(\cN)=0$ and fix $\bar \omega\in\Omega\setminus\cN$. Then, by the dynamic programming principle equation (Theorem \ref{thm:DPPdetff}) applied in the reference probability system $\nu_{\bar \omega}\in\cV_u$ with $(X^0,Z^0)$ as described above, we have 
\begin{align}\label{v_uGeq}
&\ind_{\{\tau(\bar \omega)\wedge\rho_\cO(\bar \omega)\ge u\}}v(u,X^0_u(\bar \omega),Z^0_u(\bar \omega))\notag\\
&\geq \ind_{\{\tau(\bar \omega)\wedge\rho_\cO(\bar \omega)\ge u\}}\bE_{\bar \omega}\bigg[\int_u^{s\wedge\tau\wedge\rho^0_\cO} f(r,X^0_r,Z^0_r,\alpha^0_r)\ud r-\int_{[u,s\wedge\tau\wedge\rho^0_\cO)}\langle c_+(r,X^0_r),\ud\xi^{0,u;+}_r\rangle\\
&\qquad-\int_{[u,s\wedge\tau\wedge\rho^0_\cO)}\langle c_-(r,X^0_r),\ud\xi^{0,u;-}_r\rangle+g_1(\rho^0_\cO,X^0_{\rho^0_\cO},Z^0_{\rho^0_\cO})\ind_{\{\rho^0_\cO\leq\tau\}\cap\{ \rho^0_\cO<s \}}\nonumber\\
&\qquad+g_2(\tau,X^0_{\tau},Z^0_{\tau})\ind_{\{\tau< s\wedge\rho^0_\cO\}}+v(s,X^0_s,Z^0_s)\ind_{\{\tau\wedge\rho^0_\cO\geq s\}}\bigg]=:\ind_{\{\tau(\bar \omega)\wedge\rho_\cO(\bar \omega)\ge u\}}\Psi_{u,s}(\bar \omega),\notag
\end{align}
for every $s\in[u,T]$, as $(\tau,\alpha^0,\xi^{0,u})\in\AA^{\nu_{\bar{\omega}}}_{u,\bar z-Z^0_u(\bar{\omega})}$ (see Proposition \ref{Prop:StdRefProbSyst}). Since the above expression holds for arbitrary $\bar \omega\in\Omega\setminus\cN$, plugging it into our definition of $M^{\nu;\alpha,\xi}$ (see \eqref{eq:M}) we obtain $\bP$-a.s. 
\begin{align}\label{M_uGeq2}
M^{\nu;\alpha,\xi}_{u\wedge\tau}&\geq\bigg(\int_t^{u\wedge\tau\wedge\rho_\cO} f(s,X_s,Z_s,\alpha_s)\ud s-\int_{[t,u\wedge\tau\wedge\rho_\cO)}\!\Big(\!\langle c_+(s,X_s),\ud\xi^{+}_s\rangle\!+\!\langle c_-(s,X_s),\ud\xi^{-}_s\rangle\Big)\nonumber\\
&\hspace{30pt}+g_1(\rho_\cO,X_{\rho_\cO},Z_{\rho_\cO})\ind_{\{\rho_\cO\leq\tau\}\cap\{ \rho_\cO<u \}}+g_2(\tau,X_\tau,Z_\tau)\ind_{\{\tau< u\wedge\rho_\cO \}}\bigg)+\ind_{\{\tau\wedge\rho_\cO\ge u\}}\Psi_{u,s}.
\end{align}
Recalling that $\bE_\omega[Y]=\bE[Y|\cF^{t,0}_u](\omega)=\bE[Y|\cF^t_u](\omega)$ for any $\cF$-measurable $Y$ such that $\bE[(Y)^-]<\infty$ (Lemma \ref{Lemma:ExpectRegConditProb}), we obtain by Assumption \ref{ass:obj0}, for $\bP$-a.e.~$\omega$,
\begin{align}\label{eq:Psi}
\Psi_{u,s}(\omega)&=\bE\bigg[\int_u^{s\wedge\tau\wedge\rho_\cO} f(r,X_r,Z_r,\alpha_r)\ud r-\int_{[u,s\wedge\tau\wedge\rho_\cO)}\langle c_+(r,X_r),\ud\xi^{+}_r\rangle\nonumber\\
&\hspace{30pt}-\int_{[u,s\wedge\tau\wedge\rho_\cO)}\langle c_-(r,X_r),\ud\xi^{-}_r\rangle+g_1(\rho_\cO,X_{\rho_\cO},Z_{\rho_\cO})\ind_{\{\rho_\cO\leq\tau\}\cap\{ \rho_\cO<s \}}\\
&\hspace{30pt}+g_2(\tau,X_{\tau},Z_{\tau})\ind_{\{\tau< s\wedge\rho_\cO\}}+v(s,X_s,Z_s)\ind_{\{\tau\wedge\rho_\cO\geq s\}}\bigg|\cF^t_u\bigg](\omega),\notag
\end{align}
where we also used the $\bP$-equivalence of $(\tau,\alpha,\xi,\rho_\cO, X,Z)$ and their $\{\cF^{t,0}_s\}$-predictable counterpart $(\tau^0,\alpha^0,\xi^0,\rho^0_\cO,X^0,Z^0)$ and that $\ud \xi^{0,u;\pm}_r=\ud \xi^{0;\pm}_r$ for $r\in[u,T]$, $\bP$-a.s.\ by construction.
Combining \eqref{M_uGeq2} and \eqref{eq:Psi}, we arrive at $M^{\nu;\alpha,\xi}_{u\wedge\tau}\ge \bE[M^{\nu;\alpha,\xi}_{s\wedge\tau}|\cF^t_u]$, $\bP$-a.s., as needed.

Now assume that the treble $(\tau^*,\alpha^*,\xi^*)\in\AA_{t,\bar{z}-z}^\nu$ is optimal, i.e.,
\begin{align}\label{eq:optimal}
v(t,x,z)=J^\nu_{t,x,z}(\tau^*,\alpha^*,\xi^*).
\end{align}
Then, repeating step 1 in the proof of Theorem \ref{thm:DPPdetff} with the treble $(\tau,\alpha,\xi)=(\tau^*,\alpha^*,\xi^*)$, we obtain
\begin{align*}
&J^\nu_{t,x,z}(\tau^*,\alpha^*,\xi^*)\\
&\le \bE\bigg[\!\int_t^{u\wedge\tau^*\wedge\rho^*_\cO}\!\!\! f(s,X^*_s,Z^*_s,\alpha^*_s)\ud s\!-\!\int_{[t,u\wedge\tau^*\wedge\rho^*_\cO)}\!\Big(\langle c_+(s,X^*_s),\ud\xi^{*,+}_s\rangle\!+\!\langle c_-(s,X^*_s),\ud\xi^{*,-}_s\rangle\Big)\nonumber\\
&\quad\!+\!g_1(\rho^*_\cO,X^*_{\rho^*_\cO},Z^*_{\rho^*_\cO})\ind_{\{\rho^*_\cO\leq\tau^*\}\cap\{ \rho^*_\cO<u \}}\!+\!g_2(\tau^*,X^*_{\tau^*},Z^*_{\tau^*})\ind_{\{\tau^*< u\wedge\rho^*_\cO \}}\!+\!v(u,X^*_u,Z^*_u)\ind_{\{\tau^*\wedge\rho^*_\cO\geq u\}} \bigg],\nonumber
\end{align*}
where we have denoted $(X^*,Z^*):=(X^{\nu;\alpha^*,\xi^*}, Z^{\nu;\alpha^*,\xi^*})$ and $\rho^*_\cO:=\rho^{\nu;\alpha^*,\xi^*}_\cO$. 
Using the inequality above and \eqref{eq:optimal} it is not difficult to prove that
\[
v(u,X^*_u(\omega),Z^*_u(\omega))=J^{\nu_\omega}_{u,X^*_u(\omega),Z^*_u(\omega)}(\tau^*,\alpha^*,\xi^*), \quad \text{for $\bP$-a.e.\ $\omega\in\{\tau^*\wedge\rho_\cO\geq u \}$}.
\]
Then, the inequality in \eqref{v_uGeq} becomes an equality upon replacing $(\tau,\alpha^0,\xi^0)$ therein with $(\tau^*,\alpha^*,\xi^*)$. Thus, $M^{\nu;\alpha^*,\xi^*}_{s\wedge\tau^*}$ is a martingale. Notice that $\bE[|M^{\nu;\alpha^*,\xi^*}_{s\wedge\tau^*}|]<\infty$ holds thanks to   \eqref{MartingaleIntegrability}.
\end{proof}

We can now prove the dynamic programming principle for stopping times (Theorems \ref{thm:DPPst} and \ref{thm:DPPsttff}). Once again we only provide the full proof for the finite-fuel problem as the one for the infinite-fuel case follows the same arguments but with one fewer state variable.
\begin{proof}[{\bf Proof of Theorem \ref{thm:DPPsttff}}]
The proof is split into two steps and it uses a standard approximation argument for stopping times, combined with Theorem \ref{thm:DPPdetff}.
Fix $(t,x,z)\in[0,T]\times\bR^d\times[0,\bar z]$. For every $n\in\bN$ let $\Pi_n:=\{t^i_n \}_{i=0}^{2^n}$ be the $n$-th dyadic partition of $[t,T]$, i.e., $t^i_n:=t+i/2^n(T-t)$. \medskip
		
\textit{Step 1.} (\textit{inequality} $\leq$). 	Let $(\tau,\alpha,\xi)\in\AA^\nu_{t,\bar z-z}$ and let $(X,Z)=(X^{\nu;\alpha,\xi},Z^{\nu;\alpha,\xi})$. Let $\sigma\in\cT^\nu_t$ be arbitrary but fixed. Since $\sigma$ is a stopping time with respect to the augmented Brownian filtration $\{\cF^t_s\}$, then it is $\{\cF^t_s\}$-predictable (see, e.g., \cite[Ch.\ V, Cor.\ 3.3]{revuz2013continuous}). Thus, there exists a sequence $(\sigma_n)_{n\in\bN}$ of $\{\cF^{t}_s\}$-predictable stopping times such that $\sigma_n\uparrow \sigma$ as $n\to\infty$ on the event $\{\sigma>t\}$ and $\sigma_n$ takes values in $\Pi_n$ (see, e.g., \cite[Ch.\ IV, Thm.\ 77]{dellacherie1978probabilities}). We set $A^i_n:=\{\sigma_n=t^i_n \}\in\cF^t_{t^i_n}$.

Recalling $\Gamma_s$ from \eqref{DefY} we have, for every $n\in\bN$,
\begin{align}\label{Jdecomposed2}
J_{t,x,z}^\nu(\tau,\alpha,\xi)&=\bE\bigg[\int_t^{\sigma_n\wedge\tau\wedge\rho_\cO}f(s,X_s,Z_s,\alpha_s)\ud s-\int_{[t,\sigma_n\wedge\tau\wedge\rho_\cO)}\langle c_+(s,X_s),\ud\xi^+_s\rangle\nonumber\\
&\hspace{30pt}-\int_{[t,\sigma_n\wedge\tau\wedge\rho_\cO)}\langle c_-(s,X_s),\ud\xi^-_s\rangle+g_1(\rho_\cO,X_{\rho_\cO},Z_{\rho_\cO})\ind_{\{\rho_\cO\leq\tau \}}\ind_{\{\tau\wedge\rho_\cO<\sigma_n \}}\nonumber\\
&\hspace{30pt}+g_2(\tau,X_\tau,Z_\tau)\ind_{\{\rho_\cO>\tau \}}\ind_{\{\tau\wedge\rho_\cO<\sigma_n \}}+\Gamma_{\sigma_n}(\rho_\cO,\tau,\alpha,\xi,X,Z)\ind_{\{\tau\wedge\rho_\cO\geq \sigma_n \}} \bigg]\nonumber\\
&=\sum_{i=0}^{2^n}\bE\bigg[\bigg(\int_t^{t^i_n\wedge\tau\wedge\rho_\cO}f(s,X_s,Z_s,\alpha_s)\ud s-\int_{[t,t^i_n\wedge\tau\wedge\rho_\cO)}\langle c_+(s,X_s),\ud\xi^+_s\rangle\\
&\hspace{56pt}-\int_{[t,t^i_n\wedge\tau\wedge\rho_\cO)}\langle c_-(s,X_s),\ud\xi^-_s\rangle+g_1(\rho_\cO,X_{\rho_\cO},Z_{\rho_\cO})\ind_{\{\rho_\cO\leq\tau \}\cap\{\rho_\cO<t^i_n \}}\nonumber\\
&\hspace{56pt}+g_2(\tau,X_\tau,Z_\tau)\ind_{\{\tau<t^i_n\wedge\rho_\cO \}}+\Gamma_{t^i_n}(\rho_\cO,\tau,\alpha,\xi,X,Z)\ind_{\{\tau\wedge\rho_\cO\geq t^i_n \}}\bigg)\ind_{A^i_n} \bigg].\nonumber
\end{align}
Now, for every $n\in\bN$ and every $i=0,1,\ldots,2^n$, we apply the same argument as in the dynamic programming principle for deterministic times \eqref{Part1Chain1}-\eqref{Part1Chain3} with $u=t^i_n$ and using the regular conditional probability $\bP^{t^i_n}_\omega(A)=\bP(A|\cF^{t,0}_{t^i_n})=\bP(A|\cF^{t}_{t^i_n})$ (also notice that $A^i_n\in\cF^t_{t^i_n}$). Thus, we obtain
\begin{equation}
\bE\Big[\Gamma_{t^i_n}(\rho_\cO,\tau,\alpha,\xi,X,Z)\ind_{\{\tau\wedge\rho_\cO\geq t^i_n\}}\ind_{A^i_n}\Big]\leq \bE\Big[v(t^i_n,X_{t^i_n},Z_{t^i_n})\ind_{\{\tau\wedge\rho\geq t^i_n\}}\ind_{A^i_n}\Big].
\end{equation}
Plugging this inequality into \eqref{Jdecomposed2} and summing over the events $A^i_n$, $i=0,\ldots 2^n$, we have 
\begin{align}
J_{t,x,z}^\nu(\tau,\alpha,\xi)&\leq\bE\bigg[\int_t^{\sigma_n\wedge\tau\wedge\rho_\cO}f(s,X_s,Z_s,\alpha_s)\ud s-\int_{[t,\sigma_n\wedge\tau\wedge\rho_\cO)}\langle c_+(s,X_s),\ud\xi^+_s\rangle\nonumber\\
&\hspace{30pt}-\int_{[t,\sigma_n\wedge\tau\wedge\rho_\cO)}\langle c_-(s,X_s),\ud\xi^-_s\rangle+g_1(\rho_\cO,X_{\rho_\cO},Z_{\rho_\cO})\ind_{\{\rho_\cO\leq\tau \}\cap\{\rho_\cO<\sigma_n \}}\\
&\hspace{30pt}+g_2(\tau,X_\tau,Z_\tau)\ind_{\{\tau<\sigma_n\wedge\rho_\cO \}}+v(\sigma_n,X_{\sigma_n},Z_{\sigma_n})\ind_{\{\tau\wedge\rho_\cO\geq \sigma_n\}} \bigg].\nonumber
\end{align}
Letting $n\to\infty$, by Assumption \ref{ass:DCT}, we obtain
\begin{align*}
J_{t,x,z}^\nu(\tau,\alpha,\xi)&\leq\bE\bigg[\int_t^{\sigma\wedge\tau\wedge\rho_\cO}f(s,X_s,Z_s,\alpha_s)\ud s-\int_{[t,\sigma\wedge\tau\wedge\rho_\cO)}\langle c_+(s,X_s),\ud\xi^+_s\rangle\nonumber\\
&\qquad-\int_{[t,\sigma\wedge\tau\wedge\rho_\cO)}\langle c_-(s,X_s),\ud\xi^-_s\rangle+g_1(\rho_\cO,X_{\rho_\cO},Z_{\rho_\cO})\ind_{\{\rho_\cO\leq\tau \}\cap\{\rho_\cO<\sigma \}}\nonumber\\
&\qquad+g_2(\tau,X_\tau,Z_\tau)\ind_{\{\tau<\sigma\wedge\rho_\cO \}}+v(\sigma,X_{\sigma},Z_{\sigma})\ind_{\{\tau\wedge\rho_\cO\geq \sigma\}} \bigg].
\end{align*}
Taking the supremum over $(\tau,\alpha,\xi)\in\AA^\nu_{t,\bar z-z}$ we conclude this step in the proof.
\medskip

\textit{Step 2.} (\textit{inequality} $\geq$). Again, let $(\tau,\alpha,\xi)\in\AA^\nu_{t,\bar z-z}$ and let $(X,Z)=(X^{\nu\alpha,\xi},Z^{\nu;\alpha,\xi})$. The process $M^{\nu;\alpha,\xi}_{u\wedge\tau}$ defined in \eqref{eq:M}
is a supermartingale on $\nu$ by Proposition \ref{Prop:Msupermatingale}, with $M^{\nu;\alpha,\xi}_{t\wedge\tau}=M^{\nu;\alpha,\xi}_{t}=v(t,x,z)$. 
For every $n\in\bN$, let $\sigma_n\in\cT^\nu_t$ be as in step 1 above. Since $\sigma_n$ takes values in a discrete set we can apply optional sampling theorem in discrete time to the left-continuous supermartingale $M^{\nu;\alpha,\xi}_{u\wedge\tau}$ and obtain
$v(t,x,z)=M^{\nu;\alpha,\xi}_t\ge\bE\big[M^{\nu;\alpha,\xi}_{\sigma_n\wedge\tau}\big]$, for all $n\in\bN$, since $\cF^t_t=\cF^t_{t^0_n}$ is trivial.
That is, for every $n\in\bN$,
\begin{align*}
v(t,x,z)&\geq\bE\bigg[\int_t^{\sigma_n\wedge\tau\wedge\rho_\cO}\!\! f(s,X_s,Z_s,\alpha_s)\ud s-\int_{[t,\sigma_n\wedge\tau\wedge\rho_\cO)}\langle c_+(s,X_s),\ud\xi^+_s\rangle\nonumber\\
&\qquad\:\:- \int_{[t,\sigma_n\wedge\tau\wedge\rho_\cO)}\langle c_-(s,X_s),\ud\xi^-_s\rangle
+g_1(\rho_\cO,X_{\rho_\cO},Z_{\rho_\cO})\ind_{\{\rho_\cO\leq\tau\}\cap\{ \rho_\cO<\sigma_n \}}\\
&\qquad\:\:+g_2(\tau,X_\tau,Z_\tau)\ind_{\{\tau< \sigma_n\wedge\rho_\cO \}}
+v(\sigma_n,X_{\sigma_n},Z_{\sigma_n})\ind_{\{\tau\wedge\rho_\cO\geq \sigma_n\}}\bigg].
\end{align*}
Letting $n\to\infty$, by Assumption \ref{ass:DCT}, we obtain
\begin{align*}
v(t,x,z)&\geq\bE\bigg[\int_t^{\sigma\wedge\tau\wedge\rho_\cO}f(s,X_s,Z_s,\alpha_s)\ud s-\int_{[t,\sigma\wedge\tau\wedge\rho_\cO)}\langle c_+(s,X_s),\ud\xi^+_s\rangle\nonumber\\
&\hspace{30pt}-\int_{[t,\sigma\wedge\tau\wedge\rho_\cO)}\langle c_-(s,X_s),\ud\xi^-_s\rangle+g_1(\rho_\cO,X_{\rho_\cO},Z_{\rho_\cO})\ind_{\{\rho_\cO\leq\tau \}\cap\{\rho_\cO<\sigma \}}\nonumber\\
&\hspace{30pt}+g_2(\tau,X_\tau,Z_\tau)\ind_{\{\tau<\sigma\wedge\rho_\cO \}}+v(\sigma,X_{\sigma},Z_\sigma)\ind_{\{\tau\wedge\rho_\cO\geq \sigma\}} \bigg].
\end{align*}
Combined with step 1 above, we conclude.
\end{proof}
	
\begin{remark}
Notice that for the controls $\xi\in\cX^\nu_t$, the condition $\bE[|V_{[t,T]}(\xi)|^p]<\infty$ is not needed to prove the dynamic programming principle (if anything it makes some arguments lengthier). Therefore, our study could have been developed without assuming any integrability of the admissible singular controls. It is also worth noticing that existence and uniqueness of the solution to the SDE \eqref{StateProcess} is guaranteed by, e.g., \cite[Theorem 1]{doleans1976existence} without assuming integrability on the singular controls. 

It is however useful to have developed our arguments with such additional constraint, in order to allow for wider applicability of our results. Indeed, a-priori it is not obvious that adding constraints to the set of admissible controls is without consequence for the DPP {\em(\cite[Remark 2.15]{fabbri2017stochastic})}. Moreover, the condition $\bE[|V_{[t,T]}(\xi)|^p]<\infty$ is useful to prove continuity of the value function {\em(see, e.g., \cite[Ch.~III, Sec.~9]{milazzo2021})}.
\end{remark}

	\appendix
	\section{Regular conditional probabilities}\label{App:RegCondProb}
	
In this appendix, we gather well-known results on regular conditional probabilities most of which can be sourced, for instance, from \cite[Ch.~I, Sec.~III]{ikeda1989stochastic} and \cite[Ch.~II, Sec.~III]{fabbri2017stochastic}. For completeness, we also provide most proofs for the convenience of an unfamiliar reader. Regular conditional probabilities allow us to construct reference probability systems starting at future times $u\in[t,T]$ that preserve the history of the problem up to time $u$ (see Proposition \ref{Prop:StdRefProbSyst}). These systems provide a pseudo-Markovian structure (as remarked in \cite{claisse2016pseudo}) which is essential in order to obtain the dynamic programming principle for our (in principle non-Markovian) problem.  

\subsection{Background material}
	
Let $(\Omega,\cH,\bQ)$ be a probability space and $\cG\subseteq\cH$ be a sub $\sigma$-algebra. A function $p:\Omega\times\cH\to[0,1]$ is called regular conditional probability (given $\cG$) if
\begin{enumerate}[(i)]
\item for every $\omega\in\Omega$, the function $\bQ_\omega:=p(\omega,\cdot)$ is a probability measure on $(\Omega,\cH)$;
\item for every $A\in\cH$, the function $p(\cdot,A)$ is $\cG$-measurable;
\item for every $A\in\cH$, we have $\bQ_\omega(A)=\bE^\bQ[\ind_A|\cG](\omega)=:\bQ(A|\cG)$ for $\bQ$-a.e.~$\omega$.
\end{enumerate}

By recalling that $(X)^\pm$ denote the positive/negative part of X, if $X$ is $\cH$-measurable and either $\bE^\bQ[(X)^-]<\infty$ or $\bE^\bQ[(X)^+]<\infty$, it follows that
\begin{equation}\label{RegConditProbProp}
\bE^\bQ[X|\cG](\omega)=\bE^{\bQ_\omega}[X]:=\int_\Omega X(\omega')\ud\bQ_\omega(\omega'), \quad \bQ\text{-a.e.} \: \omega.
\end{equation}
Indeed, $\bE^\bQ[X]$ is well-defined if $\min\{\bE^\bQ[(X)^-],\bE^\bQ[(X)^+]\}<\infty$ (see, e.g., \cite[p.~25, eq.~(2)]{rudin}) and conditional expectations can be constructed in the standard way with the aid of monotone convergence theorem.

Existence of regular conditional probabilities is non-trivial but the next result is well-known. Let $(\Omega,\cF,\bP)$ be a complete probability space, with $(\Omega,\cF^0)$ a standard measurable space in the sense of item {\bf (i)} in Section \ref{sec:notation} and $\cF$ the $\bP$-completion of $\cF^0$. Then, given a sub-$\sigma$-algebra $\cG\subseteq\cF^0$, there exists a regular conditional probability $p:\Omega\times\cF^0\to[0,1]$ which is $\bP$-a.s.\ unique. That is, if $p':\Omega\times\cF^0\to[0,1]$ is another regular conditional probability, given $\cG$, then there exists $N\in\cG$ with $\bP(N)=0$ such that if $\omega\in\Omega\setminus N$ then $p(\omega,A)=p'(\omega,A)$ for every $A\in\cF^0$. Moreover, if $\Gamma$ is a Polish space and $\gamma:\Omega\to\Gamma$ is $\cG/\cB(\Gamma)$-measurable then
	\begin{equation}\label{ConstRegConditProb}
	\bP_\omega(\{\omega' \in\Omega: \gamma(\omega')=\gamma(\omega)\})=1, \quad \bP\text{-a.e.~}\omega,
	\end{equation}
	i.e., $\gamma$ is $\bP_\omega$-a.s.~ equal to the constant $\gamma(\omega)$ for $\bP$-a.e.~ $\omega$. For a proof of these results see, e.g., Theorem 3.1, Theorem 3.2 and the subsequent Corollary of Chapter I in \cite{ikeda1989stochastic}.
	
\subsection{A collection of technical results}
Let $\nu=(\Omega,\cF,\bP,\{\cF^t_s\},W)\in\cV_t$ be a fixed standard reference probability system. Recall that $\cF$ is the completion of $\cF^0$ with the $\bP$-null sets and that $(\Omega,\cF^0)$ is a standard measurable space.

Let us fix $u\in[t,T]$. We denote by $\bP_\omega$ the regular conditional probability given $\cF^{t,0}_u$. That is, $\bP_\omega(A)=\bP(A|\cF^{t,0}_u)(\omega)$ for every $A\in\cF$ for $\bP$-a.e.\ $\omega$. We also denote by $\cF_\omega$ the completion of $\cF^0$ with the $\bP_\omega$-null sets. Thus,
\begin{equation}
\cF_\omega=\{A\cup N_1: A\in\cF^0, N_1\subseteq N\in\cF^0, \bP_\omega(N)=0 \}.
\end{equation}
For completeness we should use the notation $\bP^u_\omega(\,\cdot\,)=\bP(\,\cdot\,|\cF^{t,0}_u)(\omega)$, but since $u$ is fixed we prefer a simpler notation and drop the superscript $u$.

We define $W^u_s:=W_s-W_u$ for $s\in[u,T]$, $\cF^{u,0}_s:=\sigma(W^u_r: r\in[u,s])$ and we denote by $\cF^{u,\omega}_s$ the augmentation of $\cF^{u,0}_s$ with the $\bP_\omega$-null sets. For the sake of simplicity, we also denote by $\bE$ the expectation with respect to $\bP$ and by $\bE_\omega$ the expectation with respect to $\bP_\omega$.
	
	In this framework, we have the following results.
	
	\begin{lemma}\label{Lemma:AlmostSureSets}
		Let $A\in\cF$, then $A\in\cF_\omega$ for $\bP$-a.e.\ $\omega$. Moreover, if $\bP(A)=1$, then $\bP_\omega(A)=1$ for $\bP$-a.e.~ $\omega$. Conversely, let $A\in\cF^0$ be such that $\bP_\omega(A)=1$ for $\bP$-a.e.\ $\omega$, then $\bP(A)=1$.
	\end{lemma}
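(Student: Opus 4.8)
\textbf{Proof plan for Lemma \ref{Lemma:AlmostSureSets}.} The statement has three parts, and each follows from the defining properties (i)--(iii) of regular conditional probabilities together with the completeness of the various $\sigma$-algebras. For the first claim, the plan is to take $A\in\cF$ and write $A=A_0\cup N_1$ with $A_0\in\cF^0$ and $N_1\subseteq N\in\cF^0$, $\bP(N)=0$, using completeness of $\cF$. By property (iii) applied to $N\in\cF^0$, we have $\bP_\omega(N)=\bP(N\mid\cF^{t,0}_u)(\omega)=0$ for $\bP$-a.e.\ $\omega$; on that full-measure set $N$ is a $\bP_\omega$-null set, hence $N_1$ is a subset of a $\bP_\omega$-null set and $A=A_0\cup N_1\in\cF_\omega$ by definition of the completion $\cF_\omega$.

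For the second claim, suppose $\bP(A)=1$. Write again $A=A_0\cup N_1$ as above with $A_0\in\cF^0$; then $\bP(A_0)=1$, so $\bP(\Omega\setminus A_0)=0$. Applying property (iii) to the set $\Omega\setminus A_0\in\cF^0$ gives $\bP_\omega(\Omega\setminus A_0)=0$, i.e.\ $\bP_\omega(A_0)=1$, for $\bP$-a.e.\ $\omega$. Since $A\supseteq A_0$ and (by the first part) $A\in\cF_\omega$, monotonicity of the measure $\bP_\omega$ yields $\bP_\omega(A)=1$ for $\bP$-a.e.\ $\omega$. For the converse, let $A\in\cF^0$ with $\bP_\omega(A)=1$ for $\bP$-a.e.\ $\omega$; then $\bP(A\mid\cF^{t,0}_u)(\omega)=\bP_\omega(A)=1$ for $\bP$-a.e.\ $\omega$ by property (iii), and integrating over $\Omega$ gives $\bP(A)=\bE[\bP(A\mid\cF^{t,0}_u)]=1$.

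The arguments are routine manipulations with conditional expectations; there is no genuine obstacle. The only point requiring minor care is the bookkeeping of null sets: property (iii) holds ``for $\bP$-a.e.\ $\omega$'', with the exceptional set depending on the particular event to which it is applied, so one should note that in each part only finitely many events ($N$, or $\Omega\setminus A_0$, or $A$) are involved and hence the exceptional sets can be combined into a single $\bP$-null set outside of which all the stated conclusions hold simultaneously. This is exactly the convention recorded in the Remark preceding the proof of Theorem \ref{thm:DPPdetff}.
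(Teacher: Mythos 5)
Your proof is correct and follows essentially the same route as the paper's: decompose $A\in\cF$ as $A_0\cup N_1$ with $A_0\in\cF^0$ and $N_1$ contained in a $\bP$-null $N\in\cF^0$, deduce $\bP_\omega(N)=0$ for $\bP$-a.e.\ $\omega$ from $0=\bP(N)=\bE[\bP_\omega(N)]$ (which is exactly what property (iii) plus nonnegativity gives), and conclude via completeness of $\cF_\omega$; the remaining two claims then follow by the same mechanism, as you carry out explicitly where the paper merely says ``by a similar argument.'' Your closing remark about collecting the finitely many exceptional $\bP$-null sets is a good habit and is precisely the convention the paper records before the proof of the DPP.
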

	\begin{proof}
		Since $A\in\cF$, then $A=A_0\cup N_0$ where $A_0\in\cF^0$ and $N_0\subseteq N\in\cF^0$ with $\bP(N)=0$. Therefore,
		\begin{equation}\label{AlmostSureSets}
		0=\bP(N)=\bE[\bE[\ind_{N}|\cF^{t,0}_u](\omega)]=\bE[\bP_\omega(N)].
		\end{equation}
		Thus, $\bP_\omega(N)=0$ for $\bP$-a.e.\ $\omega$ and so $N_0\in\cF_\omega$ for $\bP$-a.e.\ $\omega$. Hence, $A\in\cF_\omega$ for $\bP$-a.e.\ $\omega$. By a similar argument as in \eqref{AlmostSureSets}, we obtain the rest of the proof.
	\end{proof}
	
\begin{lemma}\label{Lemma:ExpectRegConditProb}
Let $Y:\Omega\to\bR^d$ be a $\cF$-measurable random variable such that either $\bE[(Y)^-]<\infty$ or $\bE[(Y)^+]<\infty$. Then, for $\bP$-a.e.~$\omega$, we have that $Y$ is $\cF_\omega$-measurable with $\min\{\bE_\omega[(Y)^-],\bE_\omega[(Y)^+]\}<\infty$ and $\bE_\omega[Y]=\bE[Y|\cF^t_u](\omega)$.
\end{lemma}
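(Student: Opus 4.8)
The plan is to prove the three assertions (measurability of $Y$ with respect to $\cF_\omega$, finiteness of the negative/positive part of its $\bP_\omega$-integral, and the identity $\bE_\omega[Y]=\bE[Y|\cF^t_u](\omega)$) in that order, reducing each to the background facts on regular conditional probabilities recalled just above the statement. First I would handle measurability. Since $Y$ is $\cF$-measurable and $\cF$ is the $\bP$-completion of $\cF^0$, there is a $\cF^0$-measurable $Y_0$ with $Y=Y_0$ outside a $\bP$-null set $N\in\cF^0$. By Lemma \ref{Lemma:AlmostSureSets} we have $\bP_\omega(N)=0$ for $\bP$-a.e.\ $\omega$, hence $Y=Y_0$ holds $\bP_\omega$-a.s.\ for $\bP$-a.e.\ $\omega$, so $Y$ is $\cF_\omega$-measurable (being $\bP_\omega$-a.s.\ equal to the $\cF^0$-measurable $Y_0$, and $\cF_\omega$ contains all $\bP_\omega$-null sets) for $\bP$-a.e.\ $\omega$.

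Next I would address integrability and the identity simultaneously, since they are intertwined. The clean route is to first treat $Y\ge 0$. For a nonnegative $\cF^0$-measurable (hence, after the reduction above, $\cF_\omega$-measurable $\bP_\omega$-a.s.) random variable, the function $\omega\mapsto\bE_\omega[Y]$ equals $\bE[Y\mid\cF^{t,0}_u](\omega)$ for $\bP$-a.e.\ $\omega$: this follows from property (iii) in the definition of regular conditional probability for indicators, then for simple functions by linearity, and then for general nonnegative $Y$ by monotone convergence (applied both to the $\bP_\omega$-integral for fixed $\omega$ and to the conditional expectation), exactly as indicated in the discussion around \eqref{RegConditProbProp}. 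Then I would apply this to $(Y)^+$ and $(Y)^-$ separately. Under the hypothesis, say, $\bE[(Y)^-]<\infty$, we get $\bE[\bE_\omega[(Y)^-]]=\bE[(Y)^-]<\infty$, so $\bE_\omega[(Y)^-]<\infty$ for $\bP$-a.e.\ $\omega$; in particular $\min\{\bE_\omega[(Y)^-],\bE_\omega[(Y)^+]\}<\infty$ for $\bP$-a.e.\ $\omega$, so $\bE_\omega[Y]$ is well-defined, and $\bE_\omega[Y]=\bE_\omega[(Y)^+]-\bE_\omega[(Y)^-]=\bE[(Y)^+\mid\cF^{t,0}_u](\omega)-\bE[(Y)^-\mid\cF^{t,0}_u](\omega)=\bE[Y\mid\cF^{t,0}_u](\omega)$ for $\bP$-a.e.\ $\omega$ (the case $\bE[(Y)^+]<\infty$ is symmetric). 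The very last step is to replace $\cF^{t,0}_u$ by its augmentation $\cF^t_u$ in the conditional expectation: since $\cF^t_u$ is the $\bP$-completion of $\cF^{t,0}_u$, conditioning on the two $\sigma$-algebras gives $\bP$-a.s.\ the same random variable, so $\bE[Y\mid\cF^{t,0}_u]=\bE[Y\mid\cF^t_u]$, $\bP$-a.s.

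I expect no serious obstacle here; the statement is a standard packaging of regular-conditional-probability facts, and all the ingredients (Lemma \ref{Lemma:AlmostSureSets}, the formula \eqref{RegConditProbProp}, and the equality of conditional expectations under completion of the conditioning $\sigma$-algebra) are available. The only mildly delicate point is bookkeeping of null sets: each claim ("$Y$ is $\cF_\omega$-measurable", "$\bE_\omega[(Y)^-]<\infty$", "the identity holds") holds outside its own $\bP$-null set, and one should note that their intersection is again of full measure, so all three hold simultaneously for $\bP$-a.e.\ $\omega$; this is exactly the convention already flagged in the remark preceding the proof of Theorem \ref{thm:DPPdetff}. A second minor care point is that $\bE_\omega[(Y)^\pm]$ is being integrated against $\bP$ in $\omega$, which is legitimate because $\omega\mapsto\bE_\omega[(Y)^\pm]$ is $\cF^{t,0}_u$-measurable by property (ii)/the monotone-class construction, so Tonelli applies.
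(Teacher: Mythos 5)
Your proof is correct and follows essentially the same route as the paper's: both pass to a $\cF^0$-measurable representative $Y^0$, invoke Lemma~\ref{Lemma:AlmostSureSets} to transfer the $\bP$-a.s.\ equality to a $\bP_\omega$-a.s.\ equality (giving $\cF_\omega$-measurability), deduce $\bE_\omega[(Y)^-]<\infty$ from $\bE[(Y)^-]<\infty$, and identify $\bE_\omega[Y]$ with $\bE[Y\mid\cF^{t,0}_u](\omega)=\bE[Y\mid\cF^t_u](\omega)$ via \eqref{RegConditProbProp} together with the completion argument for the conditioning $\sigma$-algebra. The only cosmetic difference is that you spell out the monotone-class/monotone-convergence construction behind \eqref{RegConditProbProp} and state the $\cF^{t,0}_u\to\cF^t_u$ replacement as a known fact, whereas the paper cites \eqref{RegConditProbProp} directly and verifies the replacement by an explicit one-line computation \eqref{eq:A.8}.
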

\begin{proof}
Let us only consider the case $\bE[(Y)^-]<\infty$, as the other case is analogous. Since $Y$ is $\cF$-measurable, then (see, e.g., \cite[Lemma 1.25]{kallenberg2006foundations}) there exists $Y^0$ which is $\cF^0$-measurable and such that $Y=Y^0$, $\bP$-a.s. Thus, by Lemma \ref{Lemma:AlmostSureSets}, we also have that $Y=Y^0$, $\bP_\omega$-a.s.\ for $\bP$-a.e.\ $\omega$. That is, for $\bP$-a.e.\ $\omega$, the set $F:=\{Y= Y^0 \}\in\cF_\omega$ and $\bP_\omega(F)=1$. Let us also denote $N:=\Omega\setminus F=F^c$ so that $N=\{Y\neq Y^0\}$ and $\bP_\omega(N)=0$, for $\bP$-a.e.\ $\omega$. 

Let $B\in\cB(\bR^d)$ and notice that $Y^{-1}(B)=\{\omega'\in\Omega: Y(\omega')\in B\}$ can be decomposed as $\big[Y^{-1}(B)\cap F\big]\cup\big[Y^{-1}(B)\cap N\big]$. Since $\cF_\omega$ is $\bP_\omega$-complete, then $Y^{-1}(B)\cap N\in\cF_\omega$. Moreover,
\[
Y^{-1}(B)\cap F=\{\omega'\in\Omega: Y^0(\omega')\in B\}\cap F\in\cF_\omega
\]
since $Y^0$ is $\cF^0$-measurable. Thus, $Y^{-1}(B)\in\cF_\omega$ for $\bP$-a.e.\ $\omega$, i.e., $Y$ is $\cF_\omega$-measurable.

Thanks to $\bE[(Y)^-]<\infty$, for $\bP$-a.e.~$\omega$, we have that $\bE_\omega[(Y)^-]=\bE[(Y)^-|\cF^{t,0}_u](\omega)<\infty$ . Finally, if $A\in\cF^{t}_u$ then $A=A_1\cup N_1$ with $A_1\in\cF^{t,0}_u$ and $\bP(N_1)=0$. Thus,
		\begin{equation}\label{eq:A.8}
		\bE[\bE[Y|\cF^{t,0}_u]\ind_A]=\bE[\bE[Y|\cF^{t,0}_u]\ind_{A_1\cup N_1}]=\bE[\bE[Y|\cF^{t,0}_u]\ind_{A_1}]=\bE[Y\ind_{A_1}]=\bE[Y\ind_A].
		\end{equation}
Hence, for $\bP$-a.e.~$\omega$, $\bE[Y|\cF^t_u](\omega)=\bE[Y|\cF^{t,0}_u](\omega)=\bE_{\omega}[Y]$, as needed.
\end{proof}

Recall that for $\bP$-a.e.\ $\omega\in\Omega$, the filtration $\{\cF^{u,\omega}_s\}_{s\in[u,T]}$ is defined as the $\bP_\omega$-augmentation of $\{\cF^{u,0}_s\}_{s\in[u,T]}$. In the next lemma we show that, for $\bP$-a.e.\ $\omega\in\Omega$, it actually coincides with the $\bP_\omega$-augmentation of $\{\cF^{t,0}_s\}_{s\in[t,T]}$. This conveys the intuitive idea that by adding all the $\bP_\omega$-null sets to $\{\cF^{u,0}_s\}_{s\in[u,T]}$, we recover the full Brownian filtration started at time $t$.
	
\begin{lemma}[{\cite[Lemma 2.26]{fabbri2017stochastic}}]\label{Lemma:FiltrationsInclusion}
For $\bP$-a.e.~ $\omega$,
$\cF^{u,\omega}_{s}$ coincides with $\cF^{t,0}_s$ augmented by the $\bP_\omega$-null sets of $\cF^0$.
\end{lemma}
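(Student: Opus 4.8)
The plan is to prove the two inclusions separately, after reducing everything to $\sigma$-algebras of sets in $\cF^0$. Denote by $\cG_s$ the $\bP_\omega$-augmentation of $\cF^{t,0}_s$ inside $\cF^0$, i.e.\ $\cG_s=\{A\cup N_1: A\in\cF^{t,0}_s,\ N_1\subseteq N\in\cF^0,\ \bP_\omega(N)=0\}$. We must show $\cF^{u,\omega}_s=\cG_s$ for $\bP$-a.e.\ $\omega$ and all $s\in[u,T]$. Since both families are increasing in $s$ and generated by raw $\sigma$-algebras plus the same collection of $\bP_\omega$-null sets, it suffices to compare $\cF^{u,0}_s$ with $\cF^{t,0}_s$ modulo $\bP_\omega$-null sets, and then (because a countable intersection of full-measure sets of $\omega$ is still full-measure) it is enough to establish the identity for a fixed $s$ in a countable dense subset of $[u,T]$ and pass to the limit via left-continuity of the raw Brownian filtration (recall item (q) in Section~\ref{sec:notation}).

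First I would handle the easy inclusion $\cF^{u,\omega}_s\subseteq\cG_s$. This only requires $\cF^{u,0}_s\subseteq\cF^{t,0}_s$, which is immediate: $W^u_r=W_r-W_u$ with $r\in[u,s]$ is a measurable function of $(W_v)_{v\in[t,s]}$, so $\sigma(W^u_r:r\in[u,s])\subseteq\cF^{t,0}_s$; augmenting both sides by $\bP_\omega$-null sets of $\cF^0$ preserves the inclusion. The substantive direction is $\cG_s\subseteq\cF^{u,\omega}_s$, i.e.\ every $A\in\cF^{t,0}_s$ is, up to a $\bP_\omega$-null set, measurable with respect to $\sigma(W^u_r:r\in[u,s])$. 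Here is where the regular conditional probability does the work: by \eqref{ConstRegConditProb} applied to the $\cF^{t,0}_u$-measurable map $\omega\mapsto (W_v(\omega))_{v\in[t,u]}$ (valued in the Polish space $C([t,u];\bR^{d'})$), for $\bP$-a.e.\ $\omega$ the whole path $(W_v)_{v\in[t,u]}$ is $\bP_\omega$-a.s.\ equal to the constant path $(W_v(\omega))_{v\in[t,u]}$. Consequently, $\bP_\omega$-a.s.\ one has $W_v=W_u(\omega)+W^v$ for $v\in[u,s]$ where $W^v:=W_v-W_u=W^u_v$, and $W_v=W_v(\omega)$ (a constant) for $v\in[t,u]$. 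Thus $\bP_\omega$-a.s.\ every coordinate $W_v$, $v\in[t,s]$, agrees with a fixed ($\omega$-dependent) measurable function of the increments $(W^u_r)_{r\in[u,s]}$. It follows that for each $A\in\cF^{t,0}_s$ there is $A'\in\cF^{u,0}_s$ with $\bP_\omega(A\triangle A')=0$, hence $A\in\cF^{u,\omega}_s$.

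To make the previous paragraph fully rigorous I would argue by a monotone-class / generating-sets argument: the claim ``$A$ equals some $\cF^{u,0}_s$-set up to a $\bP_\omega$-null set'' holds for the generating cylinders $\{W_{v_1}\in B_1,\dots,W_{v_k}\in B_k\}$ with $v_i\in[t,s]$ and $B_i\in\cB(\bR^{d'})$ (split each $v_i$ into $v_i\le u$, giving a constant, and $v_i>u$, giving a function of increments), and this collection is closed under the set operations needed so that the good sets form a $\sigma$-algebra containing the generators, hence all of $\cF^{t,0}_s$. The only point requiring care — and the main obstacle — is the exchange of quantifiers: \eqref{ConstRegConditProb} gives, for each fixed $s$ and each fixed generating set, a full-measure set of $\omega$'s; one must ensure a single $\bP$-full-measure set of $\omega$ works simultaneously for all $A\in\cF^{t,0}_s$ and all $s$. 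This is resolved precisely as indicated above: it suffices to have the path-constancy statement on $(W_v)_{v\in[t,u]}$ on one $\bP$-full set (that single statement immediately implies the $\bP_\omega$-a.s.\ representation for \emph{all} generators at once), together with left-continuity of the raw filtration to reduce $s$ to a countable set. Since this lemma is quoted from \cite[Lemma 2.26]{fabbri2017stochastic}, I would present the argument in this streamlined form and refer the reader there for the infinite-dimensional version, noting that the proof carries over verbatim because the Brownian motion here is finite-dimensional.
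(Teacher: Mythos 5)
Your argument is correct, and it follows the same overall strategy as the paper's proof — reduce to showing $\cF^{t,0}_s\subseteq\cF^{u,\omega}_s$, use \eqref{ConstRegConditProb} to make the past $\bP_\omega$-a.s.\ constant, then decompose generators of $\cF^{t,0}_s$ into a constant part (times before $u$) and an increment part (times after $u$). The genuine difference is in how \eqref{ConstRegConditProb} is invoked. The paper applies it countably many times, once for each random variable $W_{u\wedge t_{m,n}}$ with $t_{m,n}$ ranging over a dense set, producing full-measure sets $\Omega_{m,n}$ that are then intersected; this forces a second countability/left-continuity step to pass from $s$ in a dense set to all $s\in[u,T]$. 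You instead apply \eqref{ConstRegConditProb} once to the $\cF^{t,0}_u$-measurable map $\omega\mapsto(W_v(\omega))_{v\in[t,u]}$ valued in the Polish space $C([t,u];\bR^{d'})$. This single application already yields, for $\bP$-a.e.\ $\omega$, a $\bP_\omega$-full set $\Omega_\omega\in\cF^0$ on which the whole past path is frozen, and then every generator $W^{-1}_v(B)$ with $v\in[t,s]$ lies in $\cF^{u,\omega}_s$ by splitting over $\Omega_\omega$ and its ($\bP_\omega$-null) complement. Since $\cF^{u,\omega}_s$ is already a $\sigma$-algebra, the ``good sets'' argument collapses to the trivial observation that it contains the generators of $\cF^{t,0}_s$; no Dynkin system is needed. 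A small remark: once you have the path-constancy on a single $\bP$-full set, the reduction to a countable dense set of $s$ and the appeal to left-continuity of the raw filtration — which you keep as a safety net — are in fact superfluous: the same $\Omega_\omega$ works for all $s\in[u,T]$ simultaneously, since $\Omega_\omega\in\cF^0$ is $\bP_\omega$-full (hence lies in every $\cF^{u,\omega}_s$) and each generator $W^{-1}_v(B)$ is handled by the same decomposition regardless of $s\geq v$. Your route thus buys a slightly more economical proof at the cost of working with path-valued rather than $\bR^{d'}$-valued random variables in \eqref{ConstRegConditProb}.
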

\begin{proof}
Let $D:=\{t_n \}_{n\in\bN}$ be a dense subset of $[t,T]$. As $W$ has continuous paths, we have that $\cF^{t,0}_s=\sigma(W_{r}: r\in D\cap[t,s])$ for any $s\in[t,T]$ and $\cF^{u,0}_s=\sigma(W_{r}-W_u: r\in D\cap[u,s])$ for any $s\in[u,T]$. Clearly $\cF^{u,0}_s\subseteq \cF^{t,0}_s$ for any $s\in[u,T]$. Therefore, for $\bP$-a.e.\ $\omega$, it must be $\cF^{u,\omega}_s\subseteq \cF^{t,\omega}_s$ for all $s\in[u,T]$, where $\cF^{t,\omega}_s$ is the $\sigma$-algebra $\cF^{t,0}_s$ augmented by the $\bP_\omega$-null sets of $\cF^0$.
To complete our proof we next show that, for $\bP$-a.e.\ $\omega$, $\cF^{t,0}_s\subseteq \cF^{u,\omega}_s$ for every $s\in[u,T]$.
Let $s_m\in D\cap[u,T]$ and $t_{m,n}\in D\cap[t,s_m]$. 
By \eqref{ConstRegConditProb}, for each couple $(m,n)$ there is $\Omega_{m,n}\subset\Omega$ such that $\bP(\Omega_{m,n})=1$ and for every $\omega\in\Omega_{m,n}$ it holds $\bP_\omega (F^{m,n}_\omega)=1$, where
\[
F^{m,n}_\omega:=\{\omega'\in\Omega : W_{u\wedge t_{m,n}}(\omega')= W_{u\wedge t_{m,n}}(\omega)\}.
\]
We denote $N^{m,n}_\omega:=\Omega\setminus F^{m,n}_\omega=(F^{m,n}_\omega)^c$ and also recall that $\cF^{t,0}_T\subseteq\cF^0$. Therefore, $F^{m,n}_\omega\in\cF^0$ for each $\omega\in \Omega_{m,n}$ and, in particular, $F^{m,n}_\omega\in \cF^{u,\omega}_s$ for all $s\in[u,T]$ and all $\omega\in\Omega_{m,n}$.
		
Fix $\omega\in\Omega_{m,n}$. For any $B\in\cB(\bR^{d'})$, we have
\[
W^{-1}_{t_{m,n}}(B)=\big[W^{-1}_{t_{m,n}}(B)\cap F^{m,n}_\omega\big]\cup\big[W^{-1}_{t_{m,n}}(B)\cap N^{m,n}_\omega\big].
\]
Since $\cF^{u,\omega}_{s_m}$ contains all the $\bP_\omega$-null sets, then $W^{-1}_{t_{m,n}}(B)\cap N^{m,n}_\omega\in\cF^{u,\omega}_{s_m}$. For $t_{m,n}\le u$, we have that $W^{-1}_{t_{m,n}}(B)\cap F^{m,n}_\omega$ is either equal to $\Omega$ or $\varnothing$, depending on whether $W_{t_{m,n}}(\omega)\in B$ or not. Thus, $W^{-1}_{t_{m,n}}(B)\cap F^{m,n}_\omega\in\cF^{u,\omega}_{s_m}$ in that case.	For $u<t_{m,n}\leq s_m$, we have $W_{t_{m,n}}=W^u_{t_{m,n}}+W_u$. Then, 
\begin{align*}
W_{t_{m,n}}^{-1}(B)\cap F^{m,n}_\omega=\big\{\omega'\in\Omega: W_u(\omega')=W_u(\omega)\}\cap\{\omega'\in\Omega:W^u_{t_{m,n}}(\omega')\in \big(B-W_u(\omega)\big)\},
\end{align*}
where $B-x:=\{y-x\in\bR^{d'}, y\in B\}$. Both sets on the right-hand side of the above equation are $\cF^{u,\omega}_{s_m}$-measurable, so $W_{t_{m,n}}^{-1}(B)\in\cF^{u,\omega}_{s_m}$ also in this case.

So far we have proven that for any given $s_m\in D\cap [u,T]$ and $t_{m,n}\in D\cap[t,s_m]$, and for all $\omega\in \Omega_{m,n}$, we have $W^{-1}_{t_{m,n}}(B)\in \cF^{u,\omega}_{s_m}$ for any $B\in\cB(\bR^{d'})$.
Let $\bar{\Omega}:=\cap_{m,n\in\bN} \Omega_{m,n}$. Thus, $\bP(\bar{\Omega})=1$ and for every $\omega\in\bar{\Omega}$ we have that $W_r^{-1}(B)\in\cF^{u,\omega}_s$ for every $s\in D\cap[u,T]$, $r\in D\cap[t,s]$ and $B\in\cB(\bR^{d'})$. Therefore, for $\bP$-a.e.~$\omega$, we obtain
\begin{equation*}
\cF^{t,0}_s\subseteq\cF^{u,\omega}_s, \quad \text{for all $s\in D\cap[u,T]$}.
\end{equation*}
If $s\in [u,T]\setminus D$, then by left-continuity of $\{\cF^{t,0}_s \}$, we have
\begin{equation*}
\cF^{t,0}_s=\sigma\big(\cup_{r\in D\cap[t,s)} \cF^{t,0}_r\big)\subseteq \cF^{u,\omega}_s \quad \text{for all $\omega\in\bar{\Omega}$}.
\end{equation*}
		
Thus, we have shown that, for $\bP$-a.e.~$\omega$, we have that $\cF^{t,0}_s\subseteq\cF^{u,\omega}_{s}$ for every $s\in[u,T]$ as needed to conclude. 	
\end{proof}

In the next proposition, for $\bP$-a.e.\ $\omega\in \Omega$, we generate reference probability systems $\nu_\omega$ starting at time $u\in[t,T]$, i.e., $\nu_\omega\in\cV_u$ and then we construct admissible controls on $\nu_\omega$ starting from ones in the standard reference probability system $\nu\in\cV_t$. The probability measure on the reference probability systems $\nu_\omega$ is the regular conditional probability $\bP_\omega$.
To obtain these results, it is convenient to recall the notation from Section \ref{sec:ind}. Recall also that given any admissible treble $(\tau,\alpha,\xi)$, either in $\AA^\nu_{t}$ or $\AA^\nu_{t,\bar z -z}$, and letting $\eta^\tau$ be the increasing process associated to $\tau$, it is possible to construct a $\{\cF^{t,0}_s\}$-predictable treble $(\eta^{\tau,0},\alpha^0,\xi^0)$ which is also admissible (see Lemma \ref{Lemma:AlphaAlpha0} and \ref{Lemma:Predict0Indisting}).

\begin{proposition}\label{Prop:StdRefProbSyst}
Fix $\nu\in\cV_t$. Let $(\tau,\alpha,\xi)\in\AA^\nu_t$ and denote by $(\eta^{\tau,0},\alpha^0,\xi^0)$ the associated $\{\cF^{t,0}_s\}$-predictable admissible treble. Set $\xi^{0,u}_s:=\xi^0_s-\xi^0_u$, $\eta^{0,u}_s:=\ind_{\{s>u\}}\eta^{\tau,0}_s$ for $s\in[u,T]$ and let  $\alpha^{0,u}$ be the restriction of $\alpha^0$ to $[u,T]$. Finally, let
\[
\nu_\omega:=(\Omega,\cF_\omega,\bP_\omega,\{\cF^{u,\omega}_s\},W^u).
\]
Then, for $\bP$-a.e.\ $\omega\in\Omega$, we have $\nu_\omega\in\cV_u$ (i.e., $\nu_\omega$ is a legitimate reference probability system staring at time $u$) and $(\eta^{0,u},\alpha^{0,u},\xi^{0,u})\in\AA^{\nu_\omega}_u$. The same result holds if we replace $\AA^\nu_t$ and $\AA^{\nu_\omega}_u$ with $\AA^\nu_{t,\bar z-z}$ and $\AA^{\nu_\omega}_{u,\bar z-Z^{\xi,0}_u(\omega)}$, respectively, where $Z^{\xi,0}_u:=z+V_{[t,u]}(\xi^0)$.
\end{proposition}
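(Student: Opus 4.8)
The plan is to verify the defining properties of a reference probability system for $\nu_\omega$ and then check, one by one, the admissibility conditions (a)--(iv)/(iv') listed in Section~\ref{sec:notation}, items (k), (l), (n), (p), for the restricted treble $(\eta^{0,u},\alpha^{0,u},\xi^{0,u})$ on $\nu_\omega$. First I would observe that $(\Omega,\cF_\omega,\bP_\omega)$ is a complete probability space by construction of $\cF_\omega$ (the $\bP_\omega$-completion of the standard $\sigma$-algebra $\cF^0$), and that $W^u_s=W_s-W_u$ is $\bP$-a.s.\ continuous with $W^u_u=0$; the nontrivial point is that $W^u$ is an $\{\cF^{u,\omega}_s\}$-Brownian motion under $\bP_\omega$. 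For this I would invoke the independence-of-the-past property of regular conditional probabilities: since $\{W^u_s-W^u_r:u\le r\le s\le T\}$ is independent of $\cF^{t,0}_u$ under $\bP$ and has Gaussian increments, property \eqref{RegConditProbProp} (applied to the bounded functions $\e^{i\langle\lambda,W^u_s-W^u_r\rangle}$) shows that for $\bP$-a.e.\ $\omega$ the increments of $W^u$ retain exactly their $\bP$-law under $\bP_\omega$ and remain independent in the appropriate sense; combined with Lemma~\ref{Lemma:FiltrationsInclusion} (which identifies $\cF^{u,\omega}_s$ with $\cF^{t,0}_s$ augmented by $\bP_\omega$-null sets) this yields that $W^u$ is a $\{\cF^{u,\omega}_s\}$-Brownian motion under $\bP_\omega$ for $\bP$-a.e.\ $\omega$. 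Hence $\nu_\omega\in\cV_u$ for $\bP$-a.e.\ $\omega$.

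Next I would check admissibility of the controls. For $\alpha^{0,u}$: the process $\alpha^0$ is $\{\cF^{t,0}_s\}$-progressively measurable, hence its restriction to $[u,T]$ is $\{\cF^{t,0}_s\}$-progressively measurable, and by Lemma~\ref{Lemma:FiltrationsInclusion} it is $\{\cF^{u,\omega}_s\}$-progressively measurable for $\bP$-a.e.\ $\omega$; it takes values in $\cK$ by hypothesis, so $\alpha^{0,u}\in\cA^{\nu_\omega}_u$. For $\xi^{0,u}$ and $\eta^{0,u}$: left-continuity, bounded variation (or the $\{0,1\}$-valued non-decreasing property), and the initialization $\xi^{0,u}_u=0$, $\eta^{0,u}_s=0$ on $[u,u]$ hold $\bP$-a.s.\ by construction, hence hold on a set $\Omega'\in\cF^0$ with $\bP(\Omega')=1$; by Lemma~\ref{Lemma:AlmostSureSets} we get $\bP_\omega(\Omega')=1$ for $\bP$-a.e.\ $\omega$, so these pathwise properties persist $\bP_\omega$-a.s. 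Adaptedness to $\{\cF^{u,\omega}_s\}$ again follows from Lemma~\ref{Lemma:FiltrationsInclusion} since $\xi^0,\eta^{\tau,0}$ are $\{\cF^{t,0}_s\}$-predictable and the shifts $\xi^0_s-\xi^0_u$, $\ind_{\{s>u\}}\eta^{\tau,0}_s$ remain $\{\cF^{t,0}_s\}$-adapted. This already gives $(\eta^{0,u},\alpha^{0,u},\xi^{0,u})\in\cE^{\nu_\omega}_u\times\cA^{\nu_\omega}_u\times(\text{caglad BV processes on }\nu_\omega)$, so that $\tau^{\eta^{0,u}}\in\cT^{\nu_\omega}_u$ via \eqref{eq:etatau}.

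It remains to handle the integrability/fuel condition, which I expect to be the main obstacle. In the infinite-fuel case I must show $\bE_\omega[|V_{[u,T]}(\xi^{0,u})|^p]<\infty$ for $\bP$-a.e.\ $\omega$; since $V_{[u,T]}(\xi^{0,u})\le V_{[t,T]}(\xi^0)$ pathwise and $\bE[|V_{[t,T]}(\xi^0)|^p]<\infty$, property \eqref{RegConditProbProp} (with $X=|V_{[t,T]}(\xi^0)|^p\ge 0$) gives $\bE_\omega[|V_{[t,T]}(\xi^0)|^p]=\bE[\,|V_{[t,T]}(\xi^0)|^p\mid\cF^{t,0}_u](\omega)<\infty$ for $\bP$-a.e.\ $\omega$, whence the bound. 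In the finite-fuel case I must show $\bP_\omega(V_{[u,T]}(\xi^{0,u})\le\bar z-Z^{\xi,0}_u(\omega))=1$ for $\bP$-a.e.\ $\omega$: note $V_{[t,T]}(\xi^0)=V_{[t,u]}(\xi^0)+V_{[u,T]}(\xi^{0,u})$ by additivity of the total variation (using left-continuity and that $\xi^{0,u}$ is the increment after $u$), so the $\bP$-a.s.\ constraint $V_{[t,T]}(\xi^0)\le\bar z$ rewrites as $V_{[u,T]}(\xi^{0,u})\le\bar z-Z^{\xi,0}_u$; since $Z^{\xi,0}_u=z+V_{[t,u]}(\xi^0)$ is $\cF^{t,0}_u$-measurable, \eqref{ConstRegConditProb} makes it $\bP_\omega$-a.s.\ equal to the constant $Z^{\xi,0}_u(\omega)$, and Lemma~\ref{Lemma:AlmostSureSets} transfers the $\bP$-a.s.\ inequality to a $\bP_\omega$-a.s.\ inequality. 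This gives $(\eta^{0,u},\alpha^{0,u},\xi^{0,u})\in\AA^{\nu_\omega}_{u,\bar z-Z^{\xi,0}_u(\omega)}$ for $\bP$-a.e.\ $\omega$, completing the proof. The one delicate bookkeeping point throughout is that each ``$\bP$-a.e.\ $\omega$'' refers to a different null set, but since only finitely many properties are involved one intersects them into a single universal full-measure set, exactly as in the Remark preceding the proof of Theorem~\ref{thm:DPPdetff}.
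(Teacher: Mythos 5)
Your plan follows the same overall structure as the paper's proof: completeness of $(\Omega,\cF_\omega,\bP_\omega)$ by construction, Brownian motion verification via regular conditional probabilities, then admissibility via Lemma~\ref{Lemma:FiltrationsInclusion} for adaptedness, Lemma~\ref{Lemma:AlmostSureSets} for pathwise properties, and the tower property (respectively \eqref{ConstRegConditProb}) for the integrability/fuel condition. The admissibility part is essentially correct and mirrors the paper.

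However, there is a genuine gap in the Brownian motion verification. You write that applying \eqref{RegConditProbProp} to the functions $\e^{i\langle\lambda,W^u_s-W^u_r\rangle}$ ``shows that for $\bP$-a.e.\ $\omega$ the increments of $W^u$ retain exactly their $\bP$-law under $\bP_\omega$ and remain independent in the appropriate sense,'' and at the end you justify the bookkeeping with ``only finitely many properties are involved so one intersects them into a single full-measure set.'' That last remark is the problem: the Brownian property requires the characteristic-function identity
\begin{equation}
\bE_\omega\big[e^{i\langle y,W^u_s-W^u_r\rangle}\ind_A\big]=e^{-\frac{|y|^2}{2}(s-r)}\bE_\omega[\ind_A]
\end{equation}
to hold for \emph{all} $u\le r<s\le T$, $y\in\bR^{d'}$, and $A\in\cF^{t,0}_r$, which is an uncountable family, and each application of \eqref{RegConditProbProp} produces a null set depending on $(r,s,y,A)$. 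Naively intersecting over all of them would not give a measurable set. The paper circumvents this by first restricting to rational $r,s$, rational $y$, and $A$ ranging over a countable $\pi$-system $\cH$ generating $\cF^{t,0}_r$ (using path continuity of $W$ to get countable generators), intersecting the resulting countably many full-measure sets, applying a $\pi$-system argument to pass from $\cH$ to all of $\cF^{t,0}_r$, and then extending to arbitrary $(r,s,y)$ by density and dominated convergence. Without this reduction to a countable family the argument is incomplete, so you should make this step explicit rather than fold it into the finite-intersection remark.
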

	
\begin{proof}
We first want to prove that $\nu_\omega\in\cV_u$ for $\bP$-a.e.~ $\omega$. By construction, $(\Omega,\cF_\omega,\bP_\omega)$ is a complete probability space. It remains to show that, for $\bP$-a.e.~$\omega$, the process $W^u$ is a Brownian motion on $(\Omega,\cF_\omega,\bP_\omega)$ such that $\bP_\omega(W^u_u=0)=1$. 

Obviously, $W_u^u=0$ and $W^u$ has continuous paths. To prove it is a Brownian motion,
we must show that (see, e.g., \cite[Proposition 1.2.7]{morimoto2010stochastic}), for $\bP$-a.e.~$\omega$,
\begin{equation}\label{W^uBrownMot}
\bE_\omega[e^{i\langle y,W^u_{s}-W^u_{r}\rangle}|\cF^{u,\omega}_{r}]=\e^{-\frac{|y|^2}{2}(s-r)}, \quad \text{$\bP_\omega$-a.s.},
\end{equation}
for all $u\leq r<s\leq T$ and $y\in\bR^{d'}$.
		
Since, for $\bP$-a.e.~$\omega$, $\{\cF^{u,\omega}_{r}\}$ is the augmentation of $\{\cF^{t,0}_{r}\}$ with the $\bP_\omega$-null sets (see Lemma \ref{Lemma:FiltrationsInclusion}), then an argument as in \eqref{eq:A.8} guarantees that, for $\bP$-a.e.~$\omega$,
\begin{equation}
\bE_\omega[e^{i\langle y,W^u_{s}-W^u_r\rangle}|\cF^{u,\omega}_{r}]=\bE_\omega[e^{i\langle y,W^u_{s}-W^u_{r}\rangle)}|\cF^{t,0}_{r}], \quad\text{$\bP_\omega$-a.s.}, 
\end{equation}
for all $u\leq r<s\leq T$ and $y\in\bR^{d'}$. Then, \eqref{W^uBrownMot} is equivalent to proving that, for $\bP$-a.e.~$\omega$,
\begin{equation}\label{W^uBrownMotEquiv}
\bE_\omega[e^{-\frac{|y|^2}{2}(s-r)}\ind_A]=\bE_\omega[e^{i\langle y,W^u_{s}-W^u_{r}\rangle}\ind_A], \quad\text{for all $u\leq r<s\leq T$, $y\in\bR^{d'}$ and $A\in\cF^{t,0}_{r}$}.
\end{equation}
		
Now let $r\in [u,T]\cap\bQ$, $D:=[t,r]\cap\bQ$ and $(B_k)_{k}$ be a countable generator of $\cB(\bR^{d'})$. Since $W$ is continuous, we have that $\cF^{t,0}_r=\sigma(\cG)$ where $\cG=\{W_q^{-1}(B_k), \: q\in D, \: k\in\bN \}$ is a countable generator. Let $\cH:=\pi(\cG)$ be the $\pi$-system generated by $\cG$, then $\cH$ is a countable $\pi$-system generating $\cF^{t,0}_r$. Thus, for $s\in(r,T]\cap\bQ$, $y\in\bQ^{d'}$ and $A\in\cH$, we have
\begin{align}\label{eq:BMomega}
\bE_\omega[e^{-\frac{|y|^2}{2}(s-r)}\ind_A]&=\bE_\omega[\bE[e^{i\langle y,W_{s}-W_{r}\rangle}|\cF^t_{r}]\ind_A] \nonumber \\
&=\bE_\omega[\bE[e^{i\langle y,W^u_{s}-W^u_{r}\rangle}\ind_A|\cF^t_{r}]] \nonumber \\
&=\bE[\bE[e^{i\langle y,W^u_{s}-W^u_{r}\rangle}\ind_A|\cF^t_{r}]|\cF^{t,0}_u](\omega) \nonumber \\
&=\bE_\omega[e^{i\langle y,W^u_{s}-W^u_{r}\rangle}\ind_A], \quad \bP\text{-a.e.~} \omega,
\end{align}
where in the first line we have used that $W$ is a Brownian motion on $(\Omega,\cF,\bP)$, in the third line we have used the regular conditional probability property \eqref{RegConditProbProp} and in the fourth line we have used the tower property. Thus, we have just shown that for every $r,s,y$ and $A$ (taken in their respective countable sets) there exists $\Omega^A_{r,s,y}\subseteq\Omega$ with $\bP(\Omega^A_{r,s,y})=1$ such that \eqref{eq:BMomega} holds for every $\omega\in\Omega^A_{r,s,y}$. Then, take
\[
\bar \Omega= \bigcap_{\substack{u\le r< s\le T \\ r,s\in\bQ}}\bigcap_{y\in\bQ^{d'}}\bigcap_{A\in\cH}\Omega^A_{r,s,y},
\]
so that $\bP(\bar{\Omega})=1$ and \eqref{eq:BMomega} holds for every $\omega\in\bar{\Omega}$ and every $u\leq r< s\leq T$, $r,s\in\bQ$, $y\in\bQ^{d'}$, $A\in\cH$. Since $\cH$ is a countable $\pi$-system generating $\cF^{t,0}_r$, then (see, e.g., \cite[Theorem 1.1]{baldi2017stochastic}) we obtain that \eqref{eq:BMomega} holds in fact for every $A\in\cF^{t,0}_r$. Namely, for $\bP$-a.e.~$\omega$, we have that 
\begin{equation}\label{W^uBrownMotQ}
\bE_\omega[e^{-\frac{|y|^2}{2}(s-r)}\ind_A]=\bE_\omega[e^{i\langle y,W^u_{s}-W^u_{r}\rangle}\ind_A],
\end{equation}
for every $u\leq r< s\leq T$, $r,s\in\bQ$, $y\in\bQ^{d'}$ and $A\in\cF^{t,0}_r$.
For arbitrary $r\in[u,T]$, $s\in[r,T]$, $y\in\bR^{d'}$, we select sequences $(r_n)_n\subseteq [u,T]\cap\bQ$, $(s_n)_n\subseteq [r,T]\cap\bQ$ and $(y_n)_n\subseteq\bQ^{d'}$ such that $r_n\to r$, $s_n\to s$ and $y_n\to y$ as $n\to\infty$. Then, for all $\omega\in\bar \Omega$, using dominated convergence we obtain
\begin{align*}
\bE_\omega\big[e^{-\frac{|y|^2}{2}(s-r)}\ind_A\big]=\lim_{n\to\infty}\bE_\omega\big[e^{-\frac{|y_n|^2}{2}(s_n-r_n)}\ind_A\big]=\lim_{n\to\infty}\bE_\omega\big[e^{i\langle y_n,W^u_{s_n}-W^u_{r_n}\rangle}\ind_A\big]=\bE_\omega\big[e^{i\langle y,W^u_{s}-W^u_{r}\rangle}\ind_A\big], 
\end{align*}
for all $A\in\cF^{t,0}_r$, where the second equality is by \eqref{W^uBrownMotQ}. This concludes the proof of \eqref{W^uBrownMot} and therefore $\nu_\omega\in\cV_u$ is a reference probability system.
		
It remains to prove that $(\eta^{0,u},\alpha^{0,u},\xi^{0,u})$ is an admissible treble. Since, for $\bP$-a.e.~$\omega$, $\cF^{t,0}_s\subseteq\cF^{u,\omega}_{s}$ for every $s\in[u,T]$ (see Lemma \ref{Lemma:FiltrationsInclusion}), then $\alpha^0$ is $\{\cF^{u,\omega}_{s} \}$-progressively measurable and $\eta^{0,u}$ and $\xi^{0,u}$ are $\{\cF^{u,\omega}_{s} \}$-adapted. By construction $\eta^{0,u}$ is left-continuous and non-decreasing with $\eta^{0,u}\in\{0,1\}$ and $\eta^{0,u}_u=0$, $\bP$-a.s. By Lemma \ref{Lemma:AlmostSureSets}, these properties hold $\bP_\omega$-a.s.~for $\bP$-a.e.~$\omega$ and so $\eta^{0,u}\in\cE^{\nu_\omega}_u$. Similarly, we obtain that $\xi^{0,u}$ is left-continuous, of bounded variation and with $\xi^{0,u}_u=0$, $\bP_\omega$-a.s.~for $\bP$-a.e.~$\omega$. 

If $\xi\in\cX^\nu_t(\bar z - z)$, then by definition $V_{[t,T]}(\xi^{0})\le \bar z-z$, $\bP$-a.s. Recall the process $Z^\xi_s=z+V_{[t,s]}(\xi)$ from \eqref{eq:Z} and let $Z^{\xi,0}_s=z+V_{[t,s]}(\xi^0)$ be its $\{\cF^{t,0}_s\}$-predictable counterpart. Then, using \eqref{ConstRegConditProb} and Lemma \ref{Lemma:AlmostSureSets}, it is immediate to check that $V_{[u,T]}(\xi^{0,u})\le \bar z-Z^{\xi,0}_u(\omega)$, $\bP_\omega$-a.s.\ for $\bP$-a.e.\ $\omega$. Thus, $\xi^{0,u}\in\cX^{\nu_\omega}_u(\bar z-Z^{\xi,0}_u(\omega))$ for $\bP$-a.e.\ $\omega\in\Omega$. If instead $\xi\in\cX^\nu_t$, then
\begin{align*}
\bE\big[\bE_\omega\big[|V_{[u,T]}(\xi^{0,u})|^p\big]\big]=\bE\big[\bE\big[|(V_{[u,T]}(\xi^{0,u})|^p\big|\cF^{t,0}_u\big](\omega)\big]\le \bE\big[|V_{[t,T]}(\xi^{0})|^p\Big]<\infty.
\end{align*}
Therefore,
\[
\bE_\omega\big[|V_{[u,T]}(\xi^{0,u})|^p\big]<\infty,\quad\text{for $\bP$-a.e.\ $\omega\in\Omega$},
\]
which guarantees $\xi^{0,u}\in\cX^{\nu_\omega}_u$ for $\bP$-a.e.\ $\omega$.

We then conclude that the treble $(\eta^{0,u},\alpha^{0,u},\xi^{0,u})$ is admissible in $\nu_\omega$ for $\bP$-a.e.\ $\omega$.
\end{proof}

The final lemma in this appendix is essentially a repetition of \cite[Lemma 11]{claisse2016pseudo}. It is shown that the stochastic integral with respect to the underlying probability $\bP$ is indistinguishable of the stochastic integral with respect to the regular conditional probability $\bP_\omega$ for $\bP$-a.e.~$\omega$. 
Let $\{H_s\}_{s\in[t,T]}$ be an adapted process on $\nu=(\Omega,\cF,\bP,\{\cF^t_s\},W)$ which is $\bP$-a.s.\ square-integrable in $[t,T]$ with respect to the Lebesgue measure. Then,
\begin{align}\label{eq:Pint}
\bP\left(\int_t^T H^2_s \ud s<\infty\right)=1
\end{align}
implies, by Lemma \ref{Lemma:AlmostSureSets}, that
\begin{align*}
\bP_\omega\left(\int_t^T H^2_s \ud s<\infty\right)=1,\:\:\text{for $\bP$-a.e.\ $\omega$}.
\end{align*}
For such a process, we denote by
	\begin{equation}
	\int^\bP_{[u,s]} H_r\ud W^u_r
	\end{equation}
	the stochastic integral of $H$ against $W^u$ when constructed with respect to the probability $\bP$ and, similarly, by
	\begin{equation}
	\int^{\bP_\omega}_{[u,s]} H_r\ud W^u_r
	\end{equation}
	the stochastic integral of $H$ against $W^u$ when constructed with respect to the regular conditional probability $\bP_\omega$. 
	
\begin{lemma}[{\cite[Lemma 11]{claisse2016pseudo}}]\label{Lemma:StochIntRegCondProb}
Let $H$ be an $\{\cF^t_s \}$-adapted process such that \eqref{eq:Pint} holds.
Then, $\bP_\omega$-a.s.\ for $\bP$-a.e.\ $\omega$, we have
\begin{equation}\label{StochIntRegCondProb}
\int^\bP_{[u,s]}H_r \ud W^u_r=\int^{\bP_\omega}_{[u,s]}H_r\ud W^u_r, \quad \text{for all $s\in[u,T]$}.
\end{equation}
\end{lemma}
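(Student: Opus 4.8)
The plan is to run the classical approximation scheme for the It\^o integral and transfer each step from $\bP$ to $\bP_\omega$ via the regular conditional probability identity \eqref{RegConditProbProp} and Lemma \ref{Lemma:AlmostSureSets}. First I would localise so as to reduce to a square-integrable integrand: set $\tau_n:=\inf\{s\ge u:\int_u^s H_r^2\ud r\ge n\}\wedge T$, which by Lemma \ref{Lemma:FiltrationsInclusion} is simultaneously an $\{\cF^t_s\}$- and an $\{\cF^{u,\omega}_s\}$-stopping time, and note $\tau_n\uparrow T$ both $\bP$-a.s.\ (by \eqref{eq:Pint}) and $\bP_\omega$-a.s.\ for $\bP$-a.e.\ $\omega$ (by Lemma \ref{Lemma:AlmostSureSets}). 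Since the integrals localise consistently under both measures, and since \eqref{StochIntRegCondProb} for $H\ind_{\{\cdot\le\tau_n\}}$ for every $n\in\bN$ yields the general case after intersecting countably many $\bP$-null sets and letting $n\to\infty$, it suffices to treat $H$ with $\bE[\int_u^T H_r^2\ud r]<\infty$; then \eqref{RegConditProbProp} also gives $\bE_\omega[\int_u^T H_r^2\ud r]<\infty$ for $\bP$-a.e.\ $\omega$. I would also replace $H$ by an $\{\cF^{t,0}_s\}$-predictable version coinciding $\ud s\times\bP$-a.e.\ (as in Lemma \ref{Lemma:AlphaAlpha0}), which changes neither integral; recall that $W^u$ is a $\bP_\omega$-Brownian motion for the filtration $\{\cF^{u,\omega}_s\}$ for $\bP$-a.e.\ $\omega$ by Proposition \ref{Prop:StdRefProbSyst}, so the $\bP_\omega$-integral is well-defined.

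Next I would approximate: pick bounded, $\{\cF^{t,0}_s\}$-predictable simple processes $H^{(k)}=\sum_j\zeta_j\ind_{(s_j,s_{j+1}]}$ with $\zeta_j$ bounded and $\cF^{t,0}_{s_j}$-measurable and $H^{(k)}\to H$ in $L^2(\ud s\times\bP)$, by the standard density statement. For such a simple process the elementary integral is the \emph{pathwise} finite sum $\sum_j\zeta_j\big(W^u_{s_{j+1}\wedge s}-W^u_{s_j\wedge s}\big)$, hence literally the same random variable whether built under $\bP$ or under $\bP_\omega$ (using $\cF^{t,0}_{s_j}\subseteq\cF^{u,\omega}_{s_j}$ from Lemma \ref{Lemma:FiltrationsInclusion} to check the $\zeta_j$ are admissible $\bP_\omega$-integrands). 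So \eqref{StochIntRegCondProb} holds trivially for every $H^{(k)}$ and every $s$, for $\bP$-a.e.\ $\omega$.

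The crux is the passage to the limit together with the book-keeping of exceptional null sets. From $\bE[\int_u^T|H^{(k)}_r-H_r|^2\ud r]\to0$ and \eqref{RegConditProbProp} one gets $\bE\big[\bE_\omega[\int_u^T|H^{(k)}_r-H_r|^2\ud r]\big]\to0$, so along a subsequence $\bE_\omega[\int_u^T|H^{(k)}_r-H_r|^2\ud r]\to0$ for $\bP$-a.e.\ $\omega$. Applying It\^o's isometry together with Doob's $L^2$-maximal inequality to the continuous martingales $s\mapsto\int_{[u,s]}(H^{(k)}_r-H_r)\ud W^u_r$ under $\bP$ and under $\bP_\omega$, and passing to a further subsequence, yields $\sup_{s\in[u,T]}|\int^\bP_{[u,s]}H^{(k)}_r\ud W^u_r-\int^\bP_{[u,s]}H_r\ud W^u_r|\to0$ $\bP$-a.s.\ (hence $\bP_\omega$-a.s.\ for $\bP$-a.e.\ $\omega$ by Lemma \ref{Lemma:AlmostSureSets}) and, simultaneously, $\sup_{s\in[u,T]}|\int^{\bP_\omega}_{[u,s]}H^{(k)}_r\ud W^u_r-\int^{\bP_\omega}_{[u,s]}H_r\ud W^u_r|\to0$ $\bP_\omega$-a.s.\ for $\bP$-a.e.\ $\omega$. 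Combining these two convergences with the identity for the simple approximants gives \eqref{StochIntRegCondProb} uniformly in $s\in[u,T]$, $\bP_\omega$-a.s., for $\bP$-a.e.\ $\omega$; undoing the localisation finishes the argument. The one delicate point, and the one I expect to require the most care to write rigorously, is exactly this interchange of ``$L^2(\bP)$-convergence'' with ``$L^2(\bP_\omega)$-convergence for $\bP$-a.e.\ $\omega$'': it only works along subsequences and after noting that all limiting operations (the subsequences, the stopping times $\tau_n$, and — if one argues pointwise in $s$ first and then invokes path continuity — a countable dense set of times) involve only countably many objects, so the total exceptional $\bP$-null set remains null.
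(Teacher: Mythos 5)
Your proposal is correct and follows essentially the same route as the paper's proof: localise to the square-integrable case, approximate by simple $\{\cF^{t,0}_s\}$-predictable processes for which the two integrals coincide pathwise, transfer the $L^2(\ud s\times\bP)$-convergence to $L^2(\ud s\times\bP_\omega)$-convergence for $\bP$-a.e.\ $\omega$ via the regular conditional probability identity and a subsequence, and conclude by It\^o's isometry under both measures. The only cosmetic difference is that you obtain uniform $\bP_\omega$-a.s.\ convergence via Doob's maximal inequality, whereas the paper shows for each fixed $s$ that the $L^2(\bP_\omega)$-norm of the difference vanishes (so the two integrals are modifications) and then invokes path continuity to upgrade to indistinguishability.
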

	\begin{proof}
		Notice that since the stochastic integrals in \eqref{StochIntRegCondProb} are continuous, it is sufficient to prove that they are a modifications. So we now want to prove that, for every $s\in[u,T]$, 
		\begin{equation}\label{StochIntRegCondProb1}
		\int^\bP_{[u,s]}H_r \ud W^u_r=\int^{\bP_\omega}_{[u,s]}H_r\ud W^u_r, \quad \text{$\bP_\omega${-a.s.,} for $\bP${-a.e.}\ $\omega\in\Omega$},
		\end{equation}
		By a standard localisation procedure, we can assume that
		\begin{equation}
		\bE\bigg[\int_u^s H^2_r \ud r\bigg]<\infty,
		\end{equation}
		and so it is sufficient to show that
		\begin{equation}\label{StochIntRegCondProb2}
		\bE_\omega\bigg[\bigg|\int^\bP_{[u,s]}H_r \ud W^u_r-\int^{\bP_\omega}_{[u,s]}H_r\ud W^u_r\bigg|^2 \bigg]=0, \quad \text{$\bP$\text{-a.e.}\ $\omega$, for all $s\in[u,T]$}.
		\end{equation} 
		Notice that in \eqref{StochIntRegCondProb2} since $|\int^\bP_{[u,s]}H_r \ud W^u_r|^2\in L^1(\Omega,\cF,\bP)$ then, by Lemma \ref{Lemma:ExpectRegConditProb}, we also have $|\int^\bP_{[u,s]}H_r \ud W^u_r|^2\in L^1(\Omega,\cF_\omega,\bP_\omega)$ for $\bP$-a.e.~$\omega$. 
		
		Let $(H^{(n)})_{n\in\bN}$ be a sequence of simple processes such that
		\begin{equation}\label{ApproxSimpleProcP}
		\lim_{n\to\infty} \bE\bigg[\int_u^s |H_r-H^{(n)}_r|^2\ud r\bigg]=0.
		\end{equation}
		Then, by Lemma \ref{Lemma:ExpectRegConditProb}, we also have that
		\begin{align}
		\lim_{n\to\infty}\bE\bigg[\bE_\omega\bigg[\int_u^s|H_r-H^{(n)}_r|^2\ud r\bigg]\bigg]&=\lim_{n\to\infty}\bE\bigg[\bE\bigg[\int_u^s|H_r-H^{(n)}_r|^2\ud r\bigg|\cF^t_u\bigg](\omega)\bigg]\\
		&=\lim_{n\to\infty}\bE\bigg[\int_u^s|H_r-H^{(n)}_r|^2\ud r\bigg]=0.\notag
		\end{align}
		Thus, up to a subsequence which we still denote by $(H^{(n)})_{n\in\bN}$, we have 
		\begin{equation}\label{ApproxSimpleProcPomega}
		\lim_{n\to\infty} \bE_\omega\bigg[\int_u^s |H_r-H^{(n)}_r|^2\ud r\bigg]=0, \quad \bP\text{-a.e. }\omega.
		\end{equation}
By Ito's isometry with respect to the regular conditional probability $\bP_\omega$ and \eqref{ApproxSimpleProcPomega}, we have that
		\begin{align}\label{eq:isom}
		\lim_{n\to\infty}\bE_\omega\bigg[\bigg|\int^{\bP_\omega}_{[u,s]}(H^{(n)}_r-H_r)\ud W^u_r\bigg|^2\bigg]&=\lim_{n\to\infty} \bE_\omega\bigg[\int_u^s |H^{(n)}_r-H_r|^2\ud r\bigg]=0, \quad \bP\text{-a.e. }\omega.
		\end{align}

Similarly, by tower property (with Lemma \ref{Lemma:ExpectRegConditProb}) and It\^o's isometry we have
		\begin{align*}
		&\lim_{n\to\infty}\bE\bigg[\bE_\omega\bigg[\bigg|\int^{\bP}_{[u,s]}(H^{(n)}_r-H_r)\ud W^u_r\bigg|^2\bigg]\bigg]=\lim_{n\to\infty}\bE\bigg[\bE\bigg[\bigg|\int^{\bP}_{[u,s]}(H^{(n)}_r-H_r)\ud W^u_r\bigg|^2\bigg|\cF^t_u\bigg](\omega)\bigg]\\
		&=\lim_{n\to\infty}\bE\bigg[\bigg|\int^{\bP}_{[u,s]}(H^{(n)}_r-H_r)\ud W^u_r\bigg|^2\bigg]=\lim_{n\to\infty} \bE\bigg[\int_u^s |H^{(n)}_r-H_r|^2\ud r\bigg]=0,\notag
		\end{align*}
		where in the final equality we have used \eqref{ApproxSimpleProcP}. Again, up to a new subsequence which we still denote by $(H^{(n)})_{n\in\bN}$, we have
		\begin{equation}\label{ApproxNewSubseq}
		\lim_{n\to\infty}\bE_\omega\bigg[\bigg|\int^{\bP}_{[u,s]}(H^{(n)}_r-H_r)\ud W^u_r\bigg|^2\bigg]=0, \quad \bP\text{-a.e. }\omega.
		\end{equation}
		
		Now notice that
		\begin{align*}
		\int^\bP_{[u,s]}H_r \ud W^u_r-\int^{\bP_\omega}_{[u,s]}H_r\ud W^u_r=&\int^\bP_{[u,s]}H^{(n)}_r \ud W^u_r-\int^{\bP_\omega}_{[u,s]}H^{(n)}_r\ud W^u_r\nonumber\\	
		&+\int^{\bP_\omega}_{[u,s]}(H^{(n)}_r-H_r)\ud W^u_r-\int^{\bP}_{[u,s]}(H^{(n)}_r-H_r)\ud W^u_r,
		\end{align*}
		where the first difference on the right-hand side is null as stochastic integrals for simple processes are defined pathwise. Hence, for $\bP$-a.e.~$\omega$,
		\begin{align}\label{AddSubtractH^n}
		\bE_\omega\bigg[\bigg|\int^\bP_{[u,s]}H_r \ud W^u_r-\int^{\bP_\omega}_{[u,s]}H_r\ud W^u_r\bigg|^2 \bigg]&\leq 2\bE_\omega\bigg[\bigg|\int^{\bP_\omega}_{[u,s]}(H^{(n)}_r-H_r)\ud W^u_r\bigg|^2\bigg]\\
		&\hspace{10pt}+2\bE_\omega\bigg[\bigg|\int^{\bP}_{[u,s]}(H^{(n)}_r-H_r)\ud W^u_r\bigg|^2\bigg]\notag
		\end{align}
		Thus, by letting $n\uparrow\infty$ in \eqref{AddSubtractH^n} and using \eqref{eq:isom} and \eqref{ApproxNewSubseq}, we obtain the desired result \eqref{StochIntRegCondProb2}.
	\end{proof}
	
	\bibliography{bibfile}{}

\begin{thebibliography}{10}

\bibitem{alvarez2000singular}
L.~H. Alvarez.
\newblock Singular stochastic control in the presence of a state-dependent
  yield structure.
\newblock {\em Stoch.\ Process.\ Appl.}, 86(2):323--343, 2000.

\bibitem{atar2006singular}
R.~Atar and A.~Budhiraja.
\newblock Singular control with state constraints on unbounded domain.
\newblock {\em Ann.\ Probab.}, 34(5):1864--1909, 2006.

\bibitem{baldi2017stochastic}
P.~Baldi.
\newblock {\em Stochastic calculus}.
\newblock Springer, 2017.

\bibitem{bayraktar2012stochastic}
E.~Bayraktar and M.~Sirbu.
\newblock Stochastic {P}erron's method and verification without smoothness
  using viscosity comparison: the linear case.
\newblock {\em Proc.\ Amer.\ Math.\ Soc.}, 140(10):3645--3654, 2012.

\bibitem{bayraktar2013stochastic}
E.~Bayraktar and M.~Sirbu.
\newblock Stochastic {P}erron's method for {H}amilton--{J}acobi--{B}ellman
  equations.
\newblock {\em SIAM J.\ Control Optim.}, 51(6):4274--4294, 2013.

\bibitem{bouchard2012weak}
B.~Bouchard and M.~Nutz.
\newblock Weak dynamic programming for generalized state constraints.
\newblock {\em SIAM J.\ Control Optim.}, 50(6):3344--3373, 2012.

\bibitem{bouchard2011weak}
B.~Bouchard and N.~Touzi.
\newblock Weak dynamic programming principle for viscosity solutions.
\newblock {\em SIAM J.\ Control Optim.}, 49(3):948--962, 2011.

\bibitem{ceci2004mixed}
C.~Ceci and B.~Bassan.
\newblock Mixed optimal stopping and stochastic control problems with
  semicontinuous final reward for diffusion processes.
\newblock {\em Stochastics Stoch.\ Reports}, 76(4):323--337, 2004.

\bibitem{chen2012problem}
X.~Chen and F.~Yi.
\newblock A problem of singular stochastic control with optimal stopping in
  finite horizon.
\newblock {\em SIAM J.\ Control Optim.}, 50(4):2151--2172, 2012.

\bibitem{chiarolla1997singular}
M.~B. Chiarolla.
\newblock Singular stochastic control of a singular diffusion process.
\newblock {\em Stochastics}, 62(1-2):31--63, 1997.

\bibitem{claisse2016pseudo}
J.~Claisse, D.~Talay, and X.~Tan.
\newblock A pseudo-{M}arkov property for controlled diffusion processes.
\newblock {\em SIAM J.\ Control Optim.}, 54(2):1017--1029, 2016.

\bibitem{crandall1992user}
M.~G. Crandall, H.~Ishii, and P.-L. Lions.
\newblock User's guide to viscosity solutions of second order partial
  differential equations.
\newblock {\em Bulletin {A}mer.\ Math.\ Soc.}, 27(1):1--67, 1992.

\bibitem{davis1994problem}
M.~H. Davis and M.~Zervos.
\newblock A problem of singular stochastic control with discretionary stopping.
\newblock {\em Ann.\ Appl.\ Probab.}, 4(1):226--240, 1994.

\bibitem{dellacherie1978probabilities}
C.~Dellacherie and P.~Meyer.
\newblock {\em Probabilities and potential, A}.
\newblock North-Holland Publishing Company, 1978.

\bibitem{doleans1976existence}
C.~Dol{\'e}ans-Dade.
\newblock On the existence and unicity of solutions of stochastic integral
  equations.
\newblock {\em Zeitschrift f{\"u}r Wahrscheinlichkeitstheorie und verwandte
  Gebiete}, 36(2):93--101, 1976.

\bibitem{dumitrescu2016weak}
R.~Dumitrescu, M.-C. Quenez, and A.~Sulem.
\newblock A weak dynamic programming principle for combined optimal
  stopping/stochastic control with $\mathcal{E}^f$-expectations.
\newblock {\em SIAM J.\ Control Optim.}, 54(4):2090--2115, 2016.

\bibitem{karoui2013capacities}
N.~El~Karoui and X.~Tan.
\newblock Capacities, measurable selection and dynamic programming part {I}:
  abstract framework.
\newblock {\em arXiv preprint arXiv:1310.3363}, 2013.

\bibitem{el2013capacities}
N.~El~Karoui and X.~Tan.
\newblock Capacities, measurable selection and dynamic programming part {II}:
  application in stochastic control problems.
\newblock {\em arXiv preprint arXiv:1310.3364}, 2013.

\bibitem{fabbri2017stochastic}
G.~Fabbri, F.~Gozzi, and A.~Swiech.
\newblock Stochastic optimal control in infinite dimension.
\newblock {\em Probability and Stochastic Modelling. Springer}, 2017.

\bibitem{fleming2006controlled}
W.~H. Fleming and H.~M. Soner.
\newblock {\em Controlled Markov processes and viscosity solutions}, volume~25.
\newblock Springer, 1988.

\bibitem{haussmann1995singularExistence}
U.~G. Haussmann and W.~Suo.
\newblock Singular optimal stochastic controls {I}: Existence.
\newblock {\em SIAM J.\ Control Optim.}, 33(3):916--936, 1995.

\bibitem{haussmann1995singularDPP}
U.~G. Haussmann and W.~Suo.
\newblock Singular optimal stochastic controls {II}: Dynamic programming.
\newblock {\em SIAM J.\ Control Optim.}, 33(3):937--959, 1995.

\bibitem{ikeda1989stochastic}
N.~Ikeda and S.~Watanabe.
\newblock {\em Stochastic differential equations and diffusion processes}.
\newblock North-Holland, 1989.

\bibitem{jeanblanc1995optimization}
M.~Jeanblanc-Picqu{\'e} and A.~N. Shiryaev.
\newblock Optimization of the flow of dividends.
\newblock {\em Uspekhi Mat. Nauk}, 50(2):25--46, 1995.

\bibitem{kallenberg2006foundations}
O.~Kallenberg.
\newblock {\em Foundations of modern probability}.
\newblock Springer, 1997.

\bibitem{karatzas1981monotone}
I.~Karatzas.
\newblock The monotone follower problem in stochastic decision theory.
\newblock {\em Appl.\ Math.\ Optim.}, 7(1):175--189, 1981.

\bibitem{karatzas1985probabilistic}
I.~Karatzas.
\newblock Probabilistic aspects of finite-fuel stochastic control.
\newblock {\em Proc. Nat. Acad. Sci.}, 82(17):5579--5581, 1985.

\bibitem{karatzas2000finite}
I.~Karatzas, D.~Ocone, H.~Wang, and M.~Zervos.
\newblock Finite-fuel singular control with discretionary stopping.
\newblock {\em Stochastics}, 71(1-2):1--50, 2000.

\bibitem{karatzas2006martingale}
I.~Karatzas and I.-M. Zamfirescu.
\newblock Martingale approach to stochastic control with discretionary
  stopping.
\newblock {\em Appl.\ Math.\ Optim.}, 53(2):163--184, 2006.

\bibitem{krylov2008controlled}
N.~V. Krylov.
\newblock {\em Controlled diffusion processes}.
\newblock Springer, 1980.

\bibitem{kurtz2007yamada}
T.~Kurtz.
\newblock The yamada-watanabe-engelbert theorem for general stochastic
  equations and inequalities.
\newblock {\em Electronic Journal of Probability}, 12:951--965, 2007.

\bibitem{ma1993regular}
J.~Ma and J.~Yong.
\newblock Regular-singular stochastic controls for higher dimensional
  diffusions -- dynamic programming approach.
\newblock {\em IMA Preprint Series 1170}, 1993.

\bibitem{ma1999dynamic}
J.~Ma and J.~Yong.
\newblock Dynamic programming for multidimensional stochastic control problems.
\newblock {\em Acta Math.\ Sin.}, 15(4):485--506, 1999.

\bibitem{milazzo2021}
A.~Milazzo.
\newblock {\em Various topics in stochastic control and measure theory:
  singular stochastic control, optimal stopping and decompositions of
  measures}.
\newblock PhD thesis, Spiral, Imperial College London's repository,
  https://doi.org/10.25560/92212, 2021.

\bibitem{morimoto2010singular}
H.~Morimoto.
\newblock A singular control problem with discretionary stopping for geometric
  {B}rownian motions.
\newblock {\em SIAM J.\ Control Optim.}, 48(6):3781--3804, 2010.

\bibitem{morimoto2010stochastic}
H.~Morimoto.
\newblock {\em Stochastic control and mathematical modeling}.
\newblock Cambridge University Press, 2010.

\bibitem{parthasarathy2005probability}
K.~R. Parthasarathy.
\newblock {\em Probability measures on metric spaces}.
\newblock American Mathematical Soc., 1967.

\bibitem{revuz2013continuous}
D.~Revuz and M.~Yor.
\newblock {\em Continuous martingales and Brownian motion}.
\newblock Springer, 1991.

\bibitem{rudin}
W.~Rudin.
\newblock {\em Real and complex analysis, 3rd Ed.}
\newblock McGraw-Hill, Inc., USA, 1987.

\bibitem{sirbu2014stochastic}
M.~Sirbu.
\newblock Stochastic {P}erron's method and elementary strategies for zero-sum
  differential games.
\newblock {\em SIAM J.\ Control Optim.}, 52(3):1693--1711, 2014.

\bibitem{sirbu2015asymptotic}
M.~Sirbu.
\newblock Asymptotic {P}erron's method and simple {M}arkov strategies in
  stochastic games and control.
\newblock {\em SIAM J.\ Control Optim.}, 53(4):1713--1733, 2015.

\bibitem{soner2002dynamic}
H.~M. Soner and N.~Touzi.
\newblock Dynamic programming for stochastic target problems and geometric
  flows.
\newblock {\em J. Eur. Math. Soc.}, 4(3):201--236, 2002.

\bibitem{soner2011quasi}
M.~Soner, N.~Touzi, and J.~Zhang.
\newblock Quasi-sure stochastic analysis through aggregation.
\newblock {\em Electronic Journal of Probability}, 16:1844--1879, 2011.

\bibitem{yong1999stochastic}
J.~Yong and X.~Y. Zhou.
\newblock {\em Stochastic controls: Hamiltonian systems and HJB equations}.
\newblock Springer, 1999.

\bibitem{zhu1992generalized}
H.~Zhu.
\newblock Generalized solution in singular stochastic control: the
  nondegenerate problem.
\newblock {\em Appl.\ Math.\ Optim.}, 25(3):225--245, 1992.

\bibitem{zitkovic2014dynamic}
G.~Zitkovic.
\newblock Dynamic programming for controlled {M}arkov families: abstractly and
  over martingale measures.
\newblock {\em SIAM J. Control Optim.}, 52(3):1597--1621, 2014.

\end{thebibliography}
	\bibliographystyle{abbrv} 
	
\end{document}